\newtheorem{prop}{Proposition}[section]
\newtheorem{lem}[prop]{Lemma}
\newtheorem{thm}[prop]{Theorem}
\newtheorem{conj}[prop]{Conjecture}
\theoremstyle{definition}
\newtheorem{rem}[prop]{Remark}
\newtheorem{defi}[prop]{Definition}
\newtheorem{ex}[prop]{Example}
\def\Z{\mathbb{Z}}
\numberwithin{equation}{section}
\def\Z{\mathbb{Z}}
\def\N{\mathbb{N}}
\def\G{\Gamma}
\def\H{\mathrm{H}}
\def\N{\mathrm{N}}
\def\Cay{\mathsf{Cay}}
\def\E{\mathcal{E}}
\def\D{\mathcal{D}}
\def\C{\mathcal{C}}
\begin{document}

\title{Tight globally simple non-zero sum Heffter arrays and biembeddings}

\author[L. Mella]{Lorenzo Mella}
\address{Dip. di Scienze Fisiche, Informatiche, Matematiche, Universit\`a degli Studi di Modena e Reggio Emilia, Via Campi 213/A, I-41125 Modena, Italy}
\email{lorenzo.mella@unipr.it}

\author[A. Pasotti]{Anita Pasotti}
\address{DICATAM - Sez. Matematica, Universit\`a degli Studi di Brescia, Via
Branze 43, I-25123 Brescia, Italy}
\email{anita.pasotti@unibs.it}

\begin{abstract}
Square relative non-zero sum Heffter arrays, denoted by $\N\H_t(n;k)$, have been introduced as a variant of the classical concept of Heffter array.
An $\N\H_t(n; k)$ is an $n\times n$ partially filled  array  with elements in $\Z_v$, where
$v=2nk+t$, whose rows and whose columns contain $k$ filled cells,
such that the sum of the elements in every row and column is different from $0$
(modulo $v$) and, for every $x\in \Z_v$ not belonging to the subgroup of
order $t$, either $x$ or $-x$ appears in the array.
In this paper we give direct constructions of square non-zero sum Heffter arrays with no empty cells, $\N\H_t(n;n)$, for every $n$ odd, when 
$t$ is a divisor of $n$ and when $t\in\{2,2n,n^2,2n^2\}$.
The constructed arrays have  also the very restrictive property of being ``globally simple'';
this allows us to get  new orthogonal path decompositions and new biembeddings of complete multipartite graphs.
\end{abstract}

\keywords{Heffter array, orthogonal cyclic path decomposition,  complete multipartite graph, biembedding}
\subjclass[2010]{05B20; 05C38}

\maketitle

\section{Introduction}
An $m\times n$ partially filled (p.f., for short) array on a set $\Omega$ is an $m\times n$ matrix whose
elements belong to $\Omega$ and where some cells can be empty.
In \cite{A}, Archdeacon introduced a class of p.f. arrays on a cyclic group, called \emph{Heffter arrays}, which have been extensively studied since they have a vast variety of applications and connections with other well known problems and concepts. Such arrays have been generalized as follows, see \cite{ CPEJC}.

\begin{defi}\label{def:lambdaRelative}
Let $v=\frac{2nk}{\lambda}+t$ be a positive integer,
where $t$ divides $\frac{2nk}{\lambda}$,  and
let $J$ be the subgroup of $\Z_{v}$ of order $t$.
 A  $\lambda$-\emph{fold Heffter array $A$ over $\Z_{v}$ relative to $J$}, denoted by $^\lambda\H_t(m,n; h,k)$, is an $m\times n$ p.f.  array
 with elements in $\Z_{v}$ such that:
\begin{itemize}
\item[($\rm{a})$] each row contains $h$ filled cells and each column contains $k$ filled cells;
\item[($\rm{b})$] the multiset $\{\pm x \mid x \in A\}$ contains  each element of $\Z_v\setminus J$ exactly $\lambda$ times;
\item[($\rm{c})$] the elements in every row and column sum to $0$ (in $\Z_v$).
\end{itemize}
\end{defi}

In the following, by $t$ admissible we mean that $t$ divides $\frac{2nk}{\lambda}$.
If $t=\lambda =1$ one finds again the concept introduced in \cite{A}.
Also if $t=1$ or $\lambda=1$, then it is omitted, for example a $^1 \H_1(m,n;h,k)$ is simply denoted by $\H(m,n;h,k)$.
Note that $nk=mh$, so if the array is square then $h=k$ and by $^\lambda \H_t(n;k)$ one means a $^\lambda \H_t(n,n;k,k)$.
The most important result on Heffter arrays with $\lambda=t=1$ is the following one, see \cite{ADDY,CDDY,DW}.
\begin{thm}
For every $n\geq k\geq3$ there exists an $\H(n;k)$.
\end{thm}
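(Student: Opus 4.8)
The plan is to prove that $\H(n;k)$ exists for every $n\ge k\ge 3$ by producing explicit arrays, organising the argument according to the residue of $k$ modulo $4$ and according to whether the array is \emph{tight} ($n=k$) or has empty cells ($n>k$). In each case the first move is to fix the \emph{support} of the array --- the set of filled cells --- as a union of $k$ broken diagonals
\[
\q_d=\{(i,j)\in\Z_n\times\Z_n : j-i\equiv d\pmod n\},\qquad d\in D,\quad D\subseteq\Z_n,\quad |D|=k .
\]
Since each $\q_d$ meets every row and every column in exactly one cell, this makes condition (a) of Definition~\ref{def:lambdaRelative} automatic; what remains is to place values so that (b) the multiset of absolute values is exactly $\{1,2,\dots,nk\}$ and (c) every row sum and every column sum vanishes in $\Z_v$, $v=2nk+1$.

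For the filling I would partition $\{1,\dots,nk\}$ into $k$ blocks of $n$ consecutive integers, attach the $d$-th block to the diagonal $\q_d$, and let the entry of $\q_d$ in row $i$ run over that block as $i$ ranges through $\Z_n$, choosing the sign of each entry and a cyclic shift of each block along its diagonal so that the $k$ entries lying in any common row can be partitioned into subsets of zero sum, and likewise for columns. Here two regimes appear. For $k\equiv 0$ or $3\pmod 4$ one can arrange \emph{literal} integer cancellation --- grouping the diagonals into small bricks whose signed block sums already vanish over $\Z$ --- and these are historically the easier cases. For $k\equiv 1$ or $2\pmod 4$ such integer cancellation is obstructed by an arithmetic count on $\{1,\dots,nk\}$, so one must genuinely use that (c) only requires vanishing \emph{modulo} $v$, designing constructions in which some row and column sums equal $\pm v$ (or a small multiple of $v$) instead of $0$. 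Throughout, I would force the column condition to hold in lockstep with the row condition by taking $D$ symmetric under $d\mapsto-d$ and the sign/shift pattern (anti)symmetric under transposition, so that the column sums become, up to a global sign, copies of the row sums.

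For the non-tight range $n>k$ there are $n-k$ empty cells in every line, and hence extra slack: one can run the same diagonal template with a more freely chosen $D\subseteq\Z_n$, or first build a small seed --- essentially an $\H(k;k)$ or a mild enlargement of it --- and inflate it to size $n$ by adjoining full diagonals that carry blocks of consecutive integers in a reflection-symmetric, self-cancelling order; recursive ``doubling'' of smaller arrays is a further possibility. At each such step one must recheck that (b) is preserved exactly, the newly inserted values filling precisely the leftover block $\{mk+1,\dots,nk\}$, and that the seed's sums are left undisturbed.

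The main obstacle will be the tight case $n=k$: here the clean pairing and bricking templates degenerate, and when $k$ is odd an arithmetic count on $\{1,\dots,k^2\}$ rules out integer vanishing even for $k\equiv 3\pmod 4$, so the modular slack of (c) is essential in more residue classes than for $n>k$; moreover a finite list of small sporadic pairs $(n,k)$ will likely resist every uniform construction and has to be settled ad hoc or by short computer search. Even outside those, the real technical content is the simultaneous bookkeeping --- arranging the signed block sums so that they telescope to $0$ in $\Z_v$ along \emph{both} rows and columns while the absolute values still tile $\{1,\dots,nk\}$ with no repetition --- and this is exactly where the constructions of \cite{ADDY,CDDY,DW} do the heavy lifting; I expect the proof to reduce, case by case, to reproducing theirs.
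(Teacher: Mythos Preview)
The paper does not prove this theorem at all: it is stated as a known background result, attributed to \cite{ADDY,CDDY,DW}, and no argument is given. So there is no ``paper's own proof'' to compare your proposal against.

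Your sketch is a reasonable high-level summary of the strategy those three papers actually use --- diagonal supports, blocks of consecutive integers on each diagonal, case analysis on $k\bmod 4$, integer versus modular vanishing, and a handful of sporadics --- and you yourself acknowledge in the last paragraph that the real work would amount to reproducing \cite{ADDY,CDDY,DW}. That is accurate, but it also means what you have written is not a proof but a roadmap toward one: none of the concrete sign/shift choices, none of the verifications that row and column sums vanish simultaneously, and none of the sporadic cases are actually carried out. For the purposes of this paper that is fine, since the theorem is only being quoted; but if you intend this as a standalone proof, the honest statement is that every substantive step is deferred to the cited references.
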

We have to emphasize that Heffter arrays are not only combinatorial objects interesting \emph{per se}, in fact
there are several papers in which they are investigated to get face $2$-colorable embeddings, namely biembeddings, see \cite{A, CDY,  CMPPHeffter,  CPPBiembeddings, CPJCTA, CPEJC, DM}.

In a very recent paper, see \cite{CDFP}, Costa, Della Fiore and Pasotti have introduced
a variant of Heffter arrays, that they called \emph{non-zero sum Heffter arrays}
since they require that all the properties of Heffter arrays are satisfied except that on row and column sums which now have to be all different from zero.

\begin{defi}\label{def:NZS}
Let $v=\frac{2nk}{\lambda}+t$ be a positive integer,
where $t$ divides $\frac{2nk}{\lambda}$,  and
let $J$ be the subgroup of $\Z_{v}$ of order $t$.
 A   $\lambda$-\emph{fold non-zero sum Heffter array $A$ over $\Z_{v}$ relative to $J$}, denoted by $^\lambda \N\H_t(m,n; h,k)$, is an $m\times n$ p.f.  array
 with elements in $\Z_{v}$ such that:
\begin{itemize}
\item[($\rm{a_1})$] each row contains $h$ filled cells and each column contains $k$ filled cells;
\item[($\rm{b_1})$] the multiset $\{\pm x \mid x \in A\}$ contains each element of $\Z_v\setminus J$ exactly $\lambda$ times;
\item[($\rm{c_1})$] the sum of the elements in every row and column is different from $0$ (in $\Z_v$).
\end{itemize}
\end{defi}

Also this variant is related to several other topics, in fact in
\cite{CDFP} the authors explain its connection to a famous conjecture by Alspach, to orthogonal path decompositions
and to biembeddings. Furthermore, they attack the existence problem
and, using a probabilistic approach, they provide a complete (non constructive) solution for an $\N\H_t(m,n;h,k)$ for any admissible value of the parameters.
Moreover, they construct square  non-zero sum Heffter arrays with empty cells and rectangular ones with no empty cells that satisfy the very restrictive property of being ``globally simple''  when $\lambda=t=1$.
Other results on  relative non-zero sum Heffter arrays  can be found in \cite{C, M}.

Also in this paper we focus on non-zero sum Heffter arrays having this additional property of being ``globally simple'',
here we take $\lambda=1$, $t>1$ and we suppose that the array is \emph{tight}, that is with no empty cells.
In fact, after having introduced some basic definitions in Section 2, in Section 3 we consider totally filled square non-zero sum Heffter arrays.
In particular we present several classes of
globally simple $\N\H_t(n;n)$ for $n$ odd which lead to a complete solution to the existence problem for every $n$ prime
and admissible $t$, and for every odd $n$ and   $t$ divisor of $n$.
Finally in Sections 4 and 5
 we explain how relative non-zero sum Heffter arrays can be used to construct cyclic path decompositions
of the complete multipartite graph and
biembeddings of such decompositions into an orientable surface, respectively. As a consequence, thanks to the existence results described in Section 3, we provide new classes of such decompositions and biembeddings.

\section{Simple orderings}
Let $a,b$ be two integers and suppose $a\leq b$, by $[a,b]$ we denote the set $\{a,a+1,a+2,\ldots,b\}$.
Also, given a set $S = \{a_1,a_2, \dotsc, a_k\}$ and an integer $x$, by $S+x$ we mean $\{a_1+x,a_2+x, \dotsc, a_k+x\}$.

Given a finite subset $T$ of an abelian group $G$ with $\sum_{t \in T}t\neq 0$
and an ordering $\omega=(t_1,t_2,\ldots,t_k)$
of the elements of $T$, let $s_i=\sum_{j=1}^i t_j$, for any $i \in [1,k]$, be the $i$-th partial sum
of $\omega$ and let $S(\omega)=\{s_1,\ldots,s_k\}$ be the unordered list of the partial sums
of $T$ computed with respect to $\omega$. The ordering $\omega$ is \emph{simple} if
all the partial sums are non-zero and pairwise distinct or, in other words, if there is no proper subsequence of $\omega$
that sums to $0$. Note that if $\omega$ is a simple ordering so is $\omega^{-1}=(t_k,t_{k-1},\ldots,t_1)$.

If $A$ is an $m \times n$ p.f. array, its rows and columns are denoted by $R_1,\ldots, R_m$ and by $C_1,\ldots, C_n$,
respectively. By $\E(A)$ we mean the unordered list of the elements of the filled cells of $A$.
Analogously, by
$\E(R_i)$ and $\E(C_j)$ we denote the unordered lists of elements of the $i$-th row and of the $j$-th column,
respectively, of $A$.
By $\omega_{R_i}$ and $\omega_{C_j}$ we mean a simple ordering of the $i$-th row and of the $j$-th column, respectively.
If $A$ is a non-zero sum Heffter array, by a \emph{natural ordering}
of a row (column) of $A$ we mean the ordering from left to right (from top to bottom)
starting from a given element of such a row (column).
Also,  $A$  is said
\begin{itemize}
\item \emph{simple} if each row and each column of $A$ admits a simple ordering;
\item \emph{globally simple} if  each row and each column of $A$ admits a simple natural ordering.
\end{itemize}

\begin{rem}\label{rem:1}
Note that there is an important difference in the definition of the property of being globally simple
for Heffter arrays (see \cite{BCDY, CMPPHeffter}) and
for non-zero sum  Heffter arrays. In fact for the first ones each row and each column has exactly one natural ordering, while
if we have a non-zero sum Heffter array the number of natural orderings of a row (column) is equal to the number of its filled cells, since it depends on the starting element. Suppose for instance that $R=(1,10,4,-11)$ is a row of an $\N\H(4;4)$.
If we compute its partial sums with respect to the natural ordering starting from $1$ we get $S(R)=(1,11,15,4)$, and hence the ordering is simple,
while if we compute its partial sums with respect to the natural ordering starting from $4$ we get $S(R)=(4,-7,-6,4)$
and hence the ordering is not simple.
So it is clear that when we compute the partial sums of a row or of a column we have always to indicate the starting element.
In the following, given an array $A=(a_{i,j})$, with a little abuse of notation, by $S(R_i)$ ($S(C_j)$) we will denote the unordered list of the partial sums
of the row $R_i$ (the column $C_j$) with respect to the natural ordering calculated starting from the element $a_{i,1}$ ($a_{1,j}$).
\end{rem}

\begin{rem}\label{rem:2}
If a row or a column admits a simple natural ordering, then all its partial sums are different from zero and hence,
in particular, also the sum of all its elements is different from $0$.
Hence in our constructions of globally simple non-zero sum Heffter arrays first we prove only that conditions ($\rm{a_1})$ and ($\rm{b_1})$ of Definition \ref{def:NZS} are satisfied. Then
we show that each row and each column admits a simple natural ordering, obtaining as a consequence that also condition
($\rm{c_1})$ is satisfied.
\end{rem}

Now we can explain the connection between non-zero sum Heffter arrays and the following conjecture proposed by Alspach.
\begin{conj}\label{conj}
Let $T\in \Z_v\setminus \{0\}$ with $\sum_{t \in T}t\neq 0$, then there exists a simple ordering of $T$.
\end{conj}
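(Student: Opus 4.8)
The plan is to split the argument according to whether $v$ is prime, since the two cases call for very different tools. I would begin with a convenient reformulation: adjoining the virtual initial sum $s_0=0$, an ordering $\omega=(t_1,\dots,t_k)$ of $T$ is simple precisely when $s_0,s_1,\dots,s_k$ are $k+1$ pairwise distinct elements of $\Z_v$; equivalently, the set $P=\{0,s_1,\dots,s_k\}$ has size $k+1$ and carries a Hamiltonian ordering starting at $0$ whose consecutive differences are exactly the elements of $T$. Thus a simple ordering is the same datum as a suitable $(k+1)$-subset of $\Z_v$ together with a prescribed difference labelling, and one already knows the endpoint $s_k=\sum_{t\in T}t$ is forced and, by hypothesis, nonzero; the work is to separate the intermediate sums from $0$ and from one another.

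For $v=p$ prime I would use the polynomial method. Writing $T=\{a_1,\dots,a_k\}$ and $\sigma_i=x_1+\cdots+x_i$, one seeks a point $(x_1,\dots,x_k)\in T^k$ at which
\[
\Phi(x_1,\dots,x_k)\;=\;\prod_{1\le i<j\le k}(x_i-x_j)\ \cdot\ \prod_{i=1}^{k}\sigma_i\ \cdot\ \prod_{1\le i<j\le k}(\sigma_i-\sigma_j)
\]
does not vanish over $\Z_p$: the first factor forces $(x_1,\dots,x_k)$ to range over the orderings of $T$, and the remaining factors encode exactly the simplicity constraints. A crude application of the Combinatorial Nullstellensatz over the $k$-element set $T$ fails because $\deg\Phi$ is quadratic in $k$, so I would instead proceed incrementally — choosing a good first element, deleting it, and analysing the residual system of linear forms modulo $p$ by a Chevalley--Warning / Nullstellensatz count — along the lines of what is known to work for the related distinct partial sums problems in groups of prime order.

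For composite $v$ I expect the genuine obstruction. The natural attack is induction on the number of prime divisors: choose a prime $q\mid v$, let $H\le\Z_v$ be the subgroup of index $q$, and group the elements of $T$ by their cosets modulo $H$, producing a multiset $\overline{T}$ in $\Z_v/H\cong\Z_q$. One would first obtain a simple ordering of $\overline{T}$ — a multiset version of the conjecture, itself delicate when values repeat or vanish — and then, block by block over each fibre, exploit the freedom inside $H$ to perturb the entries so that partial sums agreeing modulo $H$ are pulled apart in $\Z_v$ while the within-$H$ contributions remain controlled. Forcing all $\binom{k+1}{2}$ coincidences $s_i=s_j$ to fail simultaneously in $\Z_v$ (not merely their projections), with the multiset of entries held fixed, is exactly the hard part: the perturbations interact and small fibres leave almost no room to manoeuvre. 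To shore this up I would try to combine (i) the extremal range in which $T$ is all but a bounded number of elements of $\Z_v$, where $P$ can be taken to be nearly all of $\Z_v$ and the few missing difference labels are corrected locally, and (ii) an insertion step: from a simple ordering of $T\setminus\{a\}$ (or of the set obtained by merging $a$ into a neighbour), splice $a$ back in. The difficulty there is that only boundedly many insertion slots are safe, so one needs a sufficiently rich supply of simple orderings of the smaller set to guarantee that a safe slot exists — which circles back to needing a robust inductive hypothesis rather than bare existence.
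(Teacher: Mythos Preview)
The statement you are addressing is a \emph{conjecture}, not a theorem: the paper does not prove it, but merely records it as Alspach's conjecture and points to partial results in the literature. There is therefore no ``paper's own proof'' to compare against, and any complete proof would resolve a well-known open problem.

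Your proposal is an honest strategy sketch, but it is not a proof, and you say so yourself. In the prime case, you note that a direct Nullstellensatz application fails on degree grounds and gesture toward an incremental variant ``along the lines of what is known to work''; that is not an argument but an appeal to analogy, and the analogous distinct-partial-sums results in $\Z_p$ are themselves only partially settled. In the composite case you correctly identify the real obstruction --- lifting a simple ordering of the quotient multiset $\overline{T}$ to one of $T$ while simultaneously separating all $\binom{k+1}{2}$ pairs in $\Z_v$ --- and then concede that the perturbations interact, small fibres leave no room, and the insertion step requires not merely existence but a \emph{robust} supply of orderings of smaller sets. That is exactly the gap: your induction needs a strictly stronger hypothesis than the conjecture itself provides, and you have not formulated one. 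As written, the outline does not close, and given the status of the problem it would be remarkable if it did.
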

Clearly, if it holds, then every non-zero sum Heffter array is simple.
Several partial results on Conjecture \ref{conj} have been obtained, see \cite{CDFOR} and the references therein.

\section{Constructions of globally simple $\N\H_t(n;n)$, $n$ odd}\label{sec:constructions}
In this section we consider tight square relative non-zero sum Heffter arrays, namely $\N\H_t(n;n)$;
in details for every odd $n\geq1$ we present direct constructions of a globally simple $\N\H_t(n;n)$
for $t \in\{2,2n,n^2,2n^2\}$ and whenever $t$ divides $n$.
In particular, this will allow us to give a complete solution
to the existence problem of a globally simple $\N\H_t(n;n)$ for every prime $n$ and every admissible $t$.

To present these constructions we have to introduce the concept of \emph{support} of an array, of a row and of a column.
Given a relative non-zero sum Heffter array $\N\H_t(n;n)$, say $A$, and viewed its elements as integers in
$\pm \left\{1,\ldots, \left\lfloor \frac{2n^2+t}{2}\right\rfloor\right\}$ the \emph{support} of $A$, denoted by \emph{supp}($A$),
is defined to be the set of absolute values of the elements contained in $A$. Analogously one can define
the support of a row and of a column of $A$. Note that if $A$ is an $\N\H_t(n;n)$, then
its elements belong to $\Z_{2n^2+t}$ and we have to avoid the elements of the subgroup of $\Z_{2n^2+t}$
of order $t$, denoted by $\frac{2n^2+t}{t}\Z_{2n^2+t}$.

The following lemma will play a fundamental role in showing that the arrays we are going to construct are globally simple.
Its proof is trivial and hence it is left to the reader.

\begin{lem}\label{lemma:main}
Let $a,\, b$ and $g$ be elements in an abelian group $G$, and let $\ell$ be a positive integer. Consider the orderings $\omega = (a,b,a+g,b-g, a+2g, b-2g, \dotsc,b - (\ell-1)g, a+\ell g)$ and $\nu = (a,b,a+g,b-g, a+2g, b-2g, \dotsc,a+\ell g, b-\ell g)$. Then:
\begin{enumerate}
    \item the unordered list of the partial sums of the ordering $ \omega $ is
\[
S(\omega)=\{ k(a+b): 1\leq k \leq \ell \} \cup \{a + k(a+b+g): 0\leq k \leq \ell \};
\]
\item the unordered list of the partial sums of the ordering  $\omega^{-1}$ is
\[
S(\omega^{-1})=\{ k(a+b+g): 1\leq k \leq \ell \} \cup \{a+\ell g + k(a+b): 0\leq k \leq \ell\};
\]
\item the unordered list of the partial sums of the ordering $ \nu $ is
\[
S(\nu)=\{ k(a+b): 1\leq k \leq \ell+1 \} \cup \{a + k(a+b+g): 0\leq k \leq \ell \};
\]
\item the unordered list of the partial sums of the ordering  $\nu^{-1}$ is
\[
S(\nu^{-1})=\{ k(a+b): 1\leq k \leq \ell+1\} \cup \{b-\ell g + k(a+b+g): 0\leq k \leq \ell \}.
\]
\end{enumerate}
\end{lem}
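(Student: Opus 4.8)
The plan is to verify each of the four partial-sum lists by a direct computation, organising the bookkeeping around the parity of the position in the ordering and then treating the reversed orderings via a single elementary identity.

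First I would handle $\omega=(t_1,\dots,t_{2\ell+1})$. The entries in odd position are $t_{2j+1}=a+jg$ for $0\le j\le\ell$, and those in even position are $t_{2j}=b-(j-1)g$ for $1\le j\le\ell$. Hence after the first $2k$ terms the $a$-part contributes $ka+g\sum_{j=0}^{k-1}j=ka+g\binom{k}{2}$ and the $b$-part contributes $kb-g\binom{k}{2}$; the $g$-terms cancel, so the partial sum at an even position is $s_{2k}=k(a+b)$ for $1\le k\le\ell$. Adding the next entry $a+kg$ gives $s_{2k+1}=a+k(a+b+g)$ for $0\le k\le\ell$ (this includes $s_1=a$). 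Collecting the two families yields exactly item (1). The ordering $\nu$ is $\omega$ with the extra entry $b-\ell g$ appended, so it contributes one more even-position sum $s_{2\ell+2}=(\ell+1)(a+b)$; this gives item (3).

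For the reversed orderings I would use the identity that if $\sigma$ has total sum $\Sigma$ and partial sums $s_0=0,s_1,\dots,s_N=\Sigma$, then the partial sums of $\sigma^{-1}$ form the set $\{\Sigma-s_j:0\le j\le N-1\}$. Applying this to $\omega$ with $\Sigma=s_{2\ell+1}=a+\ell(a+b+g)$, the even indices $j=2k$ ($0\le k\le\ell$) give $\Sigma-s_{2k}=a+\ell g+(\ell-k)(a+b)$, and the odd indices $j=2k+1$ ($0\le k\le\ell-1$) give $\Sigma-s_{2k+1}=(\ell-k)(a+b+g)$; reindexing $k\mapsto\ell-k$ turns these into the two families of item (2). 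The computation for $\nu^{-1}$ is identical with $\Sigma=(\ell+1)(a+b)$: the even indices reproduce $\{k(a+b):1\le k\le\ell+1\}$, and the odd indices, after the substitution $k\mapsto\ell-k$, become $\{b-\ell g+k(a+b+g):0\le k\le\ell\}$, which is item (4).

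There is no real obstacle here: the whole argument is a finite arithmetic-progression bookkeeping exercise, so the statement is indeed trivial. The only point where one must be slightly careful is the index ranges — in particular that the partial sums of the reverse of a length-$N$ ordering run over $\Sigma-s_j$ for $j$ from $0$ to $N-1$ (not $N$), so that the trivial value $\Sigma-s_N=0$ is correctly excluded and the endpoint terms land in the stated ranges; this is precisely why, for instance, item (2) has $k$ starting from $1$ in the first family but from $0$ in the second.
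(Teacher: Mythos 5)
Your proposal is correct: the parity bookkeeping for $\omega$ and $\nu$ and the reversal identity $S(\sigma^{-1})=\{\Sigma-s_j : 0\le j\le N-1\}$ together verify all four lists, with the index ranges handled properly. The paper itself omits the proof as trivial, and your computation is exactly the kind of direct argument intended, the reversal identity being a mild shortcut over recomputing the partial sums of $\omega^{-1}$ and $\nu^{-1}$ from scratch.
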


We point out that all the following proofs are constructive, in fact we give direct constructions
of a globally simple $\N\H_{t}(n;n)$, for suitable values of $n$ and $t$, and we show that all the required properties are satisfied, keeping in mind Remarks
\ref{rem:1} and \ref{rem:2}.

\subsection{Globally simple $\N\H_{2}(n;n)$}
The arrays described in the following proposition can be found in \cite[Theorem 5.5]{CDFP}, where a construction of a globally simple $\N\H (m,n;n,m)$ for every $m,n \geq 1$ is given. Here, we show that for every integer $n = m$ the same array is also a globally simple $\N\H_2(n;n)$.
We underline that in this construction we do not require $n$ odd.
\begin{prop}\label{prop:2}
For every integer $n\geq 1$, there exists a globally simple $\N\H_2(n;n)$.
\end{prop}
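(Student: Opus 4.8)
The plan is to recycle the rectangular construction of \cite[Theorem 5.5]{CDFP} in the square case $m=n$ and verify that, over the ring $\Z_{2n^2+2}$ rather than $\Z_{2n^2}$, it still meets the axioms of an $\N\H_2(n;n)$. Concretely, I would first write down the explicit $n\times n$ array $A=(a_{i,j})$ from that construction, whose entries are (up to sign) the integers in $[1,n^2]$, each absolute value occurring exactly once; the cell $(i,j)$ carries a value determined by a simple closed formula in $i,j,n$ (typically of the form $\pm\bigl(n(i-1)+\sigma(i,j)\bigr)$ for a row-dependent bijection $\sigma$). The point to check for condition $(\mathrm{b_1})$ with $t=2$ is that $v=2n^2+2$, the subgroup $J$ of order $2$ is $\{0,\,n^2+1\}$, and $\Z_v\setminus J$ consists of $\pm x$ for $x\in[1,n^2]\setminus\{n^2+1\}=[1,n^2]$ — so the multiset $\{\pm x:x\in A\}=\{\pm1,\dots,\pm n^2\}$ covers $\Z_v\setminus J$ exactly once. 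This is the only genuinely new verification relative to \cite{CDFP}: nothing about the combinatorial pattern changes, only the ambient modulus and the fact that $n^2+1\in\Z_v$ is now the forbidden element, and it never appears in $A$ since $|a_{i,j}|\le n^2$ and the entries of absolute value $n^2+1$ simply do not arise. Condition $(\mathrm{a_1})$ is immediate: every row and every column has exactly $n$ filled cells because the array is tight.

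Next I would address the global simplicity. Here the key observation is that a simple natural ordering of a row or column of $A$, established in \cite{CDFP} over $\Z_{2n^2}$, stays simple over $\Z_{2n^2+2}$ provided none of its partial sums lands in the new forbidden set; but in fact the relevant statement is sharper. By Remark~\ref{rem:2} it suffices to produce, for each row $R_i$ and each column $C_j$, a starting cell relative to which the natural ordering is simple, i.e. all partial sums distinct and non-zero in $\Z_v$. I would invoke Lemma~\ref{lemma:main}: the rows and columns of the \cite{CDFP} array are, after choosing the appropriate starting element, exactly of the zig-zag shape $\omega$ or $\nu$ there, with parameters $a,b,g$ read off from the construction (so that $a+b$ and $a+b+g$ are the two ``step'' constants governing the two arithmetic progressions of partial sums). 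Lemma~\ref{lemma:main} then gives $S(\omega)$, $S(\nu)$ in closed form as a union of two short arithmetic progressions, and simplicity reduces to checking that these progressions (a) avoid $0$ and (b) are internally and mutually disjoint in $\Z_{2n^2+2}$. Since the common differences $a+b$ and $a+b+g$ turn out to be coprime to $v$ — or at least of order larger than the progression length — both conditions follow from elementary divisibility bounds on $2n^2+2$, exactly as in \cite{CDFP} but with $2n^2$ replaced by $2n^2+2$.

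The main obstacle, and the only place where passing from modulus $2n^2$ to $2n^2+2$ could bite, is step (b) above: a partial sum that was safely nonzero modulo $2n^2$ might accidentally be $\equiv 0\pmod{2n^2+2}$, or two partial sums that were distinct might now collide. I expect to handle this by the same length-versus-modulus estimate used in \cite{CDFP}: each of the two progressions in $S(\omega)$ has at most $\ell+1\le n$ terms with common difference whose absolute value, as an ordinary integer, is at most $n^2$ or so, hence strictly between $0$ and $2n^2+2$ in a controlled range, so no wrap-around identification occurs; the cross terms $k(a+b)$ versus $a+k'(a+b+g)$ are separated by the constant offset $a\in[1,n^2]$, again too small to be absorbed modulo $2n^2+2$. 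Collecting these estimates over all $2n$ rows and columns completes the verification that $A$ is a globally simple $\N\H_2(n;n)$, and by Remark~\ref{rem:2} condition $(\mathrm{c_1})$ comes for free. Note that oddness of $n$ is never used, which is why the proposition is stated for all $n\ge 1$.
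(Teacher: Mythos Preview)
Your proposal is correct and follows essentially the same approach as the paper: reuse the array $a_{i,j}=\varepsilon\bigl(j+(i-1)n\bigr)$ from \cite[Theorem~5.5]{CDFP}, observe that its support $[1,n^2]$ already misses the order-$2$ subgroup $\{0,n^2+1\}$ of $\Z_{2n^2+2}$, and then redo the partial-sum verification over the new modulus. The only cosmetic difference is that the paper cites \cite{CDFP} directly for the explicit partial-sum sets $X_j,Y_j$ (and writes out the analogous $X_i,Y_i$ for rows) rather than routing through Lemma~\ref{lemma:main}, and then disposes of the disjointness/non-vanishing check in one line each --- your ``length-versus-modulus'' estimate is exactly what underlies that line.
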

\begin{proof}
Let $n \geq 1$ be an integer, and consider the $n \times n$ matrix $A=(a_{i,j})$ with elements in $\Z_{2n^2+2}$ whose $(i,j)$-th entry is:
\[
a_{i,j} = \varepsilon [j+(i-1)n],
\]
where $\varepsilon=1$ if $i \equiv j \pmod{2}$, and $\varepsilon = -1$ otherwise.
Since the array is tight and, as shown in \cite{CDFP}, $supp(A) = [1,n^2]$, conditions $(\rm{a_1})$ and $(\rm{b_1})$  of Definition \ref{def:NZS} are satisfied.

It now remains to check that the array satisfies condition $(\rm{c_1})$ of Definition \ref{def:NZS}
and it is globally simple.
As proved in \cite{CDFP},  the unordered list of the partial sums of the $j$-th column of $A$, for $j \in [1,n]$, is $S(C_j) = X_j \cup Y_j$, where:
\[
\begin{aligned}
X_j    &= \left\{ (-1)^j kn : k \in \left[1, \left\lfloor \frac{n}{2}\right\rfloor\right]   \right\}, \\
Y_j &= \left\{  (-1)^{j+1} (j+kn): k \in \left[0, \left\lfloor \frac{n-1}{2}\right\rfloor \right] \right\}.
\end{aligned}
\]
It can be easily seen that  $X_j$ and $Y_j$ are disjoint sets modulo $2n^2+2$, containing only non-zero elements.

Now, for every $i \in [1,n]$ the unordered list of partial sums of the $i$-th row of $A$ is $S(R_i) = X_i \cup Y_i$, where:
\[
\begin{aligned}
X_i    &= \left\{ (-1)^i k  : k \in \left[1, \left\lfloor \frac{n}{2}\right\rfloor\right]   \right\}, \\
Y_i &= \left\{  (-1)^{i+1} (1+(i-1)n+k): k \in \left[0, \left\lfloor \frac{n-1}{2}\right\rfloor \right] \right\}.
\end{aligned}
\]
As before, it can be  seen that $X_i$ and $Y_i$ are disjoint sets modulo $2n^2+2$, and that no element in these sets is zero.

Hence,  for every integer $n \geq 1$ the array is globally simple, so, in particular,  every row and column  has non-zero sum, proving
that also condition $(\rm{c_1})$ of Definition \ref{def:NZS} is satisfied.
\end{proof}

\begin{rem}\label{rem:sum_2}
From the globally simple array $A$ shown in Proposition \ref{prop:2}, the total sum of the elements of each row and of each column can be easily found, minding the parity of $n$.
For every odd integer $n \geq 1$, considering the element of $Y_j$ and that of $Y_i$ for $k=\frac{n-1}{2}$, we get that for every $i,\,j \in [1,n]$:
\[
\begin{aligned}
\sum_{a_{i,j} \in C_j} a_{i,j}  &=(-1)^{j+1} \left( j+  \frac{n-1}{2}\,n  \right); \\
\sum_{a_{i,j} \in R_i} a_{i,j} &=(-1)^{i+1} \left( (i-1)n  + \frac{n+1}{2} \right). \\
\end{aligned}
\]
Analogously, for every even integer $ n \geq 1$, considering the element of $X_j$ and that of $X_i$ with $k=\frac{n}{2}$, we obtain that for every $i,\,j \in [1,n]$:
\[
\begin{aligned}
\sum_{a_{i,j} \in C_j} a_{i,j}  &= (-1)^j \frac{n^2}{2} ; \\
\sum_{a_{i,j} \in R_i} a_{i,j} &= (-1)^i \frac{n}{2} .  \\
\end{aligned}
\]
\end{rem}

\begin{ex}

Following the proof of Proposition \ref{prop:2}, we obtain the $\N\H_{2}(11;11)$ below, whose elements belong to $\Z_{244}$:

\begin{center}
\begin{footnotesize}
$\begin{array}{|r|r|r|r|r|r|r|r|r|r|r|}\hline
1 & -2 & 3 & -4 & 5 & -6 & 7 & -8 & 9 & -10 & 11\\ \hline
-12 & 13 & -14 & 15 & -16 & 17 & -18 & 19 & -20 & 21 & -22\\ \hline
23 & -24 & 25 & -26 & 27 & -28 & 29 & -30 & 31 & -32 & 33\\ \hline
-34 & 35 & -36 & 37 & -38 & 39 & -40 & 41 & -42 & 43 & -44\\ \hline
45 & -46 & 47 & -48 & 49 & -50 & 51 & -52 & 53 & -54 & 55\\ \hline
-56 & 57 & -58 & 59 & -60 & 61 & -62 & 63 & -64 & 65 & -66\\ \hline
67 & -68 & 69 & -70 & 71 & -72 & 73 & -74 & 75 & -76 & 77\\ \hline
-78 & 79 & -80 & 81 & -82 & 83 & -84 & 85 & -86 & 87 & -88\\ \hline
89 & -90 & 91 & -92 & 93 & -94 & 95 & -96 & 97 & -98 & 99\\ \hline
-100 & 101 & -102 & 103 & -104 & 105 & -106 & 107 & -108 & 109 & -110\\ \hline
111 & -112 & 113 & -114 & 115 & -116 & 117 & -118 & 119 & -120 & 121\\ \hline
\end{array}$
\end{footnotesize}
\end{center}

And similarly, we obtain the $\N\H_2(10;10)$ below, filled with elements in $\Z_{202}$:
\begin{center}
\begin{footnotesize}
$\begin{array}{|r|r|r|r|r|r|r|r|r|r|}\hline
1 & -2 & 3 & -4 & 5 & -6 & 7 & -8 & 9 & -10\\ \hline
-11 & 12 & -13 & 14 & -15 & 16 & -17 & 18 & -19 & 20\\ \hline
21 & -22 & 23 & -24 & 25 & -26 & 27 & -28 & 29 & -30\\ \hline
-31 & 32 & -33 & 34 & -35 & 36 & -37 & 38 & -39 & 40\\ \hline
41 & -42 & 43 & -44 & 45 & -46 & 47 & -48 & 49 & -50\\ \hline
-51 & 52 & -53 & 54 & -55 & 56 & -57 & 58 & -59 & 60\\ \hline
61 & -62 & 63 & -64 & 65 & -66 & 67 & -68 & 69 & -70\\ \hline
-71 & 72 & -73 & 74 & -75 & 76 & -77 & 78 & -79 & 80\\ \hline
81 & -82 & 83 & -84 & 85 & -86 & 87 & -88 & 89 & -90\\ \hline
-91 & 92 & -93 & 94 & -95 & 96 & -97 & 98 & -99 & 100\\ \hline
\end{array}$
\end{footnotesize}
\end{center}
\end{ex}

\subsection{Globally simple $\N\H_{2n}(n;n)$}

\begin{prop}\label{prop:2n}
For every odd integer $n \geq 1$ there exists  a globally simple $\N\H_{2n}(n;n)$.
\end{prop}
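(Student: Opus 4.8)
The plan is to give an explicit $n \times n$ array $A = (a_{i,j})$ with entries in $\Z_{2n^2+2n}$, avoiding the subgroup $J$ of order $2n$, namely the multiples of $n+1$. Since $n$ is odd, I expect the natural choice is to build $A$ so that its support is a set of $n^2$ integers from $\left[1, n^2+n\right]$ that hits every residue except the $n$ forbidden absolute values $\{(n+1), 2(n+1), \dotsc, n(n+1)\} \cap \left[1,n^2+n\right]$ — more precisely the support should be $\left[1, n^2+n\right]$ with the $n$ multiples of $n+1$ removed, since $|J \setminus \{0\}|$ corresponds to $2n-1$ nonzero elements, i.e. $n$ absolute values (the element $n^2+n = n(n+1)$ being an involution when it equals $v/2$, which happens here since $v = 2(n^2+n)$). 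I would lay out the rows so that row $i$ contains a block of $n$ consecutive-in-support integers, with signs chosen by a checkerboard rule $\varepsilon = (-1)^{i+j}$ as in Proposition \ref{prop:2}, but with the indexing shifted to skip each multiple of $n+1$. Concretely, set the absolute value in cell $(i,j)$ to be the $\big((i-1)n + j\big)$-th smallest element of $\left[1,n^2+n\right] \setminus (n+1)\Z$. Conditions $(\mathrm{a_1})$ and $(\mathrm{b_1})$ of Definition \ref{def:NZS} are then immediate: every row and column has $n$ filled cells, and the support is exactly $\left[1,n^2+n\right]\setminus (n+1)\Z$, which together with the sign pattern gives each element of $\Z_v \setminus J$ exactly once in $\{\pm x : x\in A\}$.

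The substance of the argument is verifying global simplicity. For this I would apply Lemma \ref{lemma:main}: each row, read left to right from its first entry, and each column, read top to bottom from its first entry, should be recognizable as an ordering of the shape $\omega$ or $\nu$ for appropriate $a$, $b$, $g$, $\ell$. Within a single row the absolute values are $n$ consecutive elements of $\left[1,n^2+n\right]\setminus(n+1)\Z$ with alternating signs, so pairing the first with the third, fifth, \dots and the second with the fourth, sixth, \dots exhibits the row as the ordering $\omega$ (or $\nu$, depending on the parity of $n$) with common difference $g = \pm 2$ (up to the occasional unit jump where a multiple of $n+1$ is skipped — I would need to check these jumps do not spoil the arithmetic-progression structure, or absorb them by a slightly more careful choice of the row blocks). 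Lemma \ref{lemma:main}(1)--(4) then give the two partial-sum sets $X_i = \{k(a+b) : \dots\}$ and $Y_i = \{a + k(a+b+g) : \dots\}$; I would show $X_i$ and $Y_i$ are disjoint modulo $v$ and contain no zero, exactly as in Proposition \ref{prop:2}. The column analysis is parallel: down a column the absolute values increase in steps of roughly $n$ (again with small corrections at skipped multiples of $n+1$), alternating in sign, so the same lemma applies with $g \approx \pm n$.

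The main obstacle I anticipate is the bookkeeping around the deleted multiples of $n+1$: deleting those $n$ values from $\left[1,n^2+n\right]$ perturbs the otherwise-clean arithmetic progressions inside rows and especially inside columns, so the ordering in a given row or column may be a concatenation of two $\omega$/$\nu$-type pieces rather than a single one, and one must check that the partial sums across the break remain distinct and nonzero modulo $2n^2+2n$. I would handle this either by choosing the correspondence between cells and support values so that each row block lies strictly between two consecutive multiples of $n+1$ (possible precisely because $n$ is odd, which controls the residue of $n^2+n$ and makes the gaps line up), or, failing a clean single block, by splitting $S(R_i)$ and $S(C_j)$ into the contributions before and after the skip and bounding each piece inside a residue window of length less than $v$. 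Once the disjointness and nonvanishing of all partial sums is established, Remark \ref{rem:2} gives condition $(\mathrm{c_1})$ for free, completing the proof.
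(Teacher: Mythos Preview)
Your construction differs from the paper's but the plan is sound. The paper sets $a_{i,j}=i+(n+1)(j-1)$ when $i\equiv j\pmod 2$ and $a_{i,j}=n^2+n-i-(n+1)(j-1)$ otherwise, so every entry is a positive integer in $[1,n^2+n-1]$ and the parity alternation switches between two positive formulas rather than flipping a sign; you instead keep the checkerboard sign of Proposition~\ref{prop:2} and stretch the absolute values, which made precise is $a_{i,j}=(-1)^{i+j}\big[(i-1)(n+1)+j\big]$. Both arrays feed each row and column into Lemma~\ref{lemma:main}(1), so the verification has the same shape. Your anticipated obstacle, however, does not arise: the $\big((i-1)n+j\big)$-th element of $[1,n^2+n]\setminus(n+1)\Z$ is exactly $(i-1)(n+1)+j$, so row $i$ is the block of $n$ \emph{consecutive} integers $(i-1)(n+1)+1,\dots,(i-1)(n+1)+n$ with no jumps at all, and column $j$ is the exact arithmetic progression $j,\,j+(n+1),\,j+2(n+1),\dots$ with step $n+1$ (not ``roughly $n$ with corrections''). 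Every row and column is therefore a single $\omega$-ordering ($g=\pm2$ for rows, $g=\pm2(n+1)$ for columns, with $a+b=\mp1$ and $a+b=\mp(n+1)$ respectively), and the two partial-sum sets from the lemma land in $[1,n^2+n-1]$ and $-[1,n^2+n-1]$ inside $\Z_{2n^2+2n}$, so disjointness and nonvanishing are immediate; your last paragraph of contingency planning is unnecessary. What your route buys is a transparent one-line modification of the $t=2$ array; what the paper's choice buys is the specific row and column sums recorded in Remark~\ref{rem:sum_2n}, which are the ones used downstream in the face-length computations of Proposition~\ref{prop:2n_length_faces}.
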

\begin{proof}
Let $n\geq 1$ be odd and consider the $n \times n$ array $A$ with elements in $\Z_{2n^2+2n}$ whose $(i,j)$-th entry is:
\[
a_{i,j}= \left\{\begin{aligned}
&i + (n+1)(j-1) \qquad &\text{for $i\equiv j \pmod{2}$,} \\
&n^2+n-i - (n+1)(j-1) \qquad &\text{for $i\not\equiv j \pmod{2}$.} \\
\end{aligned}\right.
\]
$A$ has no empty cells, thus condition ($\rm{a_1})$ of Definition \ref{def:NZS} is trivially satisfied.
It can then be verified that, for $j \in \left[1,\frac{n-1}{2}\right]$, the support of the columns is:
\[
\begin{aligned}
supp(C_j \cup C_{n-j+1}) &= \left[1+(n+1)(j-1),n + (n+1)(j-1)\right] \cup \\ &\left[n^2 - (n+1)(j-1), n^2+n-1 -(n+1)(j-1) \right]
\end{aligned}
\]
and that
$$supp\left(C_{\frac{n+1}{2}}\right) = \left[1+\frac{n^2-1}{2}, n+\frac{n^2-1}{2}\right],$$
from which we obtain that $supp(A) =[1,n^2+n]\setminus \{n+1,2(n+1),\ldots, n(n+1)\}$.

We now examine the partial sums of the columns of $A$.
For any odd $j\in [1,n]$ the $j$-th column of $A$ is given by:
\[
\begin{aligned}
C_j = &\big( 1+(n+1)(j-1), n^2+n-2-(n+1)(j-1), 3+ (n+1)(j-1), \\
& n^2+n-4-(n+1)(j-1),\dotsc, n^2+n-(n-1)-(n+1)(j-1),\\
&  n+ (n+1)(j-1)\big).
\end{aligned}
\]
Define now:
\[
\begin{aligned}X &= \left\{k(n^2+n-1): \, k \in\left[1,\frac{n-1}{2}\right] \right\}, \\  Y_j &= \left\{1+(n+1)(j-1)+k(n^2+n+1):\,
k \in \left[0,\frac{n-1}{2}\right] \right\}.
\end{aligned}
\]
  By applying Lemma \ref{lemma:main}(1) with $a = 1+(n+1)(j-1)$, $b = n^2+n-2-(n+1)(j-1)$, $g=2$ and $\ell=\frac{n-1}{2}$, it can be seen that for any column $C_j$, with $j$ odd, it results
$S(C_j) = X\cup Y_j$.
We notice that in $\Z_{2n^2+2n}$ both $X$ and $Y_j$ are sets not containing zero. It remains to check that $X$ and $Y_j$ are disjoint. Assume by contradiction that there exist $k_1 \in\left[1,\frac{n-1}{2}\right]$, $k_2 \in \left[0,\frac{n-1}{2}\right] $ and an odd $j$ such that:
\[
k_1(n^2+n-1) \equiv 1+(n+1)(j-1)+k_2(n^2+n+1) \pmod{2n^2+2n}.
\]
It is easy to see that this would imply:
\[
k_1+k_2+1 \equiv 0 \pmod{n+1}.
\]
However, by the hypothesis on $k_1$ and $k_2$ we have $1 \leq k_1+k_2 \leq n-1$, so we obtain a contradiction. Then, the partial sums of $C_j$, $j$ odd, are pairwise distinct, and non-zero.

 It can be checked that for any even $j\in[1,n]$ the $j$-th column of $A$ is:
\[
\begin{aligned}
C_j = &\big( n^2+n-1-(n+1)(j-1), 2+(n+1)(j-1), n^2+n-3-(n+1)(j-1), \\ &4+(n+1)(j-1),\dotsc,
n-1+(n+1)(j-1), n^2-(n+1)(j-1)\big).
\end{aligned}
\]

Now, consider:
\[
\begin{aligned}
X &= \left\{k(n^2+n+1): \, k\in \left[1,\frac{n-1}{2}\right]\right\}, \\  Y_j &= \left\{n^2-2-(n+1)(j-2)+k(n^2+n-1):\,  k \in \left[0, \frac{n-1}{2}\right] \right\}.
\end{aligned}
\]
By applying Lemma \ref{lemma:main}(1) with $a = n^2+n-1-(n+1)(j-1)$, $b = (n+1)(j-1)+2$, $g = -2$ and $\ell=\frac{n-1}{2}$, it can be seen that for any column $C_j$, with $j$ even, we have
$S(C_j) = X\cup Y_j.$
Reasoning as before, one can see that $X$ and $Y_j$ are disjoint sets not containing zero, hence the partial sums are pairwise distinct and non-zero.

Now we consider the partial sums of the rows. It is easy to see
 that for any odd $i \in [1,n]$ the $i$-th row of the matrix is:
\[
\begin{aligned}
R_i = &\big( i, n^2-1-i, i+ 2(n+1), n^2-1-i-2(n+1), i + 4(n+1), \dotsc, \\
 &n^2-1-i-(n-3)(n+1), i + (n-1)(n+1) \big). \\
\end{aligned}
\]
Then, set:
    \[
    \begin{aligned}
    X &= \left\{k(n^2-1): \, k\in \left[1,\frac{n-1}{2}\right] \right\},\\
    Y_i &= \left\{i+k(n^2+2n+1):\,  k\in\left[0,\frac{n-1}{2}\right] \right\}. \\
    \end{aligned}
    \]
    By applying Lemma \ref{lemma:main}(1) with $a= i$, $b = n^2-1-i$, $g = 2n+2$ and $\ell=\frac{n-1}{2}$, it can be seen that for any row $R_i$, $i$ odd, it results
  $ S(R_i) = X\cup Y_i.$
		  Since we are working in $\Z_{2n^2+2n}$, we can rewrite $Y_i$ as:
    $$Y_i =\left\{i+k(1-n^2): \, k\in \left[0,\frac{n-1}{2}\right] \right\}.$$
	Now, we have to show that $S(R_i)$ is a set and that $0 \not\in S(R_i)$.  We first notice that in $\Z_{2n^2+2n}$ both $X$ and $Y_i$ are sets not containing zero.
By way of contradiction, assume that for some $k_1 \in \left[1,\frac{n-1}{2}\right]$, $k_2 \in \left[0,\frac{n-1}{2}\right]$ and odd $i$ we have:
    \[
    k_1(n^2-1) \equiv i+k_2(1-n^2) \pmod{2n^2+2n}.
    \]
    This would imply that:
    \[
    (k_1+k_2)(n^2-1) \equiv (k_1+k_2)(n-1)(n+1) \equiv i \pmod{2n^2+2n}.
    \]
    It would then follow that $i$ is a multiple of $n+1$; however, $i\in [1,n]$. We have then proven that all the partial sums are distinct and  non-zero.

Finally, it is not hard to see that for any even $i \in [1,n]$  the $i$-th row of $A$ is:
\[
\begin{aligned}
R_i = &\big(n^2+n-i, i+ (n+1), n^2+n-i-2(n+1), i+ 3(n+1),\dotsc, \\
&i+(n-2)(n+1), n^2+n-i-(n+1)(n-1)  \big). \\
\end{aligned}
\]
Now, let:
    \[
    \begin{aligned}
    X &=\left\{k(n^2+2n+1): \, k \in\left[ 1,\frac{n-1}{2}\right] \right\},\\  Y_i &= \left\{n^2+n-i+k(n^2-1):\,  \left[ 0,\frac{n-1}{2}\right] \right\}.
    \end{aligned}
    \]
    By applying Lemma \ref{lemma:main}(1) with $a = n^2+n-i$, $b = n+1+i$, $g=-2n-2$ and $\ell=\frac{n-1}{2}$, it can be seen that for any row $R_i$, $i$ even, we have $ S(R_i) = X\cup Y_i.$
 Since, as before, we are working in $\Z_{2n^2+2n}$, we can rewrite $X$ as:
    $$X =\left\{k(1-n^2): \, k\in \left[1,\frac{n-1}{2}\right] \right\}.$$
    With an argument similar to the previous one we can show that $X$ and $Y_i$ are disjoint sets not containing zero, proving that the rows of
		$A$ admit a  simple  natural ordering. This concludes the proof.
\end{proof}

\begin{rem}\label{rem:sum_2n}
Let $A$ be the globally simple array described in Proposition \ref{prop:2n}.
For every odd $j \in [1,n]$ looking at the element of the corresponding $Y_j$ obtained for $k=\frac{n-1}{2}$ we get
\[
\sum_{a_{i,j} \in C_j} a_{i,j}  =j(n+1)+\frac{n^3-2n-1}{2} = (n+1)\left(\frac{n^2-n-1}{2}+j\right);
\]
while for every even $j \in [1,n]$:
\[
\sum_{a_{i,j} \in C_j} a_{i,j}  =-j(n+1)+\frac{n^3+2n^2+2n+1}{2} = (n+1)\left(\frac{n^2+n+1}{2}-j\right).
\]
It can be seen that the previous expressions can be written in a more compact form, obtaining that for every $j \in [1,n]$:
 \[
 \sum_{a_{i,j} \in C_j} a_{i,j}  = (n+1)\left(\frac{n^2}{2} +(-1)^j\,\frac{n+1-2j}{2}\right).
 \]

Finally, for every $i \in [1,n]$, considering the element of the corresponding $Y_i$ with $k=\frac{n-1}{2}$ we get
\[
\sum_{a_{i,j} \in R_i} a_{i,j} = \frac{n^2+n}{2}+(-1)^{i} \left( \frac{n^3+1}{2}-i\right).
\]
\end{rem}

\begin{ex}
Following the proof of Proposition \ref{prop:2n}, we obtain the $\N\H_{22}(11;11)$ below, whose elements belong to $\Z_{264}$:

\begin{center}
\begin{footnotesize}
$\begin{array}{|r|r|r|r|r|r|r|r|r|r|r|}\hline
1 & 119 & 25 & 95 & 49 & 71 & 73 & 47 & 97 & 23 & 121\\ \hline
130 & 14 & 106 & 38 & 82 & 62 & 58 & 86 & 34 & 110 & 10\\ \hline
3 & 117 & 27 & 93 & 51 & 69 & 75 & 45 & 99 & 21 & 123\\ \hline
128 & 16 & 104 & 40 & 80 & 64 & 56 & 88 & 32 & 112 & 8\\ \hline
5 & 115 & 29 & 91 & 53 & 67 & 77 & 43 & 101 & 19 & 125\\ \hline
126 & 18 & 102 & 42 & 78 & 66 & 54 & 90 & 30 & 114 & 6\\ \hline
7 & 113 & 31 & 89 & 55 & 65 & 79 & 41 & 103 & 17 & 127\\ \hline
124 & 20 & 100 & 44 & 76 & 68 & 52 & 92 & 28 & 116 & 4\\ \hline
9 & 111 & 33 & 87 & 57 & 63 & 81 & 39 & 105 & 15 & 129\\ \hline
122 & 22 & 98 & 46 & 74 & 70 & 50 & 94 & 26 & 118 & 2\\ \hline
11 & 109 & 35 & 85 & 59 & 61 & 83 & 37 & 107 & 13 & 131\\ \hline
\end{array}$
\end{footnotesize}
\end{center}
\end{ex}

\subsection{Globally simple $\N\H_{n^2}(n;n)$}

\begin{prop}\label{prop:n2}
For every odd integer $n\geq 1$, there exists a globally simple $\N\H_{n^2}(n;n)$.
\end{prop}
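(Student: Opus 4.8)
The plan is to imitate the array underlying Proposition~\ref{prop:2}, adjusting it so that the support skips the multiples of $3$. Since here $v=2n^2+n^2=3n^2$ and the subgroup of order $n^2$ in $\Z_{3n^2}$ is $3\Z_{3n^2}$, I would take the $n\times n$ array $A=(a_{i,j})$ with entries in $\Z_{3n^2}$ given by $a_{i,j}=\varepsilon\,f\big(j+(i-1)n\big)$, where $\varepsilon=1$ if $i\equiv j\pmod 2$ and $\varepsilon=-1$ otherwise, and $f\colon[1,n^2]\to\Z$ is the increasing bijection onto the set of integers of $\big[1,\tfrac{3n^2-1}{2}\big]$ that are not divisible by $3$, i.e.\ $f(2s-1)=3s-2$ and $f(2s)=3s-1$. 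Condition $(\mathrm{a_1})$ holds because $A$ is tight, and condition $(\mathrm{b_1})$ reduces to the (routine) observations that $f$ is a bijection, that $f(n^2)=\tfrac{3n^2-1}{2}$, and that the $n^2$ values it takes form exactly one representative of each pair $\{x,-x\}$ with $x\in\Z_{3n^2}\setminus 3\Z_{3n^2}$; so $supp(A)=\{x\in[1,\tfrac{3n^2-1}{2}]:3\nmid x\}$ and $(\mathrm{b_1})$ follows.

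The real content is global simplicity. First I would write out the natural orderings of the rows and of the columns. Since $n$ is odd and $f$ sends consecutive integers alternately into the residue classes $1$ and $2$ modulo $3$, a short computation shows that, read from its first cell, each row and each column of $A$ is an instance of the zigzag ordering $\omega=(a,b,a+g,b-g,\dots,a+\ell g)$ of Lemma~\ref{lemma:main} with $\ell=\tfrac{n-1}{2}$: for a row one finds $g=\pm 3$ with $a+b\in\{-1,2\}$, and for a column $g=\pm 3n$ with $a+b\in\{\tfrac{1-3n}{2},\tfrac{1+3n}{2}\}$, the four cases according to the parities of $i$ and of $j$, with $a$ an explicit linear function of the row (resp.\ column) index. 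Lemma~\ref{lemma:main}(1) then delivers, in each case, $S(R_i)=X\cup Y$ and $S(C_j)=X\cup Y$ with $X=\{k(a+b):1\le k\le\tfrac{n-1}{2}\}$ and $Y=\{a+k(a+b+g):0\le k\le\tfrac{n-1}{2}\}$ fully explicit, and it remains to check, in all four cases, that $X$ and $Y$ are sets of pairwise distinct non-zero elements of $\Z_{3n^2}$.

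Pairwise distinctness inside $X$ and inside $Y$, and the non-vanishing of their elements, will follow from $\gcd(a+b,3n^2)=\gcd(a+b+g,3n^2)=1$ — trivial for the rows, and for the columns a consequence of $\gcd(3n\pm 1,3n^2)=1$. The main obstacle will be the disjointness of $X$ and $Y$. Here, unlike in Proposition~\ref{prop:2n}, reducing modulo $3$ and modulo $n$ does not by itself preclude a coincidence, so instead I would confine $X$ and $Y$ to two disjoint sub-intervals of $\{1,2,\dots,3n^2-1\}$. For example, when $i$ is odd one obtains $Y\subseteq\big[1,\tfrac{3n^2-n}{2}\big]$ and $X\subseteq\big[3n^2-\tfrac{n-1}{2},\,3n^2-1\big]$, while the other three cases yield similar (case-dependent) pairs of intervals; the point to be careful about is how the index-dependent endpoint of $Y$ moves as the row (resp.\ column) index runs over all its odd or all its even values, and the verification that its extreme position still does not reach the block occupied by $X$. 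Once all the partial sums of every row and column have been shown distinct and non-zero, Remark~\ref{rem:2} gives condition $(\mathrm{c_1})$, and $A$ is the desired globally simple $\N\H_{n^2}(n;n)$.
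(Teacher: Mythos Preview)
Your construction is correct and genuinely different from the paper's. The paper defines an array with two column-regimes (one rule for $j\le\frac{n+1}{2}$, another for $j\ge\frac{n+3}{2}$), which makes each row a concatenation of two zigzags; this forces a case split on $n\pmod 4$ and requires all four parts of Lemma~\ref{lemma:main}, together with four partial-sum sets $X,Y_i,Z_i,T_i$ per row and several non-trivial congruence checks. Your array, by contrast, is uniform: both rows and columns are single zigzags of length $n$, so Lemma~\ref{lemma:main}(1) alone suffices and there is no dependence on $n\pmod 4$. The disjointness of your $X$ and $Y$ via interval confinement is also cleaner than the paper's modular arguments (I checked all four parities: the extreme positions of $Y$ never reach the block occupied by $X$ in $[0,3n^2-1]$). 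One small imprecision: the gcd conditions you cite give distinctness within $X$ and within $Y$ and non-vanishing of the $X$-elements, but not directly non-vanishing of the $Y$-elements; however, your interval argument already handles this, since neither interval contains $0$. The only thing you lose relative to the paper is the specific row/column-sum formulas of Remark~\ref{rem:sum_n2}, which the paper later exploits in Proposition~\ref{prop:n2_length_faces} to compute face lengths; for Proposition~\ref{prop:n2} itself your argument is both shorter and more transparent.
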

\begin{proof}
Let $n \geq 1$ be an odd integer, and consider the $n \times n$ matrix $A$ with elements in $\Z_{3n^2}$ whose $(i,j)$-th entry is:
\[
a_{i,j} = \left\{
\begin{aligned}
&\varepsilon\left(3 n (j-1)+3 (i-1)+1 \right) &\text{for  $j \in \left[ 1, \frac{n+1}{2} \right]$,} \\
&\varepsilon\left(3 n j-3 i+1 \right) &\text{for $j \in \left[\frac{n+3}{2},n\right]$,}\\
\end{aligned}
\right.
\]
where $\varepsilon = 1$ if $i \equiv j \pmod{2}$ and $\varepsilon = -1$ otherwise.
Condition $(\rm{a_1})$ of Definition \ref{def:NZS} is trivially verified. It is easy to see that for $j\in \left[1,\frac{n-1}{2}\right]$ :
$$supp(C_j \cup C_{n-j+1}) =[ 1+3n(j-1), 3nj-1 ]\setminus \{3k: k\in\left[n(j-1)+1,nj-1\right]\},$$

also,
\[
supp\left(C_{\frac{n+1}{2}}\right) = \left[1+3n\frac{n-1}{2}, \frac{3n^2-1}{2}\right] \setminus\left\{3k: k\in\left[ \frac{n^2-n+2}{2}, \frac{n^2-1}{2}\right]  \right\}.
\]
It then follows that condition $(\rm{b_1})$ of Definition \ref{def:NZS} is satisfied.

We now need to check that the property of being globally simple holds.
It is easy to see that for any odd $j \in \left[1,\frac{n+1}{2}\right]$, the $j$-th column of $A$ is:
\[
\begin{aligned}
C_j &= \big(3n(j-1)+1, -3n(j-1)-4, 3n(j-1)+7, -3n(j-1)-10, \dotsc,      \\
&-3n(j-1)-3(n-2)-1, 3n(j-1)+3(n-1)+1 \big).
\end{aligned}
\]

Define now:
\[
\begin{aligned}
X&= \left\{ -3k: k \in \left[1,\frac{n-1}{2} \right] \right\}, \\
Y_j &= \left\{3n(j-1) + 3k+1: k \in \left[0,\frac{n-1}{2}\right]\right\}. \\
\end{aligned}
\]
By applying Lemma \ref{lemma:main}(1), with $a =3n(j-1)+1$, $b =-3n(j-1)-4$, $g = 6$ and $\ell=\frac{n-1}{2}$, we obtain $S(C_j) = X \cup Y_j$.
It can then be seen that $X$ and $Y_j$ are sets non containing $0$. Note also that $X$ and $Y_j$ are trivially disjoint for every  odd $j\in \left[1,\frac{n+1}{2}\right]$, as their elements belong to different equivalence classes modulo $3$.

Now, for every odd $j\in \left[\frac{n+3}{2}, n\right]$, the $j$-th column of $A$ is:
$$ C_j = \big(3nj-2, -3nj+5, 3nj-8, -3nj+11, \dotsc,
-3nj+3(n-1)-1, 3nj-3n+1 \big). $$

Define now:
\[
\begin{aligned}
X&= \left\{ 3k : k \in \left[1,\frac{n-1}{2}\right] \right\}, \\
Y_j &= \left\{3nj-2-3k: k\in\left[ 0,\frac{n-1}{2}\right]\right\}. \\
\end{aligned}
\]
By applying Lemma \ref{lemma:main}(1) with $a=3nj-2$, $b=-3nj+5$, $g = -6$ and $\ell=\frac{n-1}{2}$, we obtain $S(C_j) = X \cup Y_j$.
Clearly, $X$ and $Y_j$ are sets, containing only non-zero elements. By following the same argument to the previous one, we have that they are disjoint for every odd $j\in \left[\frac{n+3}{2}, n\right] $.

It can be verified that for every even $j \in \left[1,\frac{n+1}{2}\right]$ the $j$-th column of $A$ is:
\[
\begin{aligned}
C_j &= \big(-3n(j-1)-1, 3n(j-1)+4, -3n(j-1)-7, 3n(j-1) + 10, \dotsc,\\
&3n(j-1)+3(n-2)+1,-3n(j-1) - 3(n-1)-1\big).
\end{aligned}
\]

Define:
\[
\begin{aligned}
X&= \left\{ 3k: k \in \left[1,\frac{n-1}{2}\right]\right\}, \\
Y_j &= \left\{-3n(j-1)-1 -3k: k \in \left[ 0,\frac{n-1}{2}\right] \right\}. \\
\end{aligned}
\]
By applying Lemma \ref{lemma:main}(1) with $a =-3n(j-1)-1$, $b =3n(j-1)+4$, $g =-6$ and $\ell=\frac{n-1}{2}$ it is easy to see that $S(C_j)= X \cup Y_j$. Moreover, $X$ and $Y_j$ are sets not containing zero that are disjoint for every even $j\in \left[1,\frac{n+1}{2}\right]$ since their elements belong to different equivalence classes modulo $3$.

Finally, for every even $j \in \left[ \frac{n+3}{2},n\right]$ the $j$-th column of $A$ is:
$$C_j = \big(-3nj+2, 3nj-5,-3nj +8, 3nj-11,\dotsc,3nj-3(n-1)+1, -3nj+3n-1\big).$$

Define then:
\[
\begin{aligned}
X&= \left\{ -3k: k \in \left[1,\frac{n-1}{2}\right] \right\}, \\
Y_j &= \left\{2-3nj +3k: k \in \left[ 0, \frac{n-1}{2}\right]\right\}. \\
\end{aligned}
\]
By applying Lemma \ref{lemma:main}(1) with $a =-3nj+2$, $b =3nj-5$, $g =6$ and $\ell=\frac{n-1}{2}$, we obtain $S(C_j) = X \cup Y_j$. As before, note that, $X$ and $Y_j$ are disjoint sets, containing only non-zero elements.

We have then proven that every column of the array admits a simple  natural ordering.

Now, in order to check that the same property holds also for the rows, we need to distinguish two cases according to the congruence class of $n$ modulo $4$.

\paragraph{\textbf{Case 1:}  $n \equiv 1 \pmod{4}$.\\ }

Let $i \in [1,n]$ be odd, it can then be checked that the $i$-th row of $A$ is:
\[
\tiny{
\begin{aligned}
R_i &= \Big( \underbrace{3i-2,-3n-3i+2, 3i-2+6n,-9n-3i+2, \dotsc, 3i-2+3n\frac{n-1}{2}}_{\frac{n+1}{2}\ \text{terms}}, \\
& \underbrace{3n\frac{n-3}{2}+3i-1,-3n\frac{n-5}{2}-3i+1,3n\frac{n-7}{2}+3i-1,-3n\frac{n-9}{2}-3i+1,  \dotsc,
3n+3i-1, -3i+1}_{\frac{n-1}{2}\ \text{terms}} \Big).
\end{aligned}}
\]

Define:
   \[
    \begin{aligned}
    X &= \left\{-3nk: \,k \in  \left[1,\frac{n-1}{4}\right] \right\}, \\
    Y_i &= \left\{3i-2+3nk:\, k \in \left[0,\frac{n-1}{4} \right]\right\}, \\
    Z_i &=\left\{3i-2+3n\left(k+\frac{n-1}{4}\right): \,k \in \left[ 1,\frac{n-1}{4}\right]\right\}, \\
    T_i &=\left\{ 6i - 3 + 3n\frac{3n-7}{4} -3n k :\, k\in\left[0,\frac{n-5}{4}\right] \right\}.   \\
    \end{aligned}
    \]
By applying Lemma \ref{lemma:main}(1) to the first $\frac{n+1}{2}$ elements of $R_i$, with $a = 3i-2$, $b = -3n -3i+2$, $g =6n $
and $\ell=\frac{n-1}{4}$, we obtain that $X \cup Y_i \subset S(R_i) $. Then, by applying Lemma \ref{lemma:main}(4) to the remaining $\frac{n-1}{2}$ elements of $R_i$ with $a = 1-3i$, $b = -1+3n+3i $, $g = -6n$ and $\ell=\frac{n-5}{4}$, and adding the $\frac{n+1}{2}$-th partial sum of $R_i$, that is $3i-2+3n\frac{n-1}{4}$, we obtain $S(R_i) = X \cup Y_i \cup Z_i \cup T_i$. It is easy to see that these are sets not containing zero; it then remains to check that they are pairwise disjoint.
Notice that some of these sets are trivially disjoint modulo $3n^2$, since their elements belong to different equivalence classes modulo $3$.
Hence, it suffices to check that $X \cap T_i = \emptyset$ and $Y_i \cap Z_i = \emptyset$. In the first case, assume by contradiction that there exist
$k_1 \in \left[1,\frac{n-1}{4}\right]$, $k_2 \in \left[0,\frac{n-5}{4}\right]$ and $i$ such that:
\[
-3nk_1 \equiv 6i-3+ 3n \frac{3n-7}{4} -3nk_2 \pmod{3n^2}.
\]
This can be rewritten as:
\[
6i-3+3n\left(\frac{3n-7}{4}+k_1-k_2 \right) \equiv 0 \pmod{3n^2}.
\]
By the hypothesis on $k_1$ and $k_2$, the left-hand side is strictly positive in $\Z$ and the
element in  brackets is less than or equal to $n-2$. This fact, combined with $i \leq n$, proves that the left-hand side in $\Z$ is an integer in $[1, 3n^2-3]$,
so we get a contradiction.
Now, if there exists $i$ such that $Y_i\cap Z_i \neq \emptyset$, we would have:
\[
3n\left(k_2-k_1 + \frac{n-1}{4} \right) \equiv 0 \pmod{3n^2}
\]
with $k_1 \in \left[0,\frac{n-1}{4}\right]$ and $k_2\in \left[1,\frac{n-1}{4}\right]$.
However, the term inside brackets is  strictly positive and cannot exceed $n/2-1$, thus the equation cannot hold.

It can be checked that for every even $i \in [1, n]$ the $i$-th row of $A$ is:
\[
\tiny{
\begin{aligned}
R_i &= \Big( \underbrace{-1-3(i-1), 1+3(i-1)+3n, -1-3(i-1)-6n, 1+3(i-1)+9n, \dotsc,-1- 3(i-1)-3n\frac{n-1}{2}}_{\frac{n+1}{2}\ \text{terms}}, \\
& \underbrace{1-3n\frac{n-3}{2}-3i, -1+3n\frac{n-5}{2}+3i,1-3n\frac{n-7}{2}-3i,-1+3n\frac{n-9}{2}+3i, \dotsc,-1+3n+3i, 1-3i}_{\frac{n-1}{2}\ \text{terms}} \Big).
\end{aligned}}
\]

Set:
   \[
    \begin{aligned}
    X &= \left\{3nk: \, k \in \left[1,\frac{n-1}{4}\right]\right\}, \\
Y_i &= \left\{2-3i-3nk:\,  k \in \left[0,\frac{n-1}{4}\right]\right\}, \\
Z_i &=\left\{2-3i-3n\left(k+\frac{n-1}{4}\right): \, k\in\left[1,\frac{n-1}{4}\right]\right\}, \\
T_i &=\left\{3 -6i+ 3n\frac{7-3n}{4}+3nk:\, k \in \left[ 0, \frac{n-5}{4}\right] \right\}.   \\
    \end{aligned}
    \]
By applying Lemma \ref{lemma:main}(1) to the first $\frac{n+1}{2}$ elements of $R_i$ with $a = -1-3(i-1)$, $b= 1+3(i-1)+3n$, $g=-6n$ and
$\ell=\frac{n-1}{4}$, we obtain $X \cup Y_i \subset S(R_i)$. Then, by applying Lemma \ref{lemma:main}(4) to the remaining $\frac{n-1}{2}$ terms of $R_i$, with $a = -1+3i$, $b = 1-3n-3i$, $g = 6n$ and $\ell=\frac{n-5}{4}$, and by adding the $\frac{n+1}{2}$-th partial sum of $R_i$ that is $2-3i-3n\frac{n-1}{4}$, we obtain $S(R_i) = X \cup Y_i \cup Z_i \cup T_i$. It can be seen that these are sets containing only non-zero elements, thus it remains to check that they are pairwise disjoint modulo $3n^2$.
Some pairs of these sets are trivially disjoint, as their elements belong to different equivalence classes modulo $3$.
We then need to verify $Y_i \cap Z_i = \emptyset$ and $X \cap T_i = \emptyset$. In the first case, suppose by contradiction that:
\[
3n \left(\frac{n-1}{4} + k_2-k_1 \right) \equiv 0 \pmod{3n^2}
\]
for some $k_1 \in \left[0,\frac{n-1}{4}\right]$ and $k_2 \in \left[1,\frac{n-1}{4}\right]$.
However, it can be seen that this cannot hold as $k_2-k_1 \leq \frac{n-1}{4}$. In the second case, again by contradiction,
suppose we have:
\[
3nk_1 \equiv 3-6i +3n \frac{7-3n}{4} + 3nk_2 \pmod{3n^2},
\]
for some $k_1 \in \left[1,\frac{n-1}{4}\right]$, $k_2 \in \left[0,\frac{n-5}{4}\right]$ and for some even $i$.
This is equivalent to:
\[
3-6i + 3n \left( \frac{7-3n}{4} + k_2-k_1 \right) \equiv 0 \pmod{3n^2}.
\]
Now, it can be seen that the term inside the brackets is strictly negative in $\Z$, and is bounded from below by $2-n$.
By noticing that $i < n$, it can be seen that the left-hand side is a negative integer greater than $3-3n^2$, so we get a contradiction.

\paragraph{\textbf{Case 2:}  $n \equiv 3 \pmod{4}$. \\}

It is not hard to see that, since we are working modulo $3n^2$, the $i$-th row of $A$, for every odd $i \in [1,n]$, is:
\[
\tiny{
\begin{aligned}
R_i &= \Big( \underbrace{1+3(i-1), -1-3(i-1)-3n, 1+3(i-1)+6n, -1-3(i-1)-9n, \dotsc,-1-3(i-1)-3n\frac{n-1}{2}}_{\frac{n+1}{2}\ \text{terms}}, \\
& \underbrace{1-3n\frac{n-3}{2}-3i, -1+3n\frac{n-5}{2}+3i, 1-3n\frac{n-7}{2}-3i, -1+3n\frac{n-9}{2}+3i, \dotsc,-1+3n+3i, 1-3i}_{\frac{n-1}{2}\ \text{terms}} \Big).  \\
\end{aligned}}
\]

Define:
   \[
\begin{aligned}
    X &= \left\{-3nk: \, k \in \left[1,\frac{n+1}{4}\right] \right\}, \\
Y_i &= \left\{3i-2+3nk:\, k \in \left[ 0,\frac{n-3}{4}\right] \right\}, \\
Z &=\left\{-3n\left(k+\frac{n+1}{4}\right): \, k \in \left[1,\frac{n-3}{4}\right]\right\}, \\
T_i &=\left\{ 1 - 3i - 3n \frac{3n-5}{4} +3nk:\,k \in \left[  0,\frac{n-3}{4}\right] \right\}.   \\
    \end{aligned}
    \]
Now, by applying Lemma \ref{lemma:main}(3) to the first $\frac{n+1}{2}$ elements of $R_i$, with $a = 3i-2$, $b = -3n -3i+2$, $g =6n $
and $\ell=\frac{n-3}{4}$ we obtain that $X \cup Y_i \subset S(R_i) $. Then, by applying Lemma \ref{lemma:main}(2) to the remaining $\frac{n-1}{2}$ elements with $a = 1-3i$, $b = -1+3n+3i $, $g = -6n$ and $\ell=\frac{n-3}{4}$, and adding the $\frac{n+1}{2}$-th partial sum of $R_i$ that is $-3n\frac{n+1}{4}$, we obtain $S(R_i) = X \cup Y_i \cup Z \cup T_i$. It can then be easily seen that these are sets not containing any zero element, thus it remains to check that they are pairwise disjoint.
By considering the same sets written modulo $3$, we only need to check that $X \cap Z= \emptyset$ and $Y_i \cap T_i = \emptyset$. In the first case suppose by contradiction that:
\[
-3n k_1 \equiv -3n \left(k_2 + \frac{n+1}{4} \right) \pmod{3n^2},
\]
for some $k_1 \in \left[1,\frac{n+1}{4}\right]$ and $k_2 \in \left[1,\frac{n-3}{4}\right]$.
Clearly, this cannot  hold since $k_2 - k_1 \leq \frac{n-7}{4}$.
Suppose now $Y_i \cap T_i \neq \emptyset$, namely that
\[
3i - 2 + 3nk_1 \equiv 1 - 3i- 3n \frac{3n-5}{4} + 3nk_2 \pmod{3n^2}
\]
for some $k_1, k_2 \in \left[0,\frac{n-3}{4}\right]$ and some odd $i$.
Then:
\[
3-6i -3n \left( \frac{3n-5}{4} -k_2+k_1 \right) \equiv 0 \pmod{3n^2}.
\]
It can then be seen that in $\Z$ the left-hand side is strictly negative, and that the term in brackets cannot exceed $n-2$.
Since $i \in[1,n]$ the left-hand side is an integer contained in $[-3n^2+3,-1]$, hence we have a contradiction.

Recalling that we are working in $\Z_{3n^2}$,
one can check that for every even $i \in [1, n]$  the $i$-th row of $A$ is:
\[\tiny{
\begin{aligned}
R_i &= \Big( \underbrace{-1-3(i-1), 1+3(i-1)+3n, -1-3(i-1)-6n, 1+3(i-1)+9n, \dotsc,1+3(i-1)+3n\frac{n-1}{2}}_{\frac{n+1}{2}\ \text{terms}}, \\
& \underbrace{-1+3n\frac{n-3}{2}+3i, 1-3n\frac{n-5}{2}-3i, -1+3n\frac{n-7}{2}+3i, 1-3n\frac{n-9}{2}-3i, \dotsc,1-3n-3i, -1+3i}_{\frac{n-1}{2}\ \text{terms}} \Big).
\end{aligned}}
\]
Define now:
   \[
\begin{aligned}
    X &= \left\{3nk: \, k \in \left[ 1,\frac{n+1}{4}\right] \right\}, \\
    Y_i &= \left\{2-3i-3nk:\, k \in \left[ 0,\frac{n-3}{4}\right] \right\}, \\
    Z &=\left\{3n\left(k+\frac{n+1}{4}\right): \, k \in \left[1,\frac{n-3}{4}\right]\right\},\\
    T_i &=\left\{-1+3i+3n \frac{3n-5}{4} -3nk:\,k \in \left[0,\frac{n-3}{4}\right] \right\}.   \\
    \end{aligned}
    \]
By applying Lemma \ref{lemma:main}(3) to the first $\frac{n+1}{2}$ elements of $R_i$ with $a = -1-3(i-1)$, $b= 1+3(i-1)+3n$, $g=-6n$ and
$\ell=\frac{n-3}{4}$, we obtain $X \cup Y_i\subset S(R_i)$. Finally, by applying Lemma \ref{lemma:main}(2) to the remaining $\frac{n-1}{2}$ terms, with $a = -1+3i$, $b = 1-3n-3i$, $g = 6n$ and $\ell=\frac{n-3}{4}$, and adding the $\frac{n+1}{2}$-th partial sum of $R_i$, that is $3n\frac{n+1}{4}$, we obtain $S(R_i) = X \cup Y_i \cup Z \cup T_i$.  We can see that these are sets containing only non-zero elements, so we only need to prove that they are pairwise disjoint.
As before, it suffices to verify $X \cap Z = \emptyset$ and $Y_i \cap T_i = \emptyset$. In the first case, by means of contradiction, suppose
\[
3n k_1 \equiv 3n \left(k_2 + \frac{n+1}{4}\right) \pmod{3n^2}
\]
for some $k_1\in \left[ 1,\frac{n+1}{4}\right]$ and $k_2\in \left[ 1,\frac{n-3}{4}\right]$.
This cannot hold since $k_2-k_1 \leq \frac{n-3}{4}$. In the second case, by contradiction,
assume that for some $k_1$, $k_2 \in \left[ 0,\frac{n-3}{4}\right]$ and even $i$ it holds:
\[
2-3i -3nk_1 \equiv -1+3i+ 3n \frac{3n-5}{4} - 3nk_2 \pmod{3n^2}.
\]
This is equivalent to:
\[
6i-3+3n\left( \frac{3n-5}{4} +k_1-k_2\right) \equiv 0 \pmod{3n^2}.
\]
It can be seen that the left-hand side is strictly positive in $\Z$; since $i\leq n$ and $k_1-k_2 \leq \frac{n-3}{4}$, the left-hand side is smaller than or equal to $3n^2-3$. Thus, the previous equation cannot hold, proving that $Y_i$ and $T_i$ are disjoint sets.

We then have shown that for every odd $n$ the rows and the columns of $A$ admit a simple  natural ordering.
This concludes the proof.
\end{proof}

\begin{rem}\label{rem:sum_n2}
For every $j \in [1,n]$ the sum of the elements of each column of the globally simple array $A$ described in Proposition \ref{prop:n2} is the following:
\[
\sum_{a_{i,j} \in C_j} a_{i,j}  =(-1)^{j-1} \left( 3nj-\frac{3n+1}{2} \right).
\]
Looking at the columns it is easy to see that for every $j \in [1,n]$ and for every $k \in [0,\lfloor\frac{n-5}{4}\rfloor]$ we have:
\[
			a_{1+2k,j}+a_{2+2k,j} = - (a_{n-2k,j} + a_{n-2k-1,j}).
\]
Hence if $n \equiv 1 \pmod{4}$ the total sum of the $j$-th column is given by $a_{\frac{n+1}{2},j}$, while for $n \equiv 3 \pmod{4}$ the sum is given by $a_{\frac{n-1}{2},j}+ a_{\frac{n+1}{2},j} + a_{\frac{n+3}{2},j}$.

Analogously, it can be seen that for every $i \in [1,n]$ and for every $k \in [0,\lfloor\frac{n-5}{4}\rfloor]$ we have:
\[
			a_{i,1+2k}+a_{i,2+2k} = - (a_{i,n-2k} + a_{i,n-2k-1}).
\]
We then obtain that for $n \equiv 1 \pmod{4}$ the total sum of the $i$-th row is given by $a_{i,\frac{n+1}{2}}$, while for $n \equiv 3 \pmod{4}$ the sum is given by $a_{i,\frac{n-1}{2}}+ a_{i,\frac{n+1}{2}} + a_{i,\frac{n+3}{2}}$.
Note also that $|a_{i,\frac{n-1}{2}} - a_{i,\frac{n+3}{2}}| = 1$.

Hence, for the rows we have that, if $n \equiv 1 \pmod{4}$, for every $i \in [1,n]$:
\[
\sum_{a_{i,j} \in R_i} a_{i,j} = (-1)^{i-1} \left(3i - 2 +3n \frac{n-1}{2}\right)=a_{i,\frac{n+1}{2}};
\]
 while, if  $n \equiv 3 \pmod{4}$, for every $i \in [1,n]$:
\[
\sum_{a_{i,j} \in R_i} a_{i,j} = (-1)^{i-1} \left(3(n+1-i) - 2 +3n \frac{n-1}{2}\right)=a_{i,\frac{n+1}{2}} + (-1)^i.
\]
\end{rem}

\begin{ex}
Following the proof of Proposition \ref{prop:n2}, we obtain the $\N\H_{169}(13;13)$ below, whose elements belong to $\Z_{507}$:

\begin{center}
\begin{footnotesize}
$\begin{array}{|r|r|r|r|r|r|r|r|r|r|r|r|r|}\hline
1 & -40 & 79 & -118 & 157 & -196 & 235 & 197 & -158 & 119 & -80 & 41 & -2\\ \hline
-4 & 43 & -82 & 121 & -160 & 199 & -238 & -200 & 161 & -122 & 83 & -44 & 5\\ \hline
7 & -46 & 85 & -124 & 163 & -202 & 241 & 203 & -164 & 125 & -86 & 47 & -8\\ \hline
-10 & 49 & -88 & 127 & -166 & 205 & -244 & -206 & 167 & -128 & 89 & -50 & 11\\ \hline
13 & -52 & 91 & -130 & 169 & -208 & 247 & 209 & -170 & 131 & -92 & 53 & -14\\ \hline
-16 & 55 & -94 & 133 & -172 & 211 & -250 & -212 & 173 & -134 & 95 & -56 & 17\\ \hline
19 & -58 & 97 & -136 & 175 & -214 & 253 & 215 & -176 & 137 & -98 & 59 & -20\\ \hline
-22 & 61 & -100 & 139 & -178 & 217 & 251 & -218 & 179 & -140 & 101 & -62 & 23\\ \hline
25 & -64 & 103 & -142 & 181 & -220 & -248 & 221 & -182 & 143 & -104 & 65 & -26\\ \hline
-28 & 67 & -106 & 145 & -184 & 223 & 245 & -224 & 185 & -146 & 107 & -68 & 29\\ \hline
31 & -70 & 109 & -148 & 187 & -226 & -242 & 227 & -188 & 149 & -110 & 71 & -32\\ \hline
-34 & 73 & -112 & 151 & -190 & 229 & 239 & -230 & 191 & -152 & 113 & -74 & 35\\ \hline
37 & -76 & 115 & -154 & 193 & -232 & -236 & 233 & -194 & 155 & -116 & 77 & -38\\ \hline
\end{array}$
\end{footnotesize}
\end{center}

And similarly, we obtain the $\N\H_{121}(11;11)$ below, filled with elements in $\Z_{363}$:

\begin{center}
\begin{footnotesize}
$\begin{array}{|r|r|r|r|r|r|r|r|r|r|r|r|r|}\hline
1 & -34 & 67 & -100 & 133 & -166 & -134 & 101 & -68 & 35 & -2\\ \hline
-4 & 37 & -70 & 103 & -136 & 169 & 137 & -104 & 71 & -38 & 5\\ \hline
7 & -40 & 73 & -106 & 139 & -172 & -140 & 107 & -74 & 41 & -8\\ \hline
-10 & 43 & -76 & 109 & -142 & 175 & 143 & -110 & 77 & -44 & 11\\ \hline
13 & -46 & 79 & -112 & 145 & -178 & -146 & 113 & -80 & 47 & -14\\ \hline
-16 & 49 & -82 & 115 & -148 & 181 & 149 & -116 & 83 & -50 & 17\\ \hline
19 & -52 & 85 & -118 & 151 & 179 & -152 & 119 & -86 & 53 & -20\\ \hline
-22 & 55 & -88 & 121 & -154 & -176 & 155 & -122 & 89 & -56 & 23\\ \hline
25 & -58 & 91 & -124 & 157 & 173 & -158 & 125 & -92 & 59 & -26\\ \hline
-28 & 61 & -94 & 127 & -160 & -170 & 161 & -128 & 95 & -62 & 29\\ \hline
31 & -64 & 97 & -130 & 163 & 167 & -164 & 131 & -98 & 65 & -32\\ \hline
\end{array}$
\end{footnotesize}
\end{center}

\end{ex}

\subsection{Globally simple $\N\H_{2n^2}(n;n)$}

\begin{prop}\label{prop:2n2}
For every  odd integer $n\geq 1$, there exists a globally simple $\N\H_{2n^2}(n;n)$.
\end{prop}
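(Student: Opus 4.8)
The plan is to reuse, after a rescaling, the array of Proposition \ref{prop:2}. Here $v=2n^2+t=4n^2$, and the subgroup $J\le \Z_{4n^2}$ of order $t=2n^2$ is exactly $2\Z_{4n^2}$, the set of even residues, so $\Z_{4n^2}\setminus J$ consists of the $2n^2$ odd residues $\pm1,\pm3,\dots,\pm(2n^2-1)$. Consequently a tight array realises condition $(\rm{b_1})$ precisely when its support equals $\{1,3,5,\dots,2n^2-1\}$. This suggests defining $A=(a_{i,j})$ over $\Z_{4n^2}$ by
\[
a_{i,j}=\varepsilon\bigl(2[j+(i-1)n]-1\bigr),
\]
with $\varepsilon=1$ if $i\equiv j\pmod 2$ and $\varepsilon=-1$ otherwise; that is, one applies the map $x\mapsto 2x-1$ to the absolute values of the entries of the array of Proposition \ref{prop:2}. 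Since $j+(i-1)n$ runs over $[1,n^2]$ as $(i,j)$ runs over $[1,n]^2$, the support of $A$ is $\{1,3,\dots,2n^2-1\}$ and the multiset $\{\pm x\mid x\in A\}$ contains each odd residue of $\Z_{4n^2}$ exactly once; together with tightness this gives conditions $(\rm{a_1})$ and $(\rm{b_1})$ of Definition \ref{def:NZS}.

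For global simplicity I would, as in the previous propositions, read each row and each column starting from its first entry and recognise the resulting ordering as one of the shapes handled by Lemma \ref{lemma:main}(1). Writing $m=(i-1)n$, for odd $i$ the row $R_i$ is $(2m+1,-(2m+3),2m+5,-(2m+7),\dots)$, which is the ordering $\omega$ of Lemma \ref{lemma:main} with $a=2m+1$, $b=-(2m+3)$, $g=4$, $\ell=\tfrac{n-1}{2}$; part (1) then gives $S(R_i)=\{-2k:1\le k\le\tfrac{n-1}{2}\}\cup\{2m+1+2k:0\le k\le\tfrac{n-1}{2}\}$. For even $i$ one uses $a=-(2m+1)$, $b=2m+3$, $g=-4$. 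For the columns, writing the $j$-th column for odd $j$ as $(2j-1,-(2n+2j-1),4n+2j-1,-(6n+2j-1),\dots)$ one applies Lemma \ref{lemma:main}(1) with $a=2j-1$, $b=-(2n+2j-1)$, $g=4n$, $\ell=\tfrac{n-1}{2}$, obtaining $S(C_j)=\{-2nk:1\le k\le\tfrac{n-1}{2}\}\cup\{2j-1+2nk:0\le k\le\tfrac{n-1}{2}\}$; the even case uses $a=-(2j-1)$, $b=2n+2j-1$, $g=-4n$.

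In every one of these four cases the partial-sum list splits as $X\cup Y$, where $X$ consists of even residues and $Y$ of odd residues, so $X$ and $Y$ are automatically disjoint modulo $4n^2$. Within $X$ the relevant multiples ($2k$ or $2nk$) are distinct positive integers less than $4n^2$ and nonzero, so $X$ has no repetition and avoids $0$; within $Y$ the values (or their negatives) are odd positive integers bounded above by $2n^2-n<4n^2$, hence again distinct and nonzero. Thus each row and column of $A$ admits a simple natural ordering, so $A$ is globally simple, and by Remark \ref{rem:2} condition $(\rm{c_1})$ of Definition \ref{def:NZS} holds as well. I do not expect a genuine obstacle: the only points needing care are checking that the $Y$-sets do not wrap around modulo $4n^2$ and that, as just noted, the even/odd dichotomy makes the $X$- and $Y$-parts disjoint for free; the hypothesis that $n$ is odd is used exactly to make $\ell=\tfrac{n-1}{2}$ an integer, so that the alternating sign pattern of each row and column closes up correctly.
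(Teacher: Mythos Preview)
Your argument is correct and is genuinely different from (and considerably shorter than) the paper's. The paper builds a bespoke array with four defining formulas depending on the parity of $i+j$ and on whether $j\le\frac{n+1}{2}$, all entries being positive; checking global simplicity for the rows then forces a split into the cases $n\equiv 1\pmod 4$ and $n\equiv 3\pmod 4$, each handled by two applications of Lemma \ref{lemma:main} and a sequence of ad hoc disjointness arguments. Your construction instead recycles the $\N\H_2(n;n)$ of Proposition \ref{prop:2} via the rescaling $|x|\mapsto 2|x|-1$; each row and column then falls under a single application of Lemma \ref{lemma:main}(1), and the even/odd split of the partial sums makes the disjointness of $X$ and $Y$ automatic. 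What your approach gains in elegance it loses in flexibility downstream: the paper's array produces row and column sums of a specific shape (Remark \ref{rem:sum_2n2}) that are exploited in Proposition \ref{prop:2n2_length_faces} to control the face lengths of the resulting biembedding, whereas your array gives different total sums $\pm(2(i-1)n+n)$ and $\pm(2j-1+n^2-n)$, so that analysis would have to be redone. For the bare existence statement of Proposition \ref{prop:2n2}, however, your route is preferable.
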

\begin{proof}
Let $n\geq 1$ be odd, and consider the $n \times n$ matrix $A$ with elements in $\Z_{4n^2}$ whose $(i,j)$-th entry is:
\[
a_{i,j} = \left\{
\begin{aligned}
&2n(j-1)+2i-1 &\text{for $i \equiv j \pmod{2}$ and $j \in \left[ 1, \frac{n+1}{2}\right]$,} \\
&2 n(n-j+1)-2i+1 &\text{for $i \not\equiv j \pmod{2}$ and $j \in \left[1, \frac{n+1}{2}\right]$,} \\
&2nj-2i+1 &\text{for $i \equiv j \pmod{2}$ and $j \in\left[\frac{n+3}{2},n \right]$,}\\
&2 n(n-j)+2 i-1  &\text{for $i \not\equiv j \pmod{2}$ and $j \in \left[\frac{n+3}{2},n\right]$.}\\
\end{aligned}
\right.
\]
Since $A$ has no empty cells, condition  $(\rm{a_1})$ of Definition \ref{def:NZS} is trivially satisfied.

For $j \in \left[1,\frac{n-1}{2}\right]$, one can check that
$supp(C_j \cup C_{n-j+1})$ contains exactly all the odd elements of $[1+2n(j-1), 2nj-1] \cup [2n^2+1-2nj, 2n^2-1-2n(j-1)]$,
and that  $supp\left(C_{\frac{n+1}{2}}\right) $ contains exactly all the odd elements of $[n(n-1)+1, n(n+1)-1] $.
Since we have to avoid the elements of the subgroup of order $2n^2$ of $\Z_{4n^2}$,
condition $(\rm{b_1})$ of Definition \ref{def:NZS} is satisfied too.

Now we consider the partial sums of the columns of $A$.
By the description of $A$ it immediately follows  that for every odd  $j \in \left[1,\frac{n+1}{2}\right]$ we have:
\[
\begin{aligned}
C_j &= \big(2n(j-1)+1, 2n(n-j+1)-3, 2n(j-1)+5, 2n(n-j+1)-7, \dotsc, \\
& 2n(n-j+1)-2n+1, 2n(j-1)+2n-1 \big).
\end{aligned}
\]

Define now:
\[
\begin{aligned}
X&= \left\{ k (2n^2 - 2): k \in \left[1,\frac{n-1}{2}\right]\right\}, \\
Y_j &= \left\{1 + 2n(j-1) + k(2n^2+2): k\in\left[0,\frac{n-1}{2}\right]\right\}. \\
\end{aligned}
\]
By applying Lemma \ref{lemma:main}(1) with $a =2n(j-1)+1 $, $b = 2n(n-j+1)-3$, $g=4$
and $\ell=\frac{n-1}{2}$, we obtain $S(C_j) = X   \cup Y_j$.
It can be checked that both $X$ and $Y_j$ are sets, containing no zero elements, and they are trivially disjoint as their elements belong to different classes modulo $2$.
Similarly, it can be seen that for every odd $j\in \left[ \frac{n+3}{2}, n \right]$, the $j$-th column of $A$ is:
$$C_j= \big(2nj-1, 2n(n-j)+3,2nj-5,2n(n-j)+7, \dotsc,2n(n-j)+2n-3, 2nj-2n+1\big).$$

Set:
\[
\begin{aligned}
X&= \left\{ k (2n^2 + 2): k \in \left[1,\frac{n-1}{2}\right]\right\}, \\
Y_j &= \left\{2nj-1+ k(2n^2-2): k\in\left[0,\frac{n-1}{2}\right]\right\}. \\
\end{aligned}
\]
By applying Lemma \ref{lemma:main}(1) with $a = 2nj-1$, $b=2n(n-j)+3$, $g=-4$ and $\ell=\frac{n-1}{2}$,
we get $S(C_j) = X \cup Y_j$. It can then be easily verified that these are disjoint sets non containing zero.

Now, for any even $j \in \left[1,\frac{n+1}{2}\right]$  the $j$-th column of $A$ is:
\[
\begin{aligned}
C_j &= \big(2n(n-j+1)-1, 2n(j-1)+3, 2n(n-j+1)-5,2n(j-1)+7, \dotsc, \\
    &2n(j-1)+2n-3 , 2n(n-j+1)-2n+1\big).  \\
\end{aligned}
\]

Consider:
\[
\begin{aligned}
X&= \left\{ k (2n^2 + 2): k \in \left[ 1,\frac{n-1}{2}\right]\right\}, \\
Y_j &= \left\{2n(n-j+1)-1 + k(2n^2-2):k \in \left[0,\frac{n-1}{2}\right]\right\}. \\
\end{aligned}
\]
By applying Lemma \ref{lemma:main}(1) with $a=2n(n-j+1)-1$, $b=2n(j-1)+3 $, $g=-4$ and $\ell=\frac{n-1}{2}$ we have $S(C_j) = X \cup Y_j$. It can be seen that $X$ and $Y_j$ are sets, containing exclusively non-zero elements, and as before, $X$ and $Y_j$ are trivially disjoint.

Similarly, it can be seen that for every even $j \in \left[\frac{n+3}{2},n\right]$ the $j$-th column of $A$ is:
$$C_j = \big(2n(n-j)+1,2nj-3,2n(n-j)+5,2nj-7, \dotsc, 2nj-2n+3, 2n(n-j)+2n-1\big).$$

By applying Lemma \ref{lemma:main}(1) with $a=2n(n-j)+1$, $b=2nj-3$, $g=4$ and $\ell=\frac{n-1}{2}$, we have $S(C_j) = X \cup Y_j$, where:
\[
\begin{aligned}
X&= \left\{ k (2n^2 - 2): k \in \left[ 1, \frac{n-1}{2}\right]\right\}, \\
Y_j &= \left\{2n(n-j)+1 + k(2n^2+2): k \in \left[0,\frac{n-1}{2}\right]\right\}, \\
\end{aligned}
\]
which can be checked to be disjoint sets not containing zero elements.

It remains to consider the partial sums of every row. We have to split the proof into
two cases according to the congruence class of $n$ modulo $4$.
\paragraph{\textbf{Case 1:}  $n \equiv 1 \pmod{4}$.\\ }
For every odd $i \in [1,n]$, the $i$-th row of $A$ is:
\[
\tiny{
\begin{aligned}
R_i &= \Big( \underbrace{2i-1, 2n(n-1)-2i+1,4n+2i-1,2n(n-3)-2i+1, \dotsc,2n\frac{n-1}{2}+2i-1}_{\frac{n+1}{2}\ \text{terms}}, \\
& \underbrace{2n\frac{n-3}{2}+2i-1, 2n\frac{n+5}{2}-2i+1, 2n\frac{n-7}{2}+2i-1, 2n\frac{n+9}{2}-2i+1, \dotsc, 2n+2i-1, 2n^2-2i+1}_{\frac{n-1}{2}\ \text{terms}} \Big).  \\
\end{aligned}}
\]
Set:
   \[
\begin{aligned}
    X &= \left\{k(2n^2-2n): \, k \in \left[1,\frac{n-1}{4}\right]\right\}, \\
    Y_i &= \left\{2i-1+k(2n^2+2n):\,  k \in \left[0,\frac{n-1}{4}\right] \right\}, \\
    Z_i &=\left\{-1+2i+(2n^2+2n)\left(k+\frac{n-1}{4}\right): \, k\in\left[1,\frac{n-1}{4}\right]\right\}, \\
		T_i &=\left\{4i-2+\frac{n^3+2n^2-7n}{2}+k(2n^2-2n):\, k \in \left[ 0,\frac{n-5}{4}\right] \right\}.   \\
    \end{aligned}
    \]
By applying Lemma \ref{lemma:main}(1) to the first $\frac{n+1}{2}$ elements of $R_i$ with $a =2i-1$, $b =2n(n-1)-2i+1 $, $g=4n$ and $\ell=\frac{n-1}{4}$, we obtain $X \cup Y_i \subset S(R_i)$. Then, by applying Lemma \ref{lemma:main}(4) to the remaining elements with $a =2n^2-2i+1$,
$b = 2n+2i-1$, $g=-4n$ and $\ell=\frac{n-5}{4}$, and by adding the  $\frac{n+1}{2}$-th partial sum of $R_i$,
that is $2i-1+(2n^2+2n)\frac{n-1}{4}$, we obtain $S(R_i) = X \cup Y_i \cup Z_i \cup T_i$.
We can see that these are sets, containing only non-zero elements. It remains to check that they are pairwise disjoint.
By checking these sets modulo $2$, it is easy to see that it suffices to verify that $Y_i \cap Z_i = \emptyset$ and $X \cap T_i = \emptyset$.
In the first case, assuming that there exists an element in the intersection, this would imply that for some
$k_1 \in \left[0,\frac{n-1}{4}\right]$, $k_2 \in \left[1,\frac{n-1}{4}\right]$ and $i$ odd the following holds:
\[
2i-1+k_1(2n^2+2n) \equiv 2i-1+(2n^2+2n)\left(k_2+\frac{n-1}{4}\right) \pmod{4n^2},
\]
that is:
\[
\left(k_2+\frac{n-1}{4}-k_1\right)(2n^2+2n) \equiv 0 \pmod{4n^2}.
\]
Since we are working in $\Z_{4n^2}$, $2n^2(k_2+\frac{n-1}{4}-k_1) \in \{0,2n^2\}$, we get a contradiction by noticing that 
$k_2+\frac{n-1}{4}-k_1 \leq \frac{n-1}{2}$.

In the second case, again by contradiction, suppose we have:
\[
k_1 (2n^2-2n) \equiv 4i-2+\frac{n^3+2n^2-7n}{2}+k_2(2n^2-2n)\pmod{4n^2},
\]
for some $k_1\in \left[1,\frac{n-1}{4}\right]$, $k_2 \in \left[0,\frac{n-5}{4}\right]$ and some odd $i$. This can be rewritten as:
\[
-2+4i + \frac{n^3+2n^2-7n}{2}+(k_2-k_1)(2n^2-2n) \equiv 0 \pmod{4n^2}.
\]
Note that, since $n\equiv 1\pmod{4}$, we have that $n^3 =(4l+1) n^2 \equiv n^2 \pmod{4n^2}$, hence we can  rewrite the previous expression modulo $2n^2$, obtaining:
\[
-2+4i + \frac{3n^2-7n}{2}-2n(k_2-k_1)\equiv 0 \pmod{2n^2},
\]
which is equivalent to:
\[
-2+4i+ 2n \left(\frac{3n-7}{4}-k_2+k_1 \right) \equiv 0 \pmod{2n^2}.
\]
After noticing that the left-hand side is strictly positive in $\Z$, we observe that the bracket is bounded from above by $n-2$,
thus from $i \leq n$, the left-hand side seen in $\Z$ is less than or equal to $2n^2-2$, that is a contradiction.

For every even $i \in [1,n]$  the $i$-th row of $A$ is:
\[
\tiny{
\begin{aligned}
R_i &= \Big( \underbrace{2n^2-2i+1,2n+2i-1,2n(n-2)-2i+1,6n+2i-1, \dotsc,2n\frac{n+1}{2}-2i+1}_{\frac{n+1}{2}\ \text{terms}}, \\
& \underbrace{2n\frac{n+3}{2}-2i+1, 2n\frac{n-5}{2}+2i-1, 2n\frac{n+7}{2}-2i+1, 2n\frac{n-9}{2}+2i-1, \dotsc, 2n(n-1)-2i+1, 2i-1}_{\frac{n-1}{2}\ \text{terms}} \Big).
\end{aligned}}
\]
Let now:
   \[
\begin{aligned}
    X &= \left\{k(2n^2+2n): \, k \in \left[1,\frac{n-1}{4}\right]\right\}, \\
Y_i &= \left\{2n^2+1-2i+k(2n^2-2n):\, k \in \left[ 0,\frac{n-1}{4}\right] \right\}, \\
Z_i &=\left\{2n^2+1-2i+(2n^2-2n)\left(\frac{n-1}{4}+k\right):\,k \in \left[ 1,\frac{n-1}{4}\right]\right\}, \\
T_i &=\left\{ 2-4i+\frac{n^3+4n^2+7n}{2}  + k(2n^2+2n):\,  k \in \left[0,\frac{n-5}{4}\right] \right\}.   \\
    \end{aligned}
    \]
By applying Lemma \ref{lemma:main}(1) to the first $\frac{n+1}{2}$ elements of $R_i$ with $a = 2n^2-2i+1$, $b =2n+2i-1$,
$g=-4n$ and $\ell=\frac{n-1}{4}$, we obtain $X \cup Y_i \subset S(R_i)$.
Then, by applying Lemma \ref{lemma:main}(4) to the remaining elements with $a =2i-1$, $b =2n(n-1)-2i+1$, $g=4n$
and $\ell=\frac{n-5}{4}$, and by adding the $\frac{n+1}{2}$-th partial sum of $R_i$, that is $2n^2+1-2i+(2n^2-2n)\frac{n-1}{4}$, we obtain $S(R_i) = X \cup Y_i \cup Z_i \cup T_i$.
It can be easily seen that these are sets containing only non-zero elements, thus it only remains to check that they have trivial intersection.
As before, it suffices to check that $Y_i \cap Z_i = \emptyset$ and that $X \cap T_i = \emptyset$.
In the first case, by contradiction, we would have for some $k_1\in \left[ 0,\frac{n-1}{4}\right]$, $k_2 \in \left[ 1,\frac{n-1}{4}\right]$ and even $i$:
\[
2n^2+1-2i+k_1(2n^2-2n) \equiv 2n^2+1-2i+(2n^2-2n)\left(\frac{n-1}{4}+k_2\right) \pmod{4n^2},
\]
that is:
\[
(2n^2-2n)\left(\frac{n-1}{4}+k_2 -k_1\right)  \equiv 0 \pmod{4n^2}.
\]
We then have that $2n^2(\frac{n-1}{4}+k_2 -k_1) \in \{0,2n^2\}$ modulo $4n^2$, thus we obtain a contradiction by noticing that $\frac{n-1}{4}+k_2 -k_1 $ is strictly positive and bounded by $\frac{n-1}{2}$.

In the second case, again by contradiction, we would have for some $k_1 \in \left[1,\frac{n-1}{4}\right]$,
$k_2 \in\left[0,\frac{n-5}{4}\right]$ and even $i$ that:
\[
k_1(2n^2+2n) \equiv  2-4i+\frac{n^3+4n^2+7n}{2} + k_2(2n^2+2n) \pmod{4n^2}.
\]
As before $n^3\equiv n^2 \pmod{4n^2}$, hence we would have:
\[
2-4i+\frac{n^2+7n}{2} + 2n(k_2-k_1) \equiv 0 \pmod{2n^2}.
\]
It can be seen that the left-hand side is strictly positive in $\Z$, and as $i \in[1, n] $ and $k_2-k_1 \in\left[\frac{1-n}{4},\frac{n-9}{4}\right]$, is bounded from below by $2$ and from above by $n^2-n-2$,
hence we have a contradiction.

\paragraph{\textbf{Case 2:}  $n \equiv 3 \pmod{4}$.\\ }

It can be checked that for every odd $i \in [1,n]$ the $i$-th row of $A$ is:
\[\tiny{
\begin{aligned}
R_i &= \Big( \underbrace{2i-1, 2n(n-1)-2i+1,4n+2i-1,2n(n-3)-2i+1, \dotsc,2n\frac{n+1}{2}-2i+1}_{\frac{n+1}{2}\ \text{terms}}, \\
& \underbrace{2n\frac{n+3}{2}-2i+1, 2n\frac{n-5}{2}+2i-1, 2n\frac{n+7}{2}-2i+1, 2n\frac{n-9}{2}+2i-1, \dotsc, 2n+2i-1, 2n^2-2i+1}_{\frac{n-1}{2}\ \text{terms}} \Big).
\end{aligned}}
\]
Now, define:
\[
    \begin{aligned}
    X &= \left\{k(2n^2-2n): \, k \in \left[1,\frac{n+1}{4}\right]\right\}, \\
Y_i &= \left\{2i-1+k(2n^2+2n):\,  k \in \left[0,\frac{n-3}{4}\right]\right\}, \\
Z &=\left\{(2n^2-2n)\left(k+\frac{n+1}{4}\right): \, k\in\left[1,\frac{n-3}{4}\right]\right\}, \\
T_i &=\left\{1-2i+\frac{n^3+2n^2+5n}{2}+k(2n^2+2n):\, k \in \left[ 0, \frac{n-3}{4}\right] \right\}.   \\
    \end{aligned}
\]
As done for the case $n \equiv 1 \pmod{4}$, it is not hard to see that these are disjoint sets non containing zero.
Also, applying again Lemma \ref{lemma:main}, one can check that  $S(R_i) = X \cup Y_i \cup Z \cup T_i$.

For every even $i \in [1,n]$ the $i$-th row of $A$ is:
\[\tiny{
\begin{aligned}
R_i &= \Big( \underbrace{2n^2-2i+1,2n+2i-1,2n(n-2)-2i+1,6n+2i-1, \dotsc,2n\frac{n-1}{2}+2i-1}_{\frac{n+1}{2}\ \text{terms}}, \\
& \underbrace{2n\frac{n-3}{2}+2i-1, 2n\frac{n+5}{2}-2i+1, 2n\frac{n-7}{2}+2i-1, 2n\frac{n+9}{2}-2i+1, \dotsc, 2n(n-1)-2i+1, 2i-1}_{\frac{n-1}{2}\ \text{terms}} \Big).
\end{aligned}}
\]
Let now:

\[
    \begin{aligned}
    X &= \left\{k(2n^2+2n): \, k \in \left[1,\frac{n+1}{4}\right]\right\}, \\
Y_i &= \left\{2n^2-2i+1+k(2n^2-2n):\,  k \in \left[0,\frac{n-3}{4}\right]\right\}, \\
Z &=\left\{(2n^2+2n)\left(k+\frac{n+1}{4}\right): \, k\in\left[1,\frac{n-3}{4}\right]\right\}, \\
T_i &=\left\{2i-1+\frac{n^3+4n^2-5n}{2}+k(2n^2-2n):\, k \in \left[ 0, \frac{n-3}{4}\right] \right\}.   \\
    \end{aligned}
\]
It can then be seen that these are pairwise disjoint sets not containing zero,  and,
applying Lemma \ref{lemma:main}, that $S(R_i) = X \cup Y_i \cup Z \cup T_i$.

We can then conclude that for every odd $n$ the array $A$ is a globally simple $\N\H_{2n^2}(n;n)$.
\end{proof}

\begin{rem}\label{rem:sum_2n2}
For every odd integer $n$ we write the total sum of every  row and column of the array $A$ constructed in the proof of Proposition \ref{prop:2n2}. In particular, for every $j \in [1,n]$, if we take the element of the corresponding set $Y_j$ with $k=\frac{n-1}{2}$, we get:
\[
\sum_{a_{i,j} \in C_j} a_{i,j} =n^3 + (-1)^{j-1} (2nj-n^2-n).
\]
Analogously, if $n \equiv 1 \pmod{4}$ then  for every $i \in [1,n]$ if we take the element of the corresponding set $T_i$ with $k=\frac{n-5}{4}$, we get:
\[
\sum_{a_{i,j} \in R_i} a_{i,j} =n^3 + (-1)^{i-1} (2i-n-1);
\]
if $n \equiv 3 \pmod{4}$ then  for every $i \in [1,n]$ taking the element of the corresponding set $T_i$ with $k=\frac{n-3}{4}$, we obtain:
\[
\sum_{a_{i,j} \in R_i} a_{i,j} =n^3 + (-1)^{i-1} \left(n+1-2i\right).
\]
\end{rem}

\begin{ex}
Following the proof of Proposition \ref{prop:2n2}, we obtain the $\N\H_{338}(13;13)$ below, whose elements belong to $\Z_{676}$:
\begin{center}
\begin{footnotesize}
$\begin{array}{|r|r|r|r|r|r|r|r|r|r|r|r|r|}\hline
1 & 311 & 53 & 259 & 105 & 207 & 157 & 131 & 233 & 79 & 285 & 27 & 337\\ \hline
335 & 29 & 283 & 81 & 231 & 133 & 179 & 205 & 107 & 257 & 55 & 309 & 3\\ \hline
5 & 307 & 57 & 255 & 109 & 203 & 161 & 135 & 229 & 83 & 281 & 31 & 333\\ \hline
331 & 33 & 279 & 85 & 227 & 137 & 175 & 201 & 111 & 253 & 59 & 305 & 7\\ \hline
9 & 303 & 61 & 251 & 113 & 199 & 165 & 139 & 225 & 87 & 277 & 35 & 329\\ \hline
327 & 37 & 275 & 89 & 223 & 141 & 171 & 197 & 115 & 249 & 63 & 301 & 11\\ \hline
13 & 299 & 65 & 247 & 117 & 195 & 169 & 143 & 221 & 91 & 273 & 39 & 325\\ \hline
323 & 41 & 271 & 93 & 219 & 145 & 167 & 193 & 119 & 245 & 67 & 297 & 15\\ \hline
17 & 295 & 69 & 243 & 121 & 191 & 173 & 147 & 217 & 95 & 269 & 43 & 321\\ \hline
319 & 45 & 267 & 97 & 215 & 149 & 163 & 189 & 123 & 241 & 71 & 293 & 19\\ \hline
21 & 291 & 73 & 239 & 125 & 187 & 177 & 151 & 213 & 99 & 265 & 47 & 317\\ \hline
315 & 49 & 263 & 101 & 211 & 153 & 159 & 185 & 127 & 237 & 75 & 289 & 23\\ \hline
25 & 287 & 77 & 235 & 129 & 183 & 181 & 155 & 209 & 103 & 261 & 51 & 313\\ \hline
\end{array}$
\end{footnotesize}
\end{center}

And similarly, we obtain the $\N\H_{450}(15;15)$ below, filled with elements in $\Z_{900}$:

\begin{center}
\begin{footnotesize}
$\begin{array}{|r|r|r|r|r|r|r|r|r|r|r|r|r|r|r|}\hline
1 & 419 & 61 & 359 & 121 & 299 & 181 & 239 & 269 & 151 & 329 & 91 & 389 & 31 & 449\\ \hline
447 & 33 & 387 & 93 & 327 & 153 & 267 & 213 & 183 & 297 & 123 & 357 & 63 & 417 & 3\\ \hline
5 & 415 & 65 & 355 & 125 & 295 & 185 & 235 & 265 & 155 & 325 & 95 & 385 & 35 & 445\\ \hline
443 & 37 & 383 & 97 & 323 & 157 & 263 & 217 & 187 & 293 & 127 & 353 & 67 & 413 & 7\\ \hline
9 & 411 & 69 & 351 & 129 & 291 & 189 & 231 & 261 & 159 & 321 & 99 & 381 & 39 & 441\\ \hline
439 & 41 & 379 & 101 & 319 & 161 & 259 & 221 & 191 & 289 & 131 & 349 & 71 & 409 & 11\\ \hline
13 & 407 & 73 & 347 & 133 & 287 & 193 & 227 & 257 & 163 & 317 & 103 & 377 & 43 & 437\\ \hline
435 & 45 & 375 & 105 & 315 & 165 & 255 & 225 & 195 & 285 & 135 & 345 & 75 & 405 & 15\\ \hline
17 & 403 & 77 & 343 & 137 & 283 & 197 & 223 & 253 & 167 & 313 & 107 & 373 & 47 & 433\\ \hline
431 & 49 & 371 & 109 & 311 & 169 & 251 & 229 & 199 & 281 & 139 & 341 & 79 & 401 & 19\\ \hline
21 & 399 & 81 & 339 & 141 & 279 & 201 & 219 & 249 & 171 & 309 & 111 & 369 & 51 & 429\\ \hline
427 & 53 & 367 & 113 & 307 & 173 & 247 & 233 & 203 & 277 & 143 & 337 & 83 & 397 & 23\\ \hline
25 & 395 & 85 & 335 & 145 & 275 & 205 & 215 & 245 & 175 & 305 & 115 & 365 & 55 & 425\\ \hline
423 & 57 & 363 & 117 & 303 & 177 & 243 & 237 & 207 & 273 & 147 & 333 & 87 & 393 & 27\\ \hline
29 & 391 & 89 & 331 & 149 & 271 & 209 & 211 & 241 & 179 & 301 & 119 & 361 & 59 & 421\\ \hline
\end{array}$
\end{footnotesize}
\end{center}
\end{ex}

\subsection{Globally simple $\N\H_{t}(n;n)$, $t$ divides $n$}
\begin{thm}\label{thm:t}
For every odd integer $n\geq 1$ and for every divisor $t$ of $n$, there exists a globally simple $\N\H_t(n;n)$.
\end{thm}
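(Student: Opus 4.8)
The plan is to give a direct construction, in the style of Propositions \ref{prop:2n}, \ref{prop:n2} and \ref{prop:2n2}, of an $n\times n$ array meeting the requirements. Write $n=ts$ and set $v=2n^2+t$ and $d=v/t=2ns+1$, so that the subgroup $J\leq\Z_v$ of order $t$ is $d\,\Z_v$. Since $v$ is odd (as $t$, being a divisor of the odd integer $n$, is odd), on viewing elements of $\Z_v$ as integers in $\pm\bigl[1,\lfloor v/2\rfloor\bigr]=\pm\bigl[1,n^2+\tfrac{t-1}{2}\bigr]$, the nonzero elements of $J$ are precisely the multiples of $d$ in that range, namely $d,2d,\dots,\tfrac{t-1}{2}d$ (indeed $\tfrac{t-1}{2}d=n^2-ns+\tfrac{t-1}{2}\leq n^2+\tfrac{t-1}{2}$, whereas $\tfrac{t+1}{2}d>n^2+\tfrac{t-1}{2}$). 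Hence, by Remark \ref{rem:2}, it suffices to exhibit a tight $n\times n$ array $A$ over $\Z_v$ with
\[
supp(A)=\Bigl[1,\,n^2+\tfrac{t-1}{2}\Bigr]\setminus\bigl\{kd : 1\leq k\leq\tfrac{t-1}{2}\bigr\},
\]
a set of cardinality exactly $n^2$, each of whose rows and columns admits a simple natural ordering; condition $(\mathrm{c_1})$ of Definition \ref{def:NZS} then comes for free.

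I would define $A=(a_{i,j})$ by an explicit piecewise formula: a checkerboard sign $\varepsilon$ (with $\varepsilon=1$ exactly when $i\equiv j\pmod 2$) times a magnitude that is an arithmetic-progression-type expression in $i$ and $j$ whose increments are multiples of $d$ and whose offsets run, over all $n^2$ cells, through precisely the non-multiples of $d$ in $\bigl[1,n^2+\tfrac{t-1}{2}\bigr]$. Here $d=2ns+1$ plays the role that ``$n+1$'' plays in Proposition \ref{prop:2n}, whose modulus $2n(n+1)$ has the same ``relative'' shape as ours; since $d$ can be much larger than $n$, the offsets must depend on $j$ as well as on $i$, so the definition will treat the columns in several groups --- at least a ``left half'' $j\in\bigl[1,\tfrac{n+1}{2}\bigr]$ and a ``right half'' $j\in\bigl[\tfrac{n+3}{2},n\bigr]$ related by the reflection $j\leftrightarrow n-j+1$, as in Propositions \ref{prop:n2} and \ref{prop:2n2}, possibly with finer $t$-dependent structure --- and the analysis of the rows is expected to require a further split according to the residue of $n$ modulo $4$.

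With the formula fixed, the verification follows the established pattern. Condition $(\mathrm{a_1})$ is trivial since $A$ is tight. For $(\mathrm{b_1})$ I would compute $supp(C_j\cup C_{n-j+1})$, and $supp(C_{\frac{n+1}{2}})$, as short unions of integer intervals from which a single arithmetic progression of multiples of $d$ has been removed, and check that these pieces telescope onto the target support displayed above. For global simplicity I would read each column off the formula in the zig-zag shape $(a,b,a+g,b-g,\dots)$, apply the relevant item of Lemma \ref{lemma:main} to obtain $S(C_j)=X\cup Y_j$ with $X$ a set of multiples of a fixed step and $Y_j$ a translate, and check that $0\notin X\cup Y_j$ and $X\cap Y_j=\emptyset$ --- the latter being either immediate from the residues of the two sets modulo a small integer, or reducing to an impossible congruence among the indices $k_1,k_2$ in their prescribed ranges. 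Each row would be split at its central entry, with Lemma \ref{lemma:main}(1) or (3) applied to its first half, Lemma \ref{lemma:main}(2) or (4) to its second half, and the two glued through the central partial sum, yielding $S(R_i)=X\cup Y_i\cup Z_i\cup T_i$ with the four pieces $0$-free and pairwise disjoint by the same sort of elementary estimates on the index ranges. Remark \ref{rem:2} then gives $(\mathrm{c_1})$.

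The one genuinely hard step is the design of the closed form for $a_{i,j}$: it must \emph{simultaneously} make the column and row supports telescope onto $\bigl[1,n^2+\tfrac{t-1}{2}\bigr]$ with exactly the multiples of $d$ deleted, \emph{and} force every natural ordering into the zig-zag shape required by Lemma \ref{lemma:main} with partial-sum sets that are disjoint and avoid $0$ modulo $v$. Reconciling ``the offsets dodge the multiples of $d$'' with ``the partial sums dodge $0$ and one another'', while also carrying the bookkeeping of the column grouping and the $n\bmod 4$ split of the rows, is where essentially all the effort lies; once the formula is pinned down, each individual check is routine modular arithmetic. (The remaining case $t=1$, where $J$ is trivial, is already covered by \cite{CDFP}.)
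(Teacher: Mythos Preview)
Your proposal describes a \emph{strategy} but never actually gives the array: you write ``I would define $A=(a_{i,j})$ by an explicit piecewise formula'' and then spend the rest of the proposal explaining what properties that formula should have and how you would verify them, while conceding that producing the formula ``is where essentially all the effort lies''. That is the gap. A construction proof whose construction is missing is not a proof; the checks you outline are indeed routine \emph{once} the formula exists, but nothing you have written constrains it enough to determine it, and nothing guarantees that a formula with the left/right-half, central-row-split, $n\bmod 4$ shape you envisage exists for every divisor $t$ of $n$.

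It is also worth pointing out that the paper's construction does \emph{not} follow the template of Propositions~\ref{prop:n2} and~\ref{prop:2n2} that you are trying to imitate. Instead of a single closed formula with a left/right half, the paper builds an $n\times t$ block $H$ (whose $j$-th column alternates between $(j-1)\tfrac{v}{t}+\text{odd}$ and $j\tfrac{v}{t}-\text{even}$, with a global sign $(-1)^{j+1}$) and then sets
\[
A=\bigl[\,H\ \big|\ -H_{n}\ \big|\ H_{2n}\ \big|\ \cdots\ \big|\ H_{(\frac{n}{t}-1)n}\,\bigr],
\]
where $H_{\alpha n}$ is $H$ shifted by $\pm\alpha n$ according to the checkerboard pattern. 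Each column of $A$ lies in a single block, so the column analysis is a direct application of Lemma~\ref{lemma:main}(1). Each \emph{row}, however, runs through all $n/t$ blocks, and the paper does \emph{not} cut it at a central entry into two halves with an $n\bmod 4$ case split; it shows that the partial sums coming from block $\alpha$ and block $\alpha'$ are already separated by their residues modulo $v/t$ whenever $|\alpha-\alpha'|>1$, and then handles the adjacent-block case $|\alpha-\alpha'|=1$ by a short congruence argument. Your planned two-piece row decomposition does not match this $n/t$-block structure, and for $n/t>2$ it is unclear how a single central split could organise the partial sums.
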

\begin{proof}
Let $n$ and $t$ be as in the statement and set $v=2n^2+t$.
Let $H$ be the $n \times t$ array with elements in $\Z_v$ whose columns are so defined:
for $j$ odd, 
set
\begin{eqnarray}
\nonumber C_j &=& \left((j-1)\frac{v}{t}+1, j\frac{v}{t}-2, (j-1)\frac{v}{t}+3, j\frac{v}{t}-4,\ldots, j\frac{v}{t}-(n-1),\right. \\
\nonumber  & & \left. (j-1)\frac{v}{t}+n\right),
\end{eqnarray}
for $j$ even, 
set
\begin{eqnarray}
\nonumber C_j &=& \left(-(j-1)\frac{v}{t}-1, -j\frac{v}{t}+2, -(j-1)\frac{v}{t}-3, -j\frac{v}{t}+4,\ldots, -j\frac{v}{t}+(n-1),\right. \\
\nonumber  & & \left. -(j-1)\frac{v}{t}-n\right).
\end{eqnarray}

For $\alpha \in \left[1, \frac{n}{t}-1\right]$, let $H_{\alpha n}$ be the matrix obtained from $H$ by adding
$\varepsilon \alpha n$ to the elements of $H$, where $\varepsilon =1$ for the elements of $H$ in the positions $(i,j)$
with $i \equiv j\pmod 2$, $\varepsilon =-1$ for the elements of $H$ in the positions $(i,j)$
with $i \not\equiv j\pmod 2$.
Now let $A$ be $n \times n$ array so constructed:
$$\begin{array}{|r|r|r|r|r|r|r|}\hline
H & -H_{n} & H_{2n} & -H_{3n} & H_{4n} & \ldots & H_{(\frac{n}{t}-1)n}\\ \hline
\end{array}
$$
where if $t=n$, $A=H$ is understood.
To prove that $supp(A)$ is the required one, it is useful to observe that, since the elements belong to $\Z_v$, some columns of $H$ can be written also in the following 
equivalent expression: 
if $j$ odd and $j\in \left[\frac{t+3}{2},t\right]$ then 
\begin{eqnarray}
\nonumber C_j &=& \left((j-t-1)\frac{v}{t}+1,(j-t)\frac{v}{t}-2, (j-t-1)\frac{v}{t}+3, (j-t)\frac{v}{t}-4,\ldots, \right. \\
\nonumber  & & \left. (j-t)\frac{v}{t}-(n-1),(j-t-1)\frac{v}{t}+n\right),
\end{eqnarray}
while if $j$ even and $j\in \left[\frac{t+3}{2},t\right]$ then
\begin{eqnarray}
\nonumber C_j &=& \left((t-j+1)\frac{v}{t}-1,(t-j)\frac{v}{t}+2, (t-j+1)\frac{v}{t}-3, (t-j)\frac{v}{t}+4,\ldots, \right. \\
\nonumber  & & \left. (t-j)\frac{v}{t}+(n-1),(t-j+1)\frac{v}{t}-n\right).
\end{eqnarray}
Now, it is not hard to see that for any $j\in \left[0, \frac{t-3}{2} \right]$ it results
$$supp(C_{j+1} \cup C_{t-j})=\left[j \frac{v}{t}+1, j\frac{v}{t}+n\right]\cup \left[(j+1) \frac{v}{t}-n, (j+1)\frac{v}{t}-1\right]$$
also
$$supp\left(C_{\frac{t+1}{2}}\right)=\left[ \frac{t-1}{2}\cdot \frac{v}{t}+1, \frac{t-1}{2}\cdot \frac{v}{t}+n\right].$$
By the definition of the arrays $H_{\alpha n}$, it follows that
$$supp(H_{\alpha n})=\left(\left[j \frac{v}{t}+1, j\frac{v}{t}+n\right]+\alpha n\right) \cup
\left(\left[(j+1) \frac{v}{t}-n, (j+1)\frac{v}{t}-1\right] -\alpha n\right) \cup$$
$$\cup\left(\left[ \frac{t-1}{2}\cdot \frac{v}{t}+1, \frac{t-1}{2}\cdot \frac{v}{t}+n\right] + \alpha n\right)$$
where $j\in \left[0, \frac{t-3}{2} \right]$.
This implies that $supp(A)=\left[1, \frac{v-1}{2} \right]\setminus\left\{\frac{v}{t},2\frac{v}{t}, \ldots, \frac{t-1}{2}\frac{v}{t}\right\}$.

Now we consider the partial sums of the columns of $A$. If for every odd $j \in [1,t]$ we apply Lemma \ref{lemma:main}(1) to the $j$-th column of $A$,
with $a = (j-1)\frac{v}{t} + 1 $, $b = j\frac{v}{t}-2$, $g = 2$ and $\ell = \frac{n-1}{2}$, we get $S(C_j) = X_j \cup Y_j$, where:
\[
\begin{aligned}
X_j &= \left\{ k\left[(2j-1)\frac{v}{t}-1\right]  :k \in \left[1,\frac{n-1}{2}\right] \right\}, \\
Y_j &= \left\{ (j-1)\frac{v}{t} + 1 + k \left[(2j-1)\frac{v}{t}+1\right]: k \in \left[0,\frac{n-1}{2}\right]\right\}.
\end{aligned}
\]
It is easy to see that these are sets containing non-zero elements, and that they are disjoint as their elements belong to different classes modulo $\frac{v}{t}$.

Similarly, for every even $j \in [1,t]$ Lemma \ref{lemma:main}(1) can be applied to the $j$-th column of $A$ with $a = -(j-1)\frac{v}{t}-1$, $b = -j\frac{v}{t}+2$, $g = -2$ and $\ell = \frac{n-1}{2}$, obtaining $S(C_j) = X_j \cup Y_j$, where:
\[
\begin{aligned}
X_j &= \left\{ k\left[(1-2j)\frac{v}{t}+1\right]  :k \in \left[1,\frac{n-1}{2}\right] \right\}, \\
Y_j &= \left\{ -(j-1)\frac{v}{t} - 1 + k \left[(1-2j)\frac{v}{t}-1\right]: k \in \left[0,\frac{n-1}{2}\right]\right\}.
\end{aligned}
\]
With the very same argument it can be shown that $X_j$ and $Y_j$ are disjoint sets not containing zero.

One can directly check that, for $\alpha \in \left[1, \frac{n}{t}-1\right]$ and for every $j \in [1,t]$
 the partial sums of  the $j$-th column of $H_{\alpha n}$ are given by $X_{j,\alpha}  \cup Y_{j,\alpha} $ where:
$$X_{j,\alpha}=X_j \quad \textrm{and}\quad Y_{j,\alpha}=Y_j+(-1)^{j+1}\alpha n .$$
It can then be seen  that the sets $X_{j,\alpha}$ and $Y_{j,\alpha}$ are disjoint and contain only non-zero elements.

Hence each column of $A$ admits a simple natural ordering.

In order to prove that the same property holds also for the rows of $A$, we start by observing that the rows of the array $H$ are the following ones.
For every odd $i\in [1,n]$ 
\begin{eqnarray}
\nonumber R_i &=& \Big(i, -\frac{v}{t}-i,2\frac{v}{t}+i, -3\frac{v}{t}-i,\ldots,(t-3)\frac{v}{t}+i, -(t-2)\frac{v}{t}-i, (t-1)\frac{v}{t}+i\Big),
\end{eqnarray}
and for every even $i\in[1,n]$
\begin{eqnarray}
\nonumber R_i &=& \Big(\frac{v}{t}-i, -2\frac{v}{t}+i,3\frac{v}{t}-i, -4\frac{v}{t}+i,\ldots,(t-2)\frac{v}{t}-i, -(t-1)\frac{v}{t}+i,-i\Big).
\end{eqnarray}
For every odd $i\in [1,n]$ we can apply Lemma \ref{lemma:main}(1) with $a=i$, $b=-\frac{v}{t}-i$, $g=2\frac{v}{t}$ and $\ell=\frac{t-1}{2}$, obtaining 
$S(R_i)=X \cup Y_i$ where:
\[
\begin{aligned}
X &= \left\{ -\frac{v}{t}k :k \in \left[1,\frac{t-1}{2}\right] \right\}, \\
Y_i &= \left\{i+k\frac{v}{t}: k \in \left[0,\frac{t-1}{2}\right]\right\}.
\end{aligned}
\]
Analogously, for every even $i\in [1,n]$ we can apply Lemma \ref{lemma:main}(1) with $a=\frac{v}{t}-i$, $b=-2\frac{v}{t}+i$, $g=2\frac{v}{t}$ and $\ell=\frac{t-1}{2}$, obtaining
$S(R_i)=X \cup Y_i$ where:
\[
\begin{aligned}
X &= \left\{ -\frac{v}{t}k :k \in \left[1,\frac{t-1}{2}\right] \right\}, \\
Y_i &= \left\{\frac{v}{t}-i+k\frac{v}{t}: k \in \left[0,\frac{t-1}{2}\right]\right\}.
\end{aligned}
\]
It is immediate to see that, in both cases, $X$ and $Y_i$ are disjoint sets non containing zero.

One can directly check that, for $\alpha \in \left[1, \frac{n}{t}-1\right]$ and
for every $i \in [1,n]$ the partial sums of  the $i$-th row of $H_{\alpha n}$ are given by $X_\alpha \cup Y_{i,\alpha}$ where:

$$X_\alpha=X \quad \text{and} \quad Y_{i,\alpha}=Y_i+(-1)^{i+1}\alpha n.$$
The sets are disjoint and do not contain zero.

Now,  let  $\alpha_1,\alpha_2 \in [0,\frac{n}{t}-1]$ be distinct integers, for any $i \in [1,n]$ and  for $j \in \{1,2\}$ denote by 
$R_{i,\alpha_j} = (a_{i, 1+\alpha_j t}, \, a_{i, 2+\alpha_j t}, \, \dotsc, a_{i, t+\alpha_j t})$ the elements in the $i$-th row of $A$ corresponding to the block $(-1)^\alpha H_{\alpha_j n}$, where $H_0 = H$ is understood. 
By $S(R_{i,\alpha_j})$ we mean the list of the partial sums of $R_i$ corresponding to the subsequence $R_{i,\alpha_j}$,
that is $\{s_a \mid a\in [1+\alpha_j t,t+\alpha_j t] \}$.
To prove that the  rows of $A$ admit a simple natural ordering, we have to show that  $S(R_{i,\alpha_1}) \cap S(R_{i,\alpha_2})= \emptyset$ 
 for every $i\in[1,n]$ and for every choice of $\alpha_1, \, \alpha_2\in [0,\frac{n}{t}-1]$ with $\alpha_1\neq \alpha_2$.

As a first step, we notice that it suffices to control the aforementioned property for $|\alpha_1 - \alpha_2| = 1$. 
In fact, the $i$-th row of $A$ written modulo $\frac{v}{t}$ is:
\[
\begin{aligned} 
R_i \equiv (-1)^{i+1} \big( &\underbrace{i, -i, i, \dotsc, i}_{\text{$t$ terms}}, \underbrace{-(n+i), n+i,  \dotsc, -(n+i)}_{\text{$t$\ terms}},\\& \underbrace{2n+i,-(2n+i), \dotsc, 2n+i}_{\text{$t$\ terms}},\underbrace{-(3n+i), 3n+i,  \dotsc, -(3n+i)}_{\text{$t$\ terms}}, \dotsc,\\ 
& \underbrace{\left(\frac{n}{t}-1\right)n+i,-\left(\left(\frac{n}{t}-1\right)n+i\right), \dotsc, \left(\frac{n}{t}-1\right)n+i}_{\text{$t$\ terms}} \big) ,
\end{aligned}
\]
hence its ordered list of partial sums, considered modulo $\frac{v}{t}$, is given by:
\[
\begin{aligned} 
 (-1)^{i+1} \big( &\underbrace{i, 0, i, \dotsc, i}_{\text{$t$ terms}}, \underbrace{-n, i,  \dotsc, -n}_{\text{$t$\ terms}},
 \underbrace{n+i,-n, \dotsc, n+i}_{\text{$t$ terms}}, \underbrace{-2n, n+i,  \dotsc, -2n}_{\text{$t$ terms}}, \dotsc, \\ 
&\underbrace{\frac{1}{2}\left(\frac{n}{t}-1\right)n+i,-\frac{1}{2}\left(\frac{n}{t}-1\right)n, \dotsc, \frac{1}{2}\left(\frac{n}{t}-1\right)n+i}_{\text{$t$ terms}} \big) .
\end{aligned}
\]
In particular, it can be seen that for every $\alpha \in [0,\frac{n}{t}-1]$ the partial sums of $R_{i,\alpha}$ modulo $\frac{v}{t}$ are:
\[
	\begin{aligned}
	&  (-1)^{i+1}\left( i + \frac{\alpha-1}{2}\right) &  \text{ and } &  (-1)^{i} \frac{\alpha+1}{2}n & \text{for $\alpha$ odd},\\
	&  (-1)^{i+1}\left(i + \frac{\alpha}{2}\right) & \text{ and } &  (-1)^{i}\frac{\alpha}{2}n & \text{for $\alpha$ even}.\\
	\end{aligned}
\]
We can then conclude that for $|\alpha_1 - \alpha_2| > 1$ the partial sums of the $i$-th row of $A$ corresponding to $R_{i,\alpha_1}$  and $R_{i,\alpha_2}$ are trivially disjoint. In the following  for any $i \in [1,n]$ and $\alpha \in \left[0,\frac{n}{t}-1\right]$ set $c=\sum_{j=1}^{\alpha t}a_{i,j}$.
Fix now an odd $i \in [1,n]$, let $\alpha \in \left[0,\frac{n}{t}-1\right]$ be an even integer,
 we have that $S(R_{i,\alpha})  = X_\alpha \cup Y_{i,\alpha}$, where:
\[
\begin{aligned}
X_\alpha &= \left\{ c- k \frac{v}{t} : k \in \left[1, \frac{t-1}{2}\right]  \right\}, \\
Y_{i,\alpha} &= \left\{ c + i +\alpha n +k \frac{v}{t}: k \in \left[0, \frac{t-1}{2} \right] \right\}. \\
\end{aligned}
\]
 It is then not hard to see that $S(R_{i,\alpha+1})  = (X_{\alpha+1} \cup Y_{i,\alpha+1})+(c+i+\alpha n + \frac{t-1}{2}\frac{v}{t})$, where:
\[
\begin{aligned}
X_{\alpha+1} &= \left\{  k \frac{v}{t} : k \in \left[1, \frac{t-1}{2}\right]  \right\}, \\
Y_{i,\alpha+1} &= \left\{ - i -(\alpha+1) n -k \frac{v}{t}: k \in \left[0, \frac{t-1}{2} \right] \right\}. \\
\end{aligned}
\]
We have only to check that $Y_{i,\alpha} \cap \left(X_{\alpha+1} + (c+i+\alpha n + \frac{t-1}{2}\frac{v}{t})\right) = \emptyset $. Assume by contradiction that there exist $k_1 \in  \left[0, \frac{t-1}{2} \right]$, $k_2 \in \left[1, \frac{t-1}{2} \right]$ and an odd $i$ such that:
\[
c + i +\alpha n +k_1 \frac{v}{t} \equiv k_2 \frac{v}{t} + c+i+\alpha n + \frac{t-1}{2}\cdot \frac{v}{t} \pmod{v}.
\]
This can be rewritten as:
\[
\left( \frac{t-1}{2}+k_2-k_1 \right) \frac{v}{t} \equiv 0 \pmod{v}.
\]
It can then be easily seen that this equation cannot hold, as $k_2 - k_1 \in \left[\frac{3-t}{2},\frac{t-1}{2} \right]$.

Now, let $\alpha \in [0,\frac{n}{t}-1]$ be an odd integer; similarly, we have that $S(R_{i,\alpha})  = X_\alpha \cup Y_{i,\alpha}$, where:
\[
\begin{aligned}
X_\alpha &= \left\{ c +k \frac{v}{t} : k \in \left[1, \frac{t-1}{2}\right]  \right\}, \\
Y_{i,\alpha} &= \left\{ c - i -\alpha n -k \frac{v}{t}: k \in \left[0, \frac{t-1}{2} \right] \right\}. \\
\end{aligned}
\]
 It can now be seen that $S(R_{i,\alpha+1})  = (X_{\alpha+1} \cup Y_{i,\alpha+1})+(c-i-\alpha n - \frac{t-1}{2}\frac{v}{t})$, where:
\[
\begin{aligned}
X_{\alpha+1} &= \left\{  -k \frac{v}{t} : k \in \left[1, \frac{t-1}{2}\right]  \right\}, \\
Y_{i,\alpha+1} &= \left\{  i +(\alpha+1) n +k \frac{v}{t}: k \in \left[0, \frac{t-1}{2} \right] \right\}. \\
\end{aligned}
\]
As before, it is sufficient to check that $Y_{i,\alpha} \cap \left(X_{\alpha+1}+(c-i-\alpha n - \frac{t-1}{2}\frac{v}{t})\right) = \emptyset$. Assume by contradiction  that there exist $k_1 \in  \left[0, \frac{t-1}{2} \right]$, $k_2 \in \left[1, \frac{t-1}{2} \right]$ and an odd $i$ such that:
\[
c - i -\alpha n -k_1 \frac{v}{t} \equiv -k_2 \frac{v}{t} + c-i-\alpha n - \frac{t-1}{2}\cdot \frac{v}{t} \pmod{v}.
\]
This can be rewritten as:
\[
\left(\frac{t-1}{2} + k_2 - k_1 \right) \frac{v}{t}  \equiv 0 \pmod{v}.
\]
With the very same argument used before, it can be seen that this equation is never verified, obtaining a contradiction.

Assume now that $i \in [1,n]$ is even, and consider any even  integer $\alpha \in \left[0, \frac{n}{t}-1\right]$. 
We have that $S(R_{i,\alpha})  = X_\alpha \cup Y_{i,\alpha}$, where:
\[
\begin{aligned}
X_\alpha &= \left\{ c- k \frac{v}{t} : k \in \left[1, \frac{t-1}{2}\right]  \right\}, \\
Y_{i,\alpha} &= \left\{ c +\frac{v}{t}- i -\alpha n +k \frac{v}{t}: k \in \left[0, \frac{t-1}{2} \right] \right\}. \\
\end{aligned}
\]
We have then that  
$S(R_{\alpha+1})  = (X_{\alpha+1} \cup Y_{i,\alpha+1})+(c+\frac{v}{t}- i -\alpha n + \frac{t-1}{2}\frac{v}{t})$, where:
\[
\begin{aligned}
X_{\alpha+1} &= \left\{  k \frac{v}{t} : k \in \left[1, \frac{t-1}{2}\right]  \right\}, \\
Y_{i,\alpha+1} &= \left\{ -\frac{v}{t}+ i +(\alpha+1) n -k \frac{v}{t}: k \in \left[0, \frac{t-1}{2} \right] \right\}.\\
\end{aligned}
\]
We need to check $Y_{i,\alpha} \cap \left(X_{\alpha+1}+(c+\frac{v}{t}- i -\alpha n + \frac{t-1}{2}\frac{v}{t})\right) = \emptyset$; assume by contradiction  that there exist $k_1 \in  \left[0, \frac{t-1}{2} \right]$, $k_2 \in \left[1, \frac{t-1}{2} \right]$ and an even $i$ such that:
\[
c +\frac{v}{t}- i -\alpha n +k_1 \frac{v}{t} \equiv  k_2 \frac{v}{t} +c+ \frac{v}{t}- i -\alpha n + \frac{t-1}{2}\cdot\frac{v}{t} \pmod{v}.
\]
This is equivalent to:
\[
\left(\frac{t-1}{2} + k_2 - k_1 \right) \frac{v}{t}  \equiv 0 \pmod{v}.
\]
It is easy to see that this is never the case.

With the very same argument, it can be shown that for every even $i$, the partial sums of $R_{i,\alpha}$ and $R_{i,\alpha+1}$ are distinct 
for every odd $\alpha$.
\end{proof}

\begin{rem}\label{rem:sum_t}
From the globally simple array $A$ constructed in Theorem \ref{thm:t}, one can deduce the total sum of each row and of each column. Let $n,t \geq 1$ be odd integers with $t$ dividing $n$, then for every $j \in [1,t]$ and every $\alpha  \in \left[0, \frac{n}{t}-1\right]$ we have:
\[
\sum_{a_{i,j} \in C_{j+\alpha t}} a_{i,j} = (-1)^{\alpha + j+1}\left[  (j-1)\frac{v}{t} + 1+ \alpha n + \frac{n-1}{2} \left[(2j-1)\frac{v}{t}+1\right] \right].
\]
We now focus on the rows of $A$. Consider any odd $i \in [1,n]$, it can be seen that for every $\alpha \in \left[0, \frac{n}{t}-1\right]$ we have:
\[
\sum_{j \in [1+\alpha t, t+\alpha t]} a_{i,j} = (-1)^{\alpha}\left( i + \alpha n + \frac{t-1}{2}\cdot \frac{v}{t}\right);
\] 
hence the sum of the elements  in the $i$-th row of $A$, for $i$ odd, is:
\[
\sum_{\alpha \in \left[0, \frac{n}{t}-1\right]} (-1)^{\alpha}\left( i + \alpha n + \frac{t-1}{2}\cdot \frac{v}{t}\right).
\]
Thus, it is not hard to see that for every odd $i \in [1,n]$:
\[
\sum_{a_{i,j} \in R_i} a_{i,j} = i+n \left(\frac{n}{t}-1\right) \cdot \frac{1}{2}  + \frac{t-1}{2}\cdot \frac{v}{t}.
\]
Similarly, it can be seen that for every even $i \in [1,n]$ and for every $\alpha \in \left[0, \frac{n}{t}-1\right]$:
\[
\sum_{j \in [1+\alpha t, t+\alpha t]} a_{i,j} = (-1)^{\alpha}\left( \frac{v}{t} - i - \alpha n + \frac{t-1}{2}\cdot \frac{v}{t}\right).
\] 
From which we obtain that for every even $i\in [1,n]$:
\[
\sum_{a_{i,j} \in R_i} a_{i,j} = \frac{v}{t}-i -n \left(\frac{n}{t}-1\right) \cdot \frac{1}{2}  + \frac{t-1}{2}\cdot \frac{v}{t}.
\]
\end{rem}

\begin{ex}
Below we have an $\N\H_{11}(11;11)$, say $A$, whose elements belong to $\Z_{253}$. Following the notation of
the proof of Theorem \ref{thm:t}, we are in the case $t=n$ hence $A=H$.
\begin{center}
\begin{footnotesize}
$\begin{array}{|r|r|r|r|r|r|r|r|r|r|r|}\hline
1 & -24 & 47 & -70 & 93 & -116 & -114 & 91 & -68 & 45 & -22\\ \hline
21 & -44 & 67 & -90 & 113 & 117 & -94 & 71 & -48 & 25 & -2\\ \hline
3 & -26 & 49 & -72 & 95 & -118 & -112 & 89 & -66 & 43 & -20\\ \hline
19 & -42 & 65 & -88 & 111 & 119 & -96 & 73 & -50 & 27 & -4\\ \hline
5 & -28 & 51 & -74 & 97 & -120 & -110 & 87 & -64 & 41 & -18\\ \hline
17 & -40 & 63 & -86 & 109 & 121 & -98 & 75 & -52 & 29 & -6\\ \hline
7 & -30 & 53 & -76 & 99 & -122 & -108 & 85 & -62 & 39 & -16\\ \hline
15 & -38 & 61 & -84 & 107 & 123 & -100 & 77 & -54 & 31 & -8\\ \hline
9 & -32 & 55 & -78 & 101 & -124 & -106 & 83 & -60 & 37 & -14\\ \hline
13 & -36 & 59 & -82 & 105 & 125 & -102 & 79 & -56 & 33 & -10\\ \hline
11 & -34 & 57 & -80 & 103 & -126 & -104 & 81 & -58 & 35 & -12\\ \hline
\end{array}$
\end{footnotesize}
\end{center}

\end{ex}

\begin{ex}
Here we take $n=15$, $t=3,5$ and we follow the proof of Theorem \ref{thm:t}.
An $\N\H_3(15;15)$, say $A$, has elements in $\Z_{453}$, $\frac{v}{t}=151$ and
we have 
$$A=\begin{array}{|r|r|r|r|r|}\hline
H & -H_{15} & H_{30} & -H_{45} & H_{60} \\ \hline
\end{array}$$

\begin{center}
\begin{tiny}
$\begin{array}{|r|r|r|r|r|r|r|r|r|r|r|r|r|r|r|}\hline
1 & -152 & -150 & -16 & 167 & 135 & 31 & -182 & -120 & -46 & 197 & 105 & 61 & -212 & -90\\ \hline
149 & 153 & -2 & -134 & -168 & 17 & 119 & 183 & -32 & -104 & -198 & 47 & 89 & 213 & -62\\ \hline
3 & -154 & -148 & -18 & 169 & 133 & 33 & -184 & -118 & -48 & 199 & 103 & 63 & -214 & -88\\ \hline
147 & 155 & -4 & -132 & -170 & 19 & 117 & 185 & -34 & -102 & -200 & 49 & 87 & 215 & -64\\ \hline
5 & -156 & -146 & -20 & 171 & 131 & 35 & -186 & -116 & -50 & 201 & 101 & 65 & -216 & -86\\ \hline
145 & 157 & -6 & -130 & -172 & 21 & 115 & 187 & -36 & -100 & -202 & 51 & 85 & 217 & -66\\ \hline
7 & -158 & -144 & -22 & 173 & 129 & 37 & -188 & -114 & -52 & 203 & 99 & 67 & -218 & -84\\ \hline
143 & 159 & -8 & -128 & -174 & 23 & 113 & 189 & -38 & -98 & -204 & 53 & 83 & 219 & -68\\ \hline
9 & -160 & -142 & -24 & 175 & 127 & 39 & -190 & -112 & -54 & 205 & 97 & 69 & -220 & -82\\ \hline
141 & 161 & -10 & -126 & -176 & 25 & 111 & 191 & -40 & -96 & -206 & 55 & 81 & 221 & -70\\ \hline
11 & -162 & -140 & -26 & 177 & 125 & 41 & -192 & -110 & -56 & 207 & 95 & 71 & -222 & -80\\ \hline
139 & 163 & -12 & -124 & -178 & 27 & 109 & 193 & -42 & -94 & -208 & 57 & 79 & 223 & -72\\ \hline
13 & -164 & -138 & -28 & 179 & 123 & 43 & -194 & -108 & -58 & 209 & 93 & 73 & -224 & -78\\ \hline
137 & 165 & -14 & -122 & -180 & 29 & 107 & 195 & -44 & -92 & -210 & 59 & 77 & 225 & -74\\ \hline
15 & -166 & -136 & -30 & 181 & 121 & 45 & -196 & -106 & -60 & 211 & 91 & 75 & -226 & -76\\ \hline
\end{array}$
\end{tiny}
\end{center}

An $\N\H_5(15;15)$, say $A$, has elements in $\Z_{455}$, $\frac{v}{t}=91$ and
we have $$A=\begin{array}{|r|r|r|}\hline
H & -H_{15} & H_{30} \\ \hline
\end{array}$$

\begin{center}
\begin{tiny}
$\begin{array}{|r|r|r|r|r|r|r|r|r|r|r|r|r|r|r|}\hline
1 & -92 & 183 & 181 & -90 & -16 & 107 & -198 & -166 & 75 & 31 & -122 & 213 & 151 & -60\\ \hline
89 & -180 & -184 & 93 & -2 & -74 & 165 & 199 & -108 & 17 & 59 & -150 & -214 & 123 & -32\\ \hline
3 & -94 & 185 & 179 & -88 & -18 & 109 & -200 & -164 & 73 & 33 & -124 & 215 & 149 & -58\\ \hline
87 & -178 & -186 & 95 & -4 & -72 & 163 & 201 & -110 & 19 & 57 & -148 & -216 & 125 & -34\\ \hline
5 & -96 & 187 & 177 & -86 & -20 & 111 & -202 & -162 & 71 & 35 & -126 & 217 & 147 & -56\\ \hline
85 & -176 & -188 & 97 & -6 & -70 & 161 & 203 & -112 & 21 & 55 & -146 & -218 & 127 & -36\\ \hline
7 & -98 & 189 & 175 & -84 & -22 & 113 & -204 & -160 & 69 & 37 & -128 & 219 & 145 & -54\\ \hline
83 & -174 & -190 & 99 & -8 & -68 & 159 & 205 & -114 & 23 & 53 & -144 & -220 & 129 & -38\\ \hline
9 & -100 & 191 & 173 & -82 & -24 & 115 & -206 & -158 & 67 & 39 & -130 & 221 & 143 & -52\\ \hline
81 & -172 & -192 & 101 & -10 & -66 & 157 & 207 & -116 & 25 & 51 & -142 & -222 & 131 & -40\\ \hline
11 & -102 & 193 & 171 & -80 & -26 & 117 & -208 & -156 & 65 & 41 & -132 & 223 & 141 & -50\\ \hline
79 & -170 & -194 & 103 & -12 & -64 & 155 & 209 & -118 & 27 & 49 & -140 & -224 & 133 & -42\\ \hline
13 & -104 & 195 & 169 & -78 & -28 & 119 & -210 & -154 & 63 & 43 & -134 & 225 & 139 & -48\\ \hline
77 & -168 & -196 & 105 & -14 & -62 & 153 & 211 & -120 & 29 & 47 & -138 & -226 & 135 & -44\\ \hline
15 & -106 & 197 & 167 & -76 & -30 & 121 & -212 & -152 & 61 & 45 & -136 & 227 & 137 & -46\\ \hline
\end{array}$
\end{tiny}
\end{center}
\end{ex}

\subsection{Globally simple $\N\H_t(n;n)$, for every $n$ prime and admissible $t$}
We start with an easy remark for $n=2$.

\begin{rem}\label{rem:n=2}
Note that the existence of a globally simple $\N\H_t(2;2)$ for any admissible $t$ is trivial.
Below we give an example for each possible case, that is for $t=1,2,4,8$
(the first array is both an $\N\H(2;2)$ and an $\N\H_2(2;2)$):
$$\begin{array}{|r|r|}\hline
 1 & 2 \\\hline
3 & 4\\ \hline
\end{array}
\quad\quad
\begin{array}{|r|r|}\hline
 1 & 2 \\\hline
4 & 5 \\\hline
\end{array}
\quad\quad
\begin{array}{|r|r|}\hline
 1 & 3 \\\hline
5 & 7 \\\hline
\end{array}$$
\end{rem}

Now we can present a complete solution whenever $n$ is a prime.

\begin{thm}\label{thm:prime}
Let $n$ be a prime. There exists a globally simple $\N\H_t(n;n)$ for every admissible $t$.
\end{thm}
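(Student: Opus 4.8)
The plan is to observe that Theorem \ref{thm:prime} is a direct consequence of the constructions carried out earlier in this section, once one enumerates the admissible values of the parameter $t$. Since we are dealing with an $n \times n$ array all of whose rows and columns have weight $n$ and with $\lambda = 1$, admissibility of $t$ means precisely that $t \mid 2n^2$. So the first step is to list the divisors of $2n^2$ under the hypothesis that $n$ is prime.

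I would then split into two cases. If $n = 2$, then $2n^2 = 8$, so the admissible values are $t \in \{1,2,4,8\}$, and each of these is realised by the explicit $2\times 2$ globally simple arrays exhibited in Remark \ref{rem:n=2}. If instead $n$ is an odd prime, then the only divisors of $2n^2$ are $1, 2, n, 2n, n^2, 2n^2$.

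The second step is to assign each of these six values to the appropriate earlier result. The values $t = 1$ and $t = n$ are divisors of $n$, so a globally simple $\N\H_t(n;n)$ exists by Theorem \ref{thm:t}. The case $t = 2$ is covered by Proposition \ref{prop:2}, the case $t = 2n$ by Proposition \ref{prop:2n}, the case $t = n^2$ by Proposition \ref{prop:n2}, and the case $t = 2n^2$ by Proposition \ref{prop:2n2}. Since these six values exhaust all admissible $t$, the proof is complete.

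There is no serious obstacle here: the only point requiring care is to make sure the divisor enumeration is exhaustive, i.e.\ that primality of $n$ really does restrict the divisors of $2n^2$ to the six listed values, and that the degenerate prime $n = 2$ (where $n$ fails to be odd, so Theorem \ref{thm:t} and Propositions \ref{prop:2n}--\ref{prop:2n2} do not apply) is handled separately via Remark \ref{rem:n=2}. This theorem is thus essentially a bookkeeping corollary collecting the constructions of the section.
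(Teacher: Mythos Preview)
Your proposal is correct and follows essentially the same bookkeeping argument as the paper. One small difference: the paper handles $t=1$ by citing the external result \cite[Theorem~5.5]{CDFP}, whereas you cover it via Theorem~\ref{thm:t} (noting $1\mid n$), which is slightly tidier since it keeps the proof self-contained; your explicit separation of the case $n=2$ also matches the paper's use of Remark~\ref{rem:n=2}.
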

\begin{proof}
Since $n$ is a prime the admissible values for $t$ are $1,2,n,2n,n^2,2n^2$.
For $t=1$ the result is contained in Theorem 5.5 of \cite{CDFP}.
For $t \neq 1$ the result follows by Remark \ref{rem:n=2} and by Propositions \ref{prop:2},
\ref{prop:2n}, \ref{prop:n2}, \ref{prop:2n2} and Theorem \ref{thm:t}.
\end{proof}

\section{Connection with path-decompositions}

Simple non-zero sum Heffter arrays are a useful tool for constructing orthogonal path decompositions.
To present this connection we introduce some notations and we recall some basic definitions.
Given a graph $\Gamma$, by $V(\G)$ and $E(\G)$ we denote its vertex-set and its edge-set, respectively.
By $K_v, K_{q \times r}$ and $P_k$ we denote the complete graph on $v$ vertices, the complete multipartite
graph with $q$ parts each of size $r$
and the path of length $k$ (with $k+1$ distinct vertices), respectively. We recall that given
a subgraph $\G$ of a graph $K$, a $\G$-decomposition of $K$ is a set of graphs, called blocks, all isomorphic to $\G$
whose edges partition the edge-set of $K$. Also, a $\G$-decomposition $\D$ of a graph $K$ is said to be cyclic if,
up to isomorphisms, $V(K)=\Z_v$ and for any $B \in \D$, $B+1 \in \D$ too.
Two graph decompositions $\D$ and $\D'$ are said orthogonal if for any $B \in \D$ and any $B' \in \D'$, $|E(B) \cap E(B')|\leq 1$.
The connection between non-zero sum Heffter arrays and graph decompositions is explained by the following result.

\begin{prop}[Proposition 2.9, \cite{CDFP}]\label{prop:dec}
Let $A$ be an $\N\H_t(m,n;h,k)$ simple with respect to the orderings $\omega_{R_i}$ for $i\in[1,m]$ and $\omega_{C_j}$ for $j\in[1,n]$. Then:
  \begin{itemize}
    \item[(1)] there exists a cyclic $P_h$-decomposition $\D_{R}$ of $K_{\frac{2mh+t}{t}\times t}$;
    \item[(2)] there exists a cyclic $P_k$-decomposition $\D_{C}$ of $K_{\frac{2nk+t}{t}\times t}$;
    \item[(3)] the decompositions $\D_{R}$ and $\D_{C}$ are orthogonal.
		\end{itemize}
\end{prop}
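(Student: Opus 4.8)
The plan is the standard Heffter-array argument: build $\D_R$ and $\D_C$ explicitly from the partial sums of the given simple orderings. Identify the vertex set of $K_{\frac{2mh+t}{t}\times t}$ with $\Z_v$, where $v=2mh+t=2nk+t$, and take its parts to be the cosets of the subgroup $J\le\Z_v$ of order $t$; with this identification the multiset of differences realised by the edges of $K_{\frac{2mh+t}{t}\times t}$ is exactly $\Z_v\setminus J$. For each $i\in[1,m]$, writing $\omega_{R_i}=(b_1,\dots,b_h)$, set $s_0=0$ and $s_\ell=b_1+\dots+b_\ell$, and let $P_R^i$ be the walk $s_0,s_1,\dots,s_h$. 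Since $\omega_{R_i}$ is simple, the $s_\ell$ are pairwise distinct, so $P_R^i$ is a genuine path with $h$ edges; its $\ell$-th edge $\{s_{\ell-1},s_\ell\}$ has difference $\pm b_\ell\in\Z_v\setminus J$, hence is an edge of $K_{\frac{2mh+t}{t}\times t}$. Put $\D_R=\{P_R^i+g : i\in[1,m],\ g\in\Z_v\}$ and define $\D_C$ analogously from the columns and the orderings $\omega_{C_j}$; the underlying graph is the same, as $2mh=2nk$.

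Next I would verify that $\D_R$ is a $P_h$-decomposition. Every edge of $K_{\frac{2mh+t}{t}\times t}$ has the form $\{x,x+d\}$ with $d\in\Z_v\setminus J$, and by condition $(\mathrm{b_1})$ of Definition~\ref{def:NZS} (with $\lambda=1$) the value $d$ occurs, up to sign, in exactly one cell of $A$. If that cell lies in row $R_i$ and is the $\ell$-th entry of $\omega_{R_i}$, then there is exactly one $g\in\Z_v$ with $\{s_{\ell-1}+g,s_\ell+g\}=\{x,x+d\}$, so exactly one block of $\D_R$ contains this edge; conversely each edge of each block is of this form. Hence the blocks of $\D_R$ partition $E(K_{\frac{2mh+t}{t}\times t})$, and $\D_R$ is cyclic by construction since $B+1\in\D_R$ for all $B\in\D_R$. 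The identical argument applied column-wise gives that $\D_C$ is a cyclic $P_k$-decomposition of $K_{\frac{2nk+t}{t}\times t}$, establishing (1) and (2).

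For (3), let $B=P_R^i+g\in\D_R$ and $B'=P_C^j+g'\in\D_C$, and consider any edge $e\in E(B)\cap E(B')$. Its difference $d_e\in\Z_v\setminus J$ is realised, up to sign, by a unique cell of $A$; since $e\in E(B)$ this cell lies in row $R_i$, and since $e\in E(B')$ it lies in column $C_j$, so it is the cell in position $(i,j)$. As the entries of a fixed row of $A$ are pairwise distinct and (by $(\mathrm{b_1})$ with $\lambda=1$) no two of them are negatives of one another, distinct edges of $B$ have distinct differences up to sign, so the map $e\mapsto(\text{cell carrying }\pm d_e)$ is injective on $E(B)$. Therefore at most one edge of $B$ can map to the single cell $(i,j)$, i.e.\ $|E(B)\cap E(B')|\le 1$, which is orthogonality. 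The only point requiring genuine care is this bookkeeping that assigns each edge of the multipartite graph to a unique cell of $A$ — which is precisely what $(\mathrm{b_1})$ (for uniqueness of the cell) together with simplicity of the orderings (for $P_R^i$, $P_C^j$ being paths) provides; everything else is a routine verification.
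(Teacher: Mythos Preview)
Your proposal is correct and follows exactly the construction the paper describes immediately after stating the proposition (which the paper itself cites from \cite{CDFP} without a formal proof): build the base paths $P_h^i=(0,s_{i,1},\dots,s_{i,h})$ from the partial sums of the simple orderings, develop them cyclically over $\Z_v$, and use condition~$(\mathrm{b_1})$ with $\lambda=1$ to match each edge to a unique cell of $A$. Your orthogonality argument via the unique cell $(i,j)$ is the standard one and is fine.
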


The base blocks of the decompositions of previous proposition are so constructed.
Let $\omega_{R_i}$ be a simple ordering of the $i$-th row $R_i$ of $A$, then consider the walk
$P_h^i=(0,s_{i,1},s_{i,2},\ldots, s_{i,h})$, where by $s_{i,j}$ we denote the $j$-th partial sum of $\omega_{R_i}$.
Since the ordering is simple, the partial sums are pairwise distinct and non-zero, hence $P_h^i$  is a path on $h+1$
distinct vertices. The set $\{P_h^i \mid i \in [1,m]\}$ is a set of base blocks of $\D_{R}$. Reasoning in the same way on the columns, we can construct the base blocks of the $P_k$-decomposition  $\D_{C}$.

The existence results proved in the previous section allow us to get the following results on cyclic path decompositions.

\begin{thm}
For every odd integer $n\geq 1$ and every divisor $t$ of $n$, there exists a pair of
orthogonal cyclic $P_n$-decompositions of $K_{\frac{2n^2+t}{t}\times t}$.
\end{thm}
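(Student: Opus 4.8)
The plan is to deduce this directly from the existence of globally simple $\N\H_t(n;n)$ established in Theorem~\ref{thm:t}, combined with the translation of Proposition~\ref{prop:dec} from simple non-zero sum Heffter arrays to orthogonal cyclic path decompositions.

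First I would fix an odd $n\geq 1$ and a divisor $t$ of $n$, and note that $t$ is admissible for the parameters $(m,n,h,k)=(n,n,n,n)$ with $\lambda=1$, since $t\mid n\mid n^2\mid 2n^2$. Then Theorem~\ref{thm:t} supplies a globally simple $\N\H_t(n;n)$, say $A$. By definition, each row $R_i$ and each column $C_j$ of $A$ admits a simple natural ordering, and a simple natural ordering is in particular a simple ordering; hence $A$ is simple with respect to some choice of orderings $\omega_{R_i}$, $i\in[1,n]$, and $\omega_{C_j}$, $j\in[1,n]$, in the sense required by Proposition~\ref{prop:dec}.

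Next I would invoke Proposition~\ref{prop:dec} with $m=n$ and $h=k=n$. Since $\frac{2mh+t}{t}=\frac{2nk+t}{t}=\frac{2n^2+t}{t}$, part~(1) produces a cyclic $P_n$-decomposition $\D_R$ of $K_{\frac{2n^2+t}{t}\times t}$, part~(2) produces a cyclic $P_n$-decomposition $\D_C$ of the same graph, and part~(3) asserts that $\D_R$ and $\D_C$ are orthogonal. This is precisely the asserted pair of orthogonal cyclic $P_n$-decompositions, which completes the argument.

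Since every step is an immediate application of results already in hand, I do not expect any genuine obstacle: the entire difficulty is concentrated in Theorem~\ref{thm:t}, whose explicit construction and the verification of the globally simple property were the real work. The only minor points to keep straight are matching the parameters of Proposition~\ref{prop:dec} to the square case $m=n$, $h=k=n$ and checking the admissibility of $t$, both of which are routine.
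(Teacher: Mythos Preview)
Your argument is correct and follows exactly the paper's own proof: it simply combines Theorem~\ref{thm:t} with Proposition~\ref{prop:dec} in the square case $m=n$, $h=k=n$. The additional checks you spell out (admissibility of $t$, globally simple implies simple, matching the parameters) are routine and in line with what the paper leaves implicit.
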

\begin{proof}
The result follows by Theorem \ref{thm:t} and Proposition \ref{prop:dec}.
\end{proof}

\begin{prop}
For every odd integer $n\geq 1$ and any $t\in\{2,2n,n^2,2n^2\}$ there exists a pair of orthogonal cyclic $P_{n}$-decompositions
of $K_{\frac{2n^2+t}{t}\times t}$.
\end{prop}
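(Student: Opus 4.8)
The statement is an immediate corollary of the existence results established earlier in Section~\ref{sec:constructions} together with Proposition~\ref{prop:dec}. The plan is to invoke, for each of the four values of $t$, the corresponding construction of a globally simple $\N\H_t(n;n)$, then feed it into the path-decomposition machinery.

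\medskip

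First I would recall that a globally simple non-zero sum Heffter array is, by definition, simple: each row and each column admits a simple natural ordering, which is in particular a simple ordering in the sense of Section~2. Thus Propositions~\ref{prop:2}, \ref{prop:2n}, \ref{prop:n2} and \ref{prop:2n2} furnish, for every odd $n\geq 1$ and each $t\in\{2,2n,n^2,2n^2\}$, a simple $\N\H_t(n;n)$; here one uses the simple orderings $\omega_{R_i}$ and $\omega_{C_j}$ given by the natural orderings starting from $a_{i,1}$ and $a_{1,j}$ respectively, exactly as analysed in those proofs via Lemma~\ref{lemma:main}.

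\medskip

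Next I would apply Proposition~\ref{prop:dec} in the square case $m=n$, $h=k=n$. Part~(1) yields a cyclic $P_n$-decomposition $\D_R$ of $K_{\frac{2n\cdot n+t}{t}\times t}=K_{\frac{2n^2+t}{t}\times t}$, part~(2) yields a cyclic $P_n$-decomposition $\D_C$ of the same graph $K_{\frac{2n^2+t}{t}\times t}$, and part~(3) guarantees that $\D_R$ and $\D_C$ are orthogonal. This is precisely a pair of orthogonal cyclic $P_n$-decompositions of $K_{\frac{2n^2+t}{t}\times t}$, as required.

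\medskip

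There is essentially no obstacle here: the work was already carried out in the construction of the globally simple arrays, and the present statement is a formal consequence. The only point worth a line of care is checking that for a square array the parameter equality $h=k=n$ makes both decompositions live on the \emph{same} complete multipartite graph $K_{\frac{2n^2+t}{t}\times t}$ (so that orthogonality is meaningful), and that $t$ being admissible, i.e. $t\mid 2n^2$, holds for each $t\in\{2,2n,n^2,2n^2\}$ when $n$ is odd --- which is immediate.
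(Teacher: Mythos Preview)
Your proposal is correct and matches the paper's own proof, which simply states that the result follows from Propositions~\ref{prop:2}, \ref{prop:2n}, \ref{prop:n2}, \ref{prop:2n2} and~\ref{prop:dec}. Your added remarks about globally simple implying simple, the square case forcing $h=k=n$, and admissibility of $t$ are all accurate elaborations of what the paper leaves implicit.
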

\begin{proof}
The result follows by Propositions \ref{prop:2}, \ref{prop:2n}, \ref{prop:n2}, \ref{prop:2n2} and  \ref{prop:dec}.
\end{proof}

\begin{rem}
Since Proposition \ref{prop:2} holds for every integer $n$, the previous proposition holds also 
when $n$ is even and $t=2$.
\end{rem}

\section{Connection with biembeddings}
Relative non-zero sum Heffter arrays can be used to construct biembeddings of circuits decompositions on orientable surfaces, as explained in \cite{CDFP}.
In this section, we get new results on biembeddings obtained thanks to the constructions presented in Section 3.
We start by recalling some definitions, see \cite{Moh}.

\begin{defi}\label{defi:embed}
Let $\Gamma$ be a graph endowed with the topology of a $1$-dimensional simplicial complex;
then, an \textit{embedding} of $\Gamma$ in a surface $\Sigma$ is a continuous injective mapping $\phi: \Gamma \rightarrow \Sigma$.
\end{defi}
A connected component of $\Sigma \setminus \phi(\Gamma)$ is called $\phi$-\emph{face}. An embedding $\phi$ is said to be \textit{cellular} if every $\phi$-face is homeomorphic to an open disc.
\begin{defi}
A \emph{biembedding} of two circuit decompositions $\D$ and $\D'$ of a simple graph $\G$ is a face $2$-colorable embedding of $\G$
in which one color class is comprised of the circuits in $\D$ and the other one of the circuits in $\D'$.
\end{defi}

For every edge $e$ of a given graph $\Gamma$, we will consider its two possible directions $e^+$ and $e^-$, and we denote by $\tau$ the involution which swaps $e^+$ and $e^-$ for every $e$. Then, for every vertex $v$ in $\Gamma$, a \textit{local rotation} $\rho_v$ is a cyclic permutation of the edges directed out of $v$. If we choose a local rotation for each vertex of $\Gamma$, then we obtain a rotation of the directed edges of $\G$. We also recall the following result \cite{A,Moh}.

\begin{thm}\label{thm:graph_rot_eq_embed}
A rotation $\rho$ on $\Gamma$ is equivalent to a cellular embedding of $\Gamma$ in an orientable surface. The face boundaries of the embedding corresponding to $\rho$ are the orbits of $\rho \circ \tau$. 
\end{thm}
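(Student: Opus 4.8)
The plan is to establish the two halves of the correspondence between rotation systems and cellular embeddings in orientable surfaces, and then to read off the faces; this is the classical Edmonds--Heffter--Ringel theorem, so the argument is the standard one.

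\textbf{From a rotation to an embedding.} Given a rotation $\rho$, I would build a surface $\Sigma$ as a $2$-dimensional CW-complex whose $1$-skeleton is $\Gamma$, viewed as a $1$-complex, by attaching one $2$-cell along each orbit of the permutation $\rho\circ\tau$ acting on the directed edges of $\Gamma$. The first thing to check is that each such orbit, listed as a sequence of directed edges, is a closed walk in $\Gamma$, so the attaching maps are well defined. Next I would verify that the resulting complex is a closed surface: every undirected edge $e$ occurs in the face walks exactly twice, once through $e^+$ and once through $e^-$, so the link of an interior point of $e$ is a circle; and the link of a vertex $v$ is a single circle because the $2$-cells incident to $v$ are glued around $v$ precisely in the cyclic order prescribed by $\rho_v$. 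Cellularity is immediate since every face is an attached open disc. For orientability, I would orient each $2$-cell by its boundary traversal; along any edge $e$ the two incident face-corners run over $e$ in opposite senses (one as $e^+$, the other as $e^-$), so adjacent $2$-cells receive compatible orientations and $\Sigma$ is orientable.

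\textbf{From an embedding to a rotation, and the identification of faces.} Conversely, given a cellular embedding $\phi\colon\Gamma\to\Sigma$ with $\Sigma$ oriented, a small positively oriented disc neighbourhood of each vertex $v$ induces a cyclic order on the edge-ends at $v$; this is the local rotation $\rho_v$, and collectively they give $\rho$. I would then trace a face boundary of $\phi$: arriving at a vertex along a directed edge and leaving along the next edge in the local rotation is exactly the action of $\rho\circ\tau$, so the $\phi$-faces are precisely the orbits of $\rho\circ\tau$, which is the last assertion of the statement. Finally, for the equivalence, I would note that cutting $\Sigma$ along $\phi(\Gamma)$ decomposes it into exactly the polygons that are glued back together in the first construction, so the two constructions are mutually inverse up to orientation-preserving homeomorphism fixing $\Gamma$; hence a rotation and a cellular embedding in an orientable surface carry the same data.

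The main obstacle is the verification, in the first direction, that the complex obtained by disc-attachment is genuinely a manifold --- specifically that the link of every vertex is a \emph{single} circle and not a disjoint union of several --- which is exactly where one uses that each $\rho_v$ is a single cyclic permutation rather than an arbitrary product of cycles. Everything else is bookkeeping with the fixed-point-free involution $\tau$ and the observation that the two sides of each edge are traversed in opposite directions, which simultaneously gives that every edge lies on two face-corners and that the surface is orientable; an Euler-characteristic count $V-E+F$, with $F$ the number of orbits of $\rho\circ\tau$, then pins down the genus.
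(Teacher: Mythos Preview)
Your outline is the standard Edmonds--Heffter--Ringel argument and is correct as a sketch: build the CW-complex by gluing discs along the orbits of $\rho\circ\tau$, check the links to see it is a surface, and use the opposite traversals of $e^+$ and $e^-$ to get orientability; conversely, read off the local rotations from an oriented neighbourhood of each vertex.

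There is, however, nothing to compare against. The paper does not prove this theorem at all: it is quoted verbatim as a known result, with the attribution ``We also recall the following result \cite{A,Moh}'', and is used as a black box in the proof of Proposition~\ref{prop:GSNZH_biembed}. So your proposal supplies an argument where the paper simply cites the literature; any correct proof would be ``different'' in that trivial sense.
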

Moreover, by knowing the number of faces the genus $g$ of the surface can be obtained from Euler's formula $|V| - |E|+ |F| = 2-2g$,
where $V$, $E$, and $F$ denote the number of vertices, edges and faces determined by the embedding in the surface, respectively.

Let $A=\N\H_t(m,n;h,k)$ be a relative non-zero sum Heffter array.
If for any $i\in [1,m]$ and for any $j \in [1,n]$, the orderings $\omega_{R_i}$
and $\omega_{C_j}$ are simple, we define by $\omega_r=\omega_{R_1}\circ \ldots \circ \omega_{R_m}$
the simple ordering for the rows and by $\omega_c=\omega_{C_1}\circ \ldots \circ \omega_{C_n}$
the simple ordering for the columns. 
 The orderings  $\omega_r$ and  $\omega_c$ are said to be
\emph{compatible} if $\omega_c \circ \omega_r$ is a cycle of length  $|\E(A)|$.  
From two compatible orderings a rotation on $K_{\frac{2nk+t}{t} \times t}$ can be obtained, 
implying by Theorem \ref{thm:graph_rot_eq_embed} the existence of a cellular biembedding of this graph. 
Cellular biembeddings obtained from Heffter arrays are also called \textit{Archdeacon embeddings}, see for instance \cite{C}.

Since the existence results on compatible orderings of \cite{CDP} do not depend on the sum of the elements of a row or of a column of the array,
from Theorem 3.3 of \cite{CDP} we have that an $\N\H_t(n;n)$ admits compatible natural orderings if and only if $n$ is odd.

Let $A$ be an $\N\H_t(m, n; h, k)$, we denote by $\D_{\omega_r^{-1}}$ the cyclic  $P_h$-decomposition of $K_{\frac{2nk+t}{t} \times t}$ obtained
from Proposition \ref{prop:dec} with respect to the orderings $\omega_{R_i}^{-1}$, where the starting element is the last element of $\omega_{R_i}$. We recall that, as remarked in Section 2, if $\omega_{R_i}=(a_{i,1},a_{i,2},\ldots, a_{i,k})$ is simple then 
$\omega_{R_i}^{-1}=(a_{i,k},a_{i,k-1},\ldots, a_{i,1})$ is simple too.
Analogously, by  $\D_{\omega_c}$ we mean the cyclic  $P_k$-decomposition of $K_{\frac{2nk+t}{t} \times t}$ obtained
from Proposition \ref{prop:dec} with respect to the orderings $\omega_{C_j}$.

In \cite{CDFP}, the authors construct a decomposition of $K_v$ into circuits starting from a cyclic path decomposition $\D$ of $K_v$,
with $V(K_v)=\Z_v$, as follows. Given $P=(x_0,x_1,\ldots,x_k) \in \D$, let $\lambda_P$ be the minimum positive integer
such that $\lambda_P(x_k-x_0)=0$ modulo $v$. Then define the circuit:
$$C_P:=\cup_{i=0}^{\lambda_P-1} P+i(x_k-x_0).$$
A decomposition of $K_v$ into circuits is given by $\C(\D):=\{C_P : P\in \D\}$.

Then, we report Theorem 6.4 of \cite{CDFP}, which links cellular biembeddings with non-zero sum Heffter arrays:
\begin{prop}\label{prop:nzsh_biemb}
Let A be a non-zero sum Heffter array $\N\H(m, n; h, k)$ that is simple with respect to the compatible orderings $\omega_r$ and $\omega_c$. Then there exists a cellular biembedding of the circuit decompositions $\C(\D_{\omega_r^{-1}})$ and $\C(\D_{\omega_c})$ of $K_{2nk+1}$ into an orientable surface.
\end{prop}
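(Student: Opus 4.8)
Here $t=1$, so $v:=2nk+1$ and the multiset $\{\pm x\mid x\in A\}$ is exactly $\Z_v\setminus\{0\}$. The plan is to build a $\Z_v$-invariant rotation $\rho$ on $K_v$ whose faces, by Theorem~\ref{thm:graph_rot_eq_embed}, are the two prescribed circuit decompositions. First I would realize $K_v$ on the vertex set $\Z_v$ and label the arc from $a$ to $a+d$ by its \emph{difference} $d\in\Z_v\setminus\{0\}$, so that $\tau$ sends the arc $(a,d)$ to $(a+d,-d)$; a $\Z_v$-invariant rotation is then encoded by a single cyclic permutation $\rho_0$ of $\Z_v\setminus\{0\}$, with $\rho_x(x+\delta)=x+\rho_0(\delta)$ at every vertex $x$. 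Since $A$ is simple for $\omega_r,\omega_c$, the decompositions $\D_{\omega_r^{-1}}$, $\D_{\omega_c}$ and the circuit families $\C(\D_{\omega_r^{-1}})$, $\C(\D_{\omega_c})$ are already available by Proposition~\ref{prop:dec}; the work is to choose $\rho_0$ so that they become the two color classes.

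Viewing $\omega_r=\omega_{R_1}\circ\cdots\circ\omega_{R_m}$ and $\omega_c=\omega_{C_1}\circ\cdots\circ\omega_{C_n}$ as permutations of $\E(A)$ (the products of the cyclic shifts along the rows, respectively the columns) and recalling that $\E(A)$ and $-\E(A)$ partition $\Z_v\setminus\{0\}$, I would set
\[
\rho_0(x)=-\omega_r(x)\quad (x\in\E(A)),\qquad \rho_0(-x)=\omega_c(x)\quad (x\in\E(A)).
\]
Then $\rho_0$ interchanges the two halves $\E(A)$ and $-\E(A)$, and $\rho_0^2$ restricted to $\E(A)$ equals $\omega_c\circ\omega_r$. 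A permutation swapping two sets of equal size is a single full cycle exactly when its square is a single full cycle on one of them; hence $\rho_0$ is a single $(v-1)$-cycle, i.e.\ a legitimate local rotation, precisely when $\omega_c\circ\omega_r$ is a single $nk$-cycle, which is the compatibility hypothesis. In that case Theorem~\ref{thm:graph_rot_eq_embed} gives a cellular embedding of $K_v$ in an orientable surface.

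The crux is the identification of the faces with the circuits. From the formula for $\tau$ one gets $\rho\circ\tau:(a,d)\mapsto(a+d,\phi(d))$ with $\phi(d):=\rho_0(-d)$, so along a face the base vertex accumulates partial sums of the successive differences while the difference itself iterates $\phi$. Now $\phi$ preserves each half, acting as $\omega_c$ on $\E(A)$ and as $d\mapsto-\omega_r(-d)$ on $-\E(A)$; its cycles are therefore the columns of $A$ (inside $\E(A)$) and the negated rows of $A$ (inside $-\E(A)$), which already splits the faces into ``column faces'' and ``row faces''. For a column $C_j$ with $\omega_{C_j}=(c_1,\dots,c_k)$ and partial sums $s_1,\dots,s_k$, the face through the arc $(0,c_1)$ runs through $0,s_1,\dots,s_k$ and then repeats after translating by $\sigma_j:=s_k$; since the $\phi$-cycle has length $k$ and sums to $\sigma_j$, the orbit closes after exactly $k\lambda_P$ steps, where $\lambda_P$ is the order of $\sigma_j$ in $\Z_v$, and a telescoping check shows its edge set is $\bigcup_{i=0}^{\lambda_P-1}(P+i\sigma_j)$ with $P=(0,s_1,\dots,s_k)$, i.e.\ the circuit $C_P\in\C(\D_{\omega_c})$. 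The same computation run on the reversed row orderings matches the faces arising from $\phi$-cycles inside $-\E(A)$ with the members of $\C(\D_{\omega_r^{-1}})$. I expect this bookkeeping — reconciling the $\lambda_P$-fold translation in the definition of $C_P$ with the period of the orbit of $\rho\circ\tau$ once one passes from $\E(A)$ to all of $\Z_v$, and invoking simplicity to ensure each $C_P$ occurs as a single face and that every face arises this way — to be the main technical obstacle.

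Finally, because $\tau$ swaps $\E(A)$ and $-\E(A)$, the two faces meeting along any fixed edge of $K_v$ are one column face and one row face; coloring by these classes exhibits the embedding as a face $2$-colorable one whose color classes are exactly $\C(\D_{\omega_c})$ and $\C(\D_{\omega_r^{-1}})$, that is, a cellular biembedding of these two circuit decompositions of $K_{2nk+1}$. If one also wants the genus of the supporting surface, it follows from Euler's formula once the faces have been counted.
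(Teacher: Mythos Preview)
Your argument is correct and follows the same route as the paper's: the very permutation $\rho_0$ you define coincides with the $\bar\rho_0$ appearing in the proof of Proposition~\ref{prop:GSNZH_biembed} (the relative generalization), and your face computation via $\phi(d)=\rho_0(-d)$ reproduces that proof's orbit analysis. Note that the paper does not itself prove Proposition~\ref{prop:nzsh_biemb}; it quotes it as Theorem~6.4 of \cite{CDFP}, and your write-up is essentially a self-contained version of that cited argument (you even supply the short justification, via $\rho_0^2|_{\E(A)}=\omega_c\circ\omega_r$, for why compatibility forces $\rho_0$ to be a single $(v-1)$-cycle, which the paper leaves to \cite{CDFP}).
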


\begin{rem}
We emphasize that if in previous proposition we remove the hypothesis that the orderings are simple we get again a biembedding
(as shown in Theorem 2.5 of \cite{C}), but now the faces may not necessarily be union of paths of length $h$ or $k$.
\end{rem}

We will now see that Proposition \ref{prop:nzsh_biemb} can  be generalized to the case of a \emph{relative} non-zero sum Heffter array.

\begin{prop}\label{prop:GSNZH_biembed}
Let $A$ be a relative non-zero sum Heffter array $\N\H_t(m,n;h,k)$ that is simple with respect  to  two compatible  orderings $\omega_r$ and $\omega_c$. 
 Then there exists a cellular biembedding of two circuit decompositions $\C(\D_{\omega_r^{-1}})$ and $\C(\D_{\omega_c})$ of $K_{\frac{2nk+t}{t} \times t}$ into an orientable surface.
\end{prop}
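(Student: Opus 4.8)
The plan is to replay the proof of Proposition~\ref{prop:nzsh_biemb} with the complete graph $K_{2nk+1}$ replaced throughout by $\G:=\Cay(\Z_v,\Z_v\setminus J)$, where $v=2nk+t$ and $J$ is the subgroup of $\Z_v$ of order $t$. The point of departure is the observation that $\G$ \emph{is} $K_{\frac{2nk+t}{t}\times t}$: two vertices of $\Z_v$ are adjacent in $\G$ exactly when their difference lies outside $J$, so the $v/t=\frac{2nk+t}{t}$ cosets of $J$ are the parts. Moreover, condition~$(\mathrm{b_1})$ of Definition~\ref{def:NZS} (with $\lambda=1$) says that each $d\in\Z_v\setminus J$ equals $a$ or $-a$ for a unique $a\in\E(A)$; hence, for $a\in\E(A)$, the translate orbit $\{\{z,z+a\}:z\in\Z_v\}$ consists of edges of $\G$, and as $a$ ranges over $\E(A)$ these orbits partition $E(\G)$. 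So $\G$ carries the same free translation action of $\Z_v$, under which the difference of the endpoints of an edge is invariant, that $K_{2nk+1}$ carries in the classical setting, and every piece of Archdeacon's local data makes sense on it verbatim.

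Next I would build the rotation $\rho$ on $\G$ from the compatible orderings $\omega_r$ and $\omega_c$ exactly as for $t=1$: for $a\in\E(A)$ in cell $(i,j)$ let $\sigma_r(a)$ (resp.\ $\sigma_c(a)$) be the successor of $a$ in the cyclic row ordering $\omega_{R_i}$ (resp.\ column ordering $\omega_{C_j}$), and define $\rho_z$ on the directed edges out of $z$ by the usual alternating rule using $\sigma_r$, $\sigma_c$ and the sign of each label. This construction is purely local and $\Z_v$-equivariant, so it transfers with no change; compatibility of $\omega_r$ and $\omega_c$, i.e.\ that $\omega_c\circ\omega_r$ is a single $|\E(A)|$-cycle, is exactly what makes each $\rho_z$ a single cyclic permutation of the $v-t$ directed edges leaving $z$, hence a genuine local rotation. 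By Theorem~\ref{thm:graph_rot_eq_embed}, $\rho$ determines a cellular embedding of $\G$ in an orientable surface whose faces are the orbits of $\rho\circ\tau$.

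It then remains to identify those faces. Following $\rho\circ\tau$ from an edge coming from a row $R_i$, the alternating rule traces out the base path $P=(0,s_1,\dots,s_h)$ of $\D_{\omega_r^{-1}}$ attached to $R_i$ --- the \emph{reversed} orderings $\omega_{R_i}^{-1}$ entering because of the direction forced by $\tau$ --- then its translate $P+s_h$, then $P+2s_h$, and so on; since $s_h$ is the (non-zero) row sum, the closed walk returns to its start after precisely $\lambda_P$ copies, $\lambda_P$ being the order of $s_h$ in $\Z_v$, so this face boundary is the circuit $C_P$. Simplicity of $\omega_{R_i}^{-1}$ makes $P$ a genuine path, hence $C_P$ a genuine circuit, whose edges lie in $\G$ because they are translates of edges of $P$, whose differences are $\pm$ entries of $A$ and so avoid $J$. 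Dually, a face meeting a column edge is a circuit $C_{P'}$ with $P'\in\D_{\omega_c}$. Since $\omega_c\circ\omega_r$ is a single cycle, every directed edge of $\G$ is visited, so every face is of exactly one of the two types and the two families are disjoint; colouring the row-type faces with one colour and the column-type faces with the other realizes the embedding as a biembedding of $\C(\D_{\omega_r^{-1}})$ and $\C(\D_{\omega_c})$.

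The passage from $K_{2nk+1}$ to $\G$ --- the identification $\G\cong K_{\frac{2nk+t}{t}\times t}$ and the survival of all of Archdeacon's local definitions after excising $J$ --- is routine and requires no new idea. The main work, and the step I expect to be most delicate, is the bookkeeping of the third paragraph: verifying that the orbit of $\rho\circ\tau$ through a row edge is exactly the edge set of $C_P$ (i.e.\ the base path develops $\lambda_P$ times and closes neither sooner nor later), the dual statement for columns, and the mutual exclusivity and exhaustiveness of the two face families; together with keeping track of the sign/direction convention that forces $\D_{\omega_r^{-1}}$ (and not $\D_{\omega_r}$) to appear. All of this is inherited, essentially unchanged, from the proof of Proposition~\ref{prop:nzsh_biemb}.
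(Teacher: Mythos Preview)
Your proposal is correct and follows essentially the same approach as the paper: both identify $K_{\frac{2nk+t}{t}\times t}$ with the Cayley graph $\Cay(\Z_{2nk+t},\Z_{2nk+t}\setminus J)$, build the rotation from the compatible orderings via the same sign-alternating rule (the paper writes it as $\bar\rho_0(a)=-\omega_r(a)$ for $a\in\E(A)$ and $\bar\rho_0(a)=\omega_c(-a)$ for $a\in-\E(A)$), and then trace the orbits of $\rho\circ\tau$ to recover the circuits of $\C(\D_{\omega_c})$ and $\C(\D_{\omega_r^{-1}})$ as the two face classes. The only cosmetic difference is that the paper writes out the face boundaries $F_1,F_2$ and the corresponding $\lambda_c,\lambda_r$ explicitly, whereas you defer this bookkeeping to the proof of Proposition~\ref{prop:nzsh_biemb}.
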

\begin{proof}
Let $A$ be an $\N\H_t(m,n;h,k)$ admitting two compatible  orderings $\omega_r$ and $\omega_c$, thus $\omega_c \circ \omega_r$ is a cycle of length $|\E(A)|$. Let $\bar{\rho}_0$ be a permutation on
$\pm \E(A) = \Z_{2nk+t}\setminus \frac{2nk+t}{t}\Z_{2nk+t}$ defined by:
\[
\bar{\rho}_0 = \left\{ \begin{aligned}
&-\omega_r(a)\ \text{if $a \in \E(A)$;}\\
&\omega_c(-a)\ \text{if $a \in -\E(A)$;}\\
\end{aligned} \right.
\]
which, as done in Theorem 6.4 of \cite{CDFP}, can be proven to act cyclically on $\pm \E(A)$.

Identify now the complete multipartite graph $K_{\frac{2nk+t}{t} \times t}$ as the Cayley graph $\Cay[\Z_{2nk+t}: \Z_{2nk+t}\setminus \frac{2nk+t}{t}\Z_{2nk+t}]$, that is $\Cay[\Z_{2nk+t}: \pm \E(A)]$.

Define the map $\rho$ on the oriented edges of $K_{\frac{2nk+t}{t} \times t}$ as:
\[
\rho((x,x+a)) = (x,x+\bar{\rho}_0(a)).
\]
It can then be seen that $\rho$ is a rotation of $K_{\frac{2nk+t}{t} \times t}$, since $\bar{\rho}_0$ acts cyclically on $\pm \E(A)$. By Theorem \ref{thm:graph_rot_eq_embed} there exists a cellular embedding of $K_{\frac{2nk+t}{t} \times t}$ in an orientable surface; in particular, its face boundaries correspond to the orbits of $\rho \circ \tau $, where $\tau((x,x+a)) = (x+a,x)$.

Consider now the oriented edge $(x,x+a)$ with $a \in \E(A)$ and $x \in \Z_{2nk+t}$, and let ${C}$ be the column of $A$ containing $a$. Let $\lambda_c$ be the minimum positive integer such that $\sum_{i=0}^{\lambda_c|\E({C})|-1} \omega_c^i(a)=0$ in $\Z_{2nk+t}$. Similarly to Theorem 6.4 of \cite{CDFP}  $(x,x+a)$ belongs to the face $F_1$ whose boundary is:
\[
\left(x,x+a,x+a+\omega_c(a),\ldots,x+\sum_{i=0}^{\lambda_c|\E({C})|-2} \omega_c^i(a)\right).
\]
Moreover, notice that $\lambda_c$ is the minimum positive integer such that $\lambda_c \sum_{a_{i,j}\in C} a_{i,j} = 0$ in $\mathbb{Z}_{2nk+t}$ and $F_1$ has lenght $k\lambda_c$. We can also conclude that $F_1$  covers exactly $\lambda_c$ edges of type $(y,y+a)$ for some $y\in \Z_{2nk+t}$. Since the total number of such edges is $2nk+t$, the column $C$ induces $(2nk+t)/\lambda_c$ different faces.

Let now $a\not \in \E(A)$ and ${R}$ be the row containing $-a$. Consider now the oriented edge $(x,x+a)$ and denote by $\lambda_r$ the minimum positive integer such that $\sum_{i=0}^{\lambda_r|\E({R})|-1} \omega_r^i(-a)=0$ in $\Z_{2nk+t}$. Then, $(x,x+a)$ belongs to the face $F_2$ whose boundary is:
\[
\left(x,x+\sum_{i=1}^{\lambda_r|\E({R})|-1}\omega_{r}^{-i}(-a),x+\sum_{i=1}^{\lambda_r|\E({R})|-2}\omega_{r}^{-i}(-a),
\dots,x+\omega_{r}^{-1}(-a)\right).
\]
Similarly to the previous case, we notice that $\lambda_r$ is the minimum positive integer such that 
$\lambda_r \sum_{a_{i,j}\in R} a_{i,j} = 0$ in $\mathbb{Z}_{2nk+t}$ and $F_2$ has lenght $h\lambda_r$. Moreover, we have that the row $R$ induces $(2nk+t)/\lambda_r$ different faces.

In particular we proved that any non oriented edge $\{x,x+a\}$ of $K_{\frac{2nk+t}{t} \times t}$ belongs to the boundaries of exactly two faces: one of type $F_1$ and one of type $F_2$, thus the embedding is $2$-colorable. Finally note that those face boundaries are circuits of the decompositions $\C(\D_{\omega_r^{-1}})$ and $\C(\D_{\omega_c})$.
\end{proof}

All the non-zero sum Heffter arrays constructed in this paper, for $n$ odd, admits compatible orderings in view of Theorem 3.3 of \cite{CDP}. Now, by following Proposition \ref{prop:GSNZH_biembed}, one can compute the length of each face of the biembeddings, using the total sums reported in Remarks \ref{rem:sum_2}, \ref{rem:sum_2n}, \ref{rem:sum_n2}, \ref{rem:sum_2n2} and \ref{rem:sum_t}. In the next propositions we consider some particular cases where the length of the faces have nice expressions.

We begin by considering the Archdeacon embedding obtained for $t=2$:
\begin{prop}\label{prop:2_length_faces}
Let $n\geq 1$ be an odd integer, then there exists a cellular biembedding of $K_{(n^2+1) \times 2}$ such that every face has length
 $4n$ or a multiple of $n\frac{n^2+1}{2}$.
\end{prop}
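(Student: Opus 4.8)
The plan is to instantiate Proposition~\ref{prop:GSNZH_biembed} with the globally simple $\N\H_2(n;n)$ constructed in Proposition~\ref{prop:2}, and then to read off the possible face lengths from the total row and column sums recorded in Remark~\ref{rem:sum_2}. First I would note that, since $n$ is odd and the array of Proposition~\ref{prop:2} is globally simple, Theorem~3.3 of \cite{CDP} guarantees that the natural orderings $\omega_r$ and $\omega_c$ are compatible; hence Proposition~\ref{prop:GSNZH_biembed} applies with $m=n$, $h=k=n$ and $t=2$, producing a cellular biembedding of $K_{\frac{2n^2+2}{2}\times 2} = K_{(n^2+1)\times 2}$ into an orientable surface whose faces are the circuits of $\C(\D_{\omega_r^{-1}})$ and $\C(\D_{\omega_c})$.

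Next I would recall, from the proof of Proposition~\ref{prop:GSNZH_biembed}, that each face coming from a column $C_j$ has length $n\lambda_c$, where $\lambda_c$ is the least positive integer with $\lambda_c \sum_{a_{i,j}\in C_j} a_{i,j}\equiv 0 \pmod{2n^2+2}$, and likewise each face coming from a row $R_i$ has length $n\lambda_r$ with $\lambda_r$ the analogous multiplier for the row sum. So the computation reduces to determining the order of the row and column sums in $\Z_{2n^2+2}$. By Remark~\ref{rem:sum_2}, for $n$ odd the column sum is $(-1)^{j+1}\left(j+\frac{n-1}{2}n\right)$ and the row sum is $(-1)^{i+1}\left((i-1)n+\frac{n+1}{2}\right)$. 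The key number-theoretic step is to show that for each such value $s$, the order of $s$ in $\Z_{2n^2+2}$ is either $4$ (giving face length $4n$) or $\frac{2n^2+2}{\gcd(s,2n^2+2)}$ which, in the remaining cases, will be shown to be a multiple of $\frac{n^2+1}{2}\cdot\frac{2}{\gcd(\cdots)}$, ultimately a value forcing the face length $n\lambda$ to be a multiple of $n\frac{n^2+1}{2}$.

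Concretely, I would argue as follows. Write $v=2n^2+2 = 2(n^2+1)$. For a value $s$ with $2\mid s$, say $s=2s'$, the order of $s$ in $\Z_v$ equals the order of $s'$ in $\Z_{n^2+1}$; one checks the row sum $(i-1)n+\frac{n+1}{2}$ is congruent to an odd number only when... — here I would instead split on the parity of the sum directly. When the sum $s$ is even, $\lambda = (n^2+1)/\gcd(s/2,n^2+1)$, which divides $n^2+1$; if this $\gcd$ equals $n^2+1$ then $\lambda=1$ and the face has length $n$, but one must verify this degenerate case does not occur, and otherwise $\lambda$ is a proper divisor — I would then show $\lambda\in\{1,2\}$ forces a contradiction with the explicit size of $s$, so $\lambda$ is a multiple of a prime dividing $n^2+1$, and combined with the factor of $2$ lost, $n\lambda$ is a multiple of $n\frac{n^2+1}{2}$ unless $s$ is one of the small exceptional values $\pm(n^2+1)/2$ or $\pm(n^2+1)$, which give order $4$ or $2$ and hence length $4n$ or $2n$; finally one checks whether $2n$ actually arises. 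When $s$ is odd, $\gcd(s,v)=\gcd(s,v/2)\cdot\gcd(2,\cdots)$...

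I expect the main obstacle to be exactly this last bookkeeping: showing that the greatest common divisors $\gcd\!\left(j+\tfrac{n-1}{2}n,\, 2n^2+2\right)$ and $\gcd\!\left((i-1)n+\tfrac{n+1}{2},\, 2n^2+2\right)$ are, as $i,j$ range over $[1,n]$, always small enough that $\tfrac{v}{\gcd}$ is a multiple of $\tfrac{n^2+1}{2}$ except in the handful of cases producing order $4$. A clean way to handle this is to observe that $2s$ (for the column, $2j + (n-1)n = 2j + n^2 - n$) satisfies $2s \equiv 2j - n - 2 \pmod{n^2+1}$ after reducing $n^2\equiv -1$, so $s' = s \bmod (n^2+1)$ ranges over a short arithmetic progression of length $n$ inside $\Z_{n^2+1}$; since any common divisor of $s'$ and $n^2+1$ would also divide the spacing, and $n^2+1$ has no divisor in $(1,n]$ other than possibly $2$ and small factors... — the delicate point is that $n^2+1$ can be highly composite, so I would phrase the conclusion not as ``$\lambda$ is large'' but as ``$n\lambda$ is $4n$ or a multiple of $n\frac{n^2+1}{2}$'', proving the dichotomy by showing that whenever $\lambda < \frac{n^2+1}{2}$ we must have $\gcd(s,v) > 2$ and tracing through the few options, each of which lands on a value $\pm\frac{v}{4}$ or $\pm\frac{v}{2}$. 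I would close by noting that the $4n$ faces genuinely occur (e.g.\ from the middle column/row when $n\equiv 1\pmod 4$), so both cases of the dichotomy are non-vacuous, completing the proof.
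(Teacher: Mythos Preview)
Your overall strategy is exactly the paper's: take the globally simple $\N\H_2(n;n)$ of Proposition~\ref{prop:2}, use that $n$ is odd to get compatible natural orderings, apply Proposition~\ref{prop:GSNZH_biembed}, and read off each face length as $n$ times the order in $\Z_{2n^2+2}$ of the corresponding row or column sum from Remark~\ref{rem:sum_2}. Where you hesitate and start a case analysis on $\gcd(s,2n^2+2)$, the paper simply asserts that for $j\neq\frac{n+1}{2}$ one has $\gcd\!\bigl(j+\tfrac{n-1}{2}n,\;n^2+1\bigr)\in\{1,2\}$ (and the analogous statement for rows), which would force the order to be a multiple of $\frac{n^2+1}{2}$.

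The genuine gap is the one you sensed: that gcd assertion is \emph{false} in general, so your case analysis cannot be completed and the paper's one-line claim is incorrect. Take $n=13$, so $n^2+1=170=2\cdot5\cdot17$ and $v=340$. For $j=2$ the column sum is $-(2+78)=-80$, and $\gcd(80,170)=10$, not $1$ or $2$; the order of $-80$ in $\Z_{340}$ is $340/\gcd(80,340)=340/20=17$, giving face length $13\cdot17=221$. This is neither $4n=52$ nor a multiple of $n\frac{n^2+1}{2}=1105$. (The row $i=2$ has sum $-20$ and produces the same face length.) Your remark that ``$n^2+1$ can be highly composite'' is precisely the obstruction: any odd divisor $d>1$ of $n^2+1$ with $d\le n-1$ will be picked up by some $s_j$, since $d\mid s_j$ forces $d\mid 2j-n-1$ and $|2j-n-1|\le n-1$. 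The statement therefore needs an extra hypothesis (for example, $\frac{n^2+1}{2}$ prime) or a weaker conclusion on the face lengths; as written, neither your argument nor the paper's can be made to work.
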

\begin{proof}
Let $n$ be an odd positive integer and $A$ be the $\N\H_2(n;n)$ constructed in Proposition \ref{prop:2}; it will be enough to prove that the total sum of every row and of every column has order $4$ or a multiple of $\frac{n^2+1}{2}$ modulo $2(n^2+1)$.
By Remark \ref{rem:sum_2}, for every $j \in [1,n]$:
\[
\sum_{a_{i,j} \in C_j} a_{i,j}  =(-1)^{j+1} \left( j+  \frac{n-1}{2}\,n  \right).
\]
Note that for $j = \frac{n+1}{2}$ the previous expression yields $\pm \frac{n^2+1}{2}$, obtaining order $4$ modulo $2(n^2+1)$. For $j \neq \frac{n+1}{2}$ it can be seen that the greatest common divisor between the total sum and  $n^2+1$ is either $1$ or $2$, obtaining that the order is a multiple of $\frac{n^2+1}{2}$.

Now, again by Remark \ref{rem:sum_2}, for every $i \in [1,n]$:
\[
\sum_{a_{i,j} \in R_i} a_{i,j} =(-1)^{i+1} \left( (i-1)n  + \frac{n+1}{2} \right).
\]
Similarly to the previous case, for $i= \frac{n+1}{2}$ we obtain order $4$, while for $i \neq \frac{n+1}{2}$ the order is a multiple of $\frac{n^2+1}{2}$.
\end{proof}

\begin{prop}\label{prop:2n_length_faces}
Let $n$ be an odd prime, then there exists a cellular biembedding of $K_{(n+1)\times 2n}$ such that every face has length $4n$  or a multiple of $n^2$.
\end{prop}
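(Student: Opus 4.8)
The plan is to apply Proposition~\ref{prop:GSNZH_biembed} to the array $A$ produced by Proposition~\ref{prop:2n}. First I would record the bookkeeping: for an $\N\H_{2n}(n;n)$ one has $v=2n^2+2n=2n(n+1)$, hence $t=2n$ and $\frac{v}{t}=n+1$, so the ambient graph is $K_{(n+1)\times 2n}$. Since $A$ is globally simple it is in particular simple, and since $n$ is odd it admits compatible natural orderings by Theorem~3.3 of \cite{CDP} (as recalled after Proposition~\ref{prop:GSNZH_biembed}). Thus Proposition~\ref{prop:GSNZH_biembed} yields a cellular biembedding of $K_{(n+1)\times 2n}$ in an orientable surface, whose faces are exactly the circuits of $\C(\D_{\omega_r^{-1}})$ and $\C(\D_{\omega_c})$.

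Next I would use the face-length information inside the proof of Proposition~\ref{prop:GSNZH_biembed}: a row $R_i$ contributes faces of length $n\lambda_{R_i}$, where $\lambda_{R_i}$ is the order of the total row sum $\sum_{a_{i,j}\in R_i}a_{i,j}$ in $\Z_v$, and symmetrically a column $C_j$ contributes faces of length $n\lambda_{C_j}$ with $\lambda_{C_j}$ the order of $\sum_{a_{i,j}\in C_j}a_{i,j}$. So it suffices to prove that each of these orders is either $4$ (giving face length $4n$) or a multiple of $n$ (giving face length a multiple of $n^2$). Here I would exploit that $n$ is an odd prime: since $n\nmid 2(n+1)$, the prime $n$ divides $v=n\cdot 2(n+1)$ exactly once, so for any nonzero $\sigma\in\Z_v$ with $n\nmid\sigma$ the order $v/\gcd(v,\sigma)$ is automatically a multiple of $n$.

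For the columns I would substitute the compact formula of Remark~\ref{rem:sum_2n}, writing $\sigma_{C_j}=\frac{n+1}{2}\,w_j$ with $w_j=n^2+(-1)^j(n+1-2j)$, noting that $w_j$ is odd. From $v=\frac{n+1}{2}\cdot 4n$ one gets $\gcd(v,\sigma_{C_j})=\frac{n+1}{2}\gcd(4n,w_j)=\frac{n+1}{2}\gcd(n,w_j)$, which, $n$ being prime, equals $\frac{n+1}{2}$ unless $n\mid w_j$; and $n\mid w_j$ iff $n\mid 2j-1$ iff $j=\frac{n+1}{2}$, in which case $w_j=n^2$ and $\gcd(v,\sigma_{C_j})=\frac{n(n+1)}{2}$. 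Hence $\lambda_{C_j}=4n$ for $j\neq\frac{n+1}{2}$ and $\lambda_{C_j}=4$ for $j=\frac{n+1}{2}$, as required. For the rows I would reduce the formula of Remark~\ref{rem:sum_2n} modulo $n$, treating $i$ even and odd separately, to see that $n\mid\sigma_{R_i}$ if and only if $i=\frac{n+1}{2}$; for all other $i$ the order is a multiple of $n$ by the divisibility remark above. For $i=\frac{n+1}{2}$ a direct evaluation gives $\sigma_{R_i}=-\frac{n(n+1)(n-2)}{2}$ when $n\equiv 1\pmod 4$ (using $1-(n-1)^2=n(2-n)$) and $\sigma_{R_i}=\frac{n^2(n+1)}{2}$ when $n\equiv 3\pmod 4$; since $n$ is odd, in both cases $\gcd(v,\sigma_{R_i})=\frac{n(n+1)}{2}$, so $\lambda_{R_i}=4$ and the corresponding faces have length $4n$. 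Combining these computations, every face has length $4n$ or a multiple of $n^2$. The main obstacle is precisely this row analysis: unlike $\sigma_{C_j}$, which is transparently $\frac{n+1}{2}$ times an odd integer, the central row sum takes two different shapes according to $n\bmod 4$, and one must carefully verify $\gcd(v,\sigma_{R_{(n+1)/2}})=\frac{n(n+1)}{2}$ in each case.
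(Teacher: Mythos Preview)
Your proposal is correct and follows essentially the same route as the paper: apply Proposition~\ref{prop:GSNZH_biembed} to the globally simple $\N\H_{2n}(n;n)$ of Proposition~\ref{prop:2n}, then read off the face lengths as $n$ times the orders of the row and column sums given in Remark~\ref{rem:sum_2n}, checking that the exceptional index is $j=i=\frac{n+1}{2}$. Your gcd bookkeeping for the columns (writing $\sigma_{C_j}=\frac{n+1}{2}w_j$ with $w_j$ odd) and your direct integer evaluation of $\sigma_{R_{(n+1)/2}}$ split by $n\bmod 4$ are slightly more explicit than the paper's argument, but the substance is the same.
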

\begin{proof}
Let $A$ be the $\N\H_{2n}(n;n)$ described in Proposition \ref{prop:2n}. By Remark \ref{rem:sum_2n}, we have that for every $j \in [1,n]$:
\[
\sum_{a_{i,j} \in C_j} a_{i,j} =(n+1)\left(\frac{n^2}{2} +(-1)^j\,\frac{n+1-2j}{2}\right).
\]
As $n$ is prime and $j \in [1,n]$, it can be seen that for $j \neq \frac{n+1}{2}$ the rightmost bracket is coprime with $n$; hence, the order in $\Z_{2n(n+1)}$ is $4n$, so we obtain an embedding where the length of the corresponding faces is $4n^2$. For $j = \frac{n+1}{2}$, the previous expression reads $n^2(n+1)/2$, obtaining order $4$, thus the length of the corresponding faces is $4n$.

We will now prove that for every row of $A$ the order of its total sum in $\Z_{2n^2+2n}$ is either $4$ or a multiple of $n$. 
For every odd $i \in [1,n]$:
\[
\sum_{a_{i,j} \in R_i} a_{i,j} = \frac{-n^3+n^2+n-1+2i}{2}.
\]
Firstly, since $n^2+2n = -n^2$ in $\Z_{2n^2+2n}$, for $i = \frac{n+1}{2}$ the previous expression reads $\frac{-n^3-n^2}{2} = \frac{-n^2(n+1)}{2}$, thus obtaining order $4$ and corresponding length of the faces $4n$. For $i \neq \frac{n+1}{2}$  the total sum is not a multiple of $n$, as $\frac{-n^3+n^2}{2}$ is a multiple of $n$, while $\frac{n-1+2i}{2}$ is not. Thus, for $i \neq \frac{n+1}{2}$, the order of the total sum is a multiple of $n$.

Finally, for every even $i \in [1,n]$:
\[
\sum_{a_{i,j} \in R_i} a_{i,j} = \frac{n^3+n^2+n+1-2i}{2}.
\]
As for the previous case, it can be seen that the order of the total sum of the $i$-th row of $A$  is a multiple of $n$ for $i \neq \frac{n+1}{2}$, and it is $4$ for $i = \frac{n+1}{2}$.
\end{proof}

\begin{prop}\label{prop:n2_length_faces}
Let $n$ be an odd prime, then there exists a cellular biembedding of $K_{3\times n^2}$ such that every face has length $3n^2$ or $3n^3$.
\end{prop}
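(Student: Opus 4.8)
The plan is to apply Proposition~\ref{prop:GSNZH_biembed} to the globally simple $\N\H_{n^2}(n;n)$ array $A$ with entries in $\Z_{3n^2}$ produced by Proposition~\ref{prop:n2}. Since $n$ is odd, Theorem~3.3 of \cite{CDP} guarantees that $A$ admits two compatible natural orderings $\omega_r,\omega_c$, so Proposition~\ref{prop:GSNZH_biembed} (with $m=n=h=k$ and $t=n^2$, which is admissible) yields a cellular biembedding of $K_{\frac{2nk+t}{t}\times t}=K_{3\times n^2}$ into an orientable surface whose faces are the circuits of $\C(\D_{\omega_r^{-1}})$ and $\C(\D_{\omega_c})$. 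From the analysis inside the proof of that proposition, each column $C_j$ gives $\frac{3n^2}{\lambda_{c,j}}$ faces, all of length $n\lambda_{c,j}$, where $\lambda_{c,j}$ is the order in $\Z_{3n^2}$ of the column sum $\sigma_j=\sum_{a_{i,j}\in C_j}a_{i,j}$; symmetrically each row $R_i$ gives faces of length $n\lambda_{r,i}$ with $\lambda_{r,i}$ the order of the row sum $\rho_i$. Hence it is enough to prove that every $\lambda_{c,j}$ and every $\lambda_{r,i}$ equals $3n$ or $3n^2$.

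For the columns I would use the value $\sigma_j=(-1)^{j-1}\bigl(3nj-\tfrac{3n+1}{2}\bigr)$ from Remark~\ref{rem:sum_n2} and show that $\sigma_j$ is coprime to $3n^2$. Reducing modulo $3$, since $2\cdot\tfrac{3n+1}{2}=3n+1\equiv1$ one gets $\tfrac{3n+1}{2}\equiv2\pmod3$, whence $\sigma_j\equiv\pm1\pmod3$. Reducing modulo $n$, similarly $\tfrac{3n+1}{2}\equiv2^{-1}\equiv\tfrac{n+1}{2}\pmod n$, so $\sigma_j\equiv\mp\tfrac{n+1}{2}\pmod n$, which is nonzero because $1\le\tfrac{n+1}{2}\le n-1$ and $n$ is prime. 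As $3$ and $n$ are the only primes dividing $3n^2$, this gives $\gcd(\sigma_j,3n^2)=1$, i.e. $\lambda_{c,j}=3n^2$, so every column face has length $3n^3$.

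For the rows I would split according to $n\bmod 4$ and use the corresponding formula for $\rho_i$ from Remark~\ref{rem:sum_n2}, writing $\rho_i=\pm m_i$ (the sign does not affect the order). In both cases, since $3n\tfrac{n-1}{2}$ is a multiple of $3$, one gets $m_i\equiv1\pmod 3$, so $3\nmid m_i$ and $\gcd(m_i,3n^2)=\gcd(m_i,n^2)$. Reducing modulo $n$ gives $m_i\equiv 3i-2\pmod n$ when $n\equiv1\pmod4$ and $m_i\equiv 1-3i\pmod n$ when $n\equiv3\pmod4$; for $n\neq3$, since $\gcd(3,n)=1$ there is a unique $i_0\in[1,n]$ with $n\mid m_{i_0}$, and $n\nmid m_i$ for the remaining indices. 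For $i\ne i_0$ this yields $\lambda_{r,i}=3n^2$ and face length $3n^3$, while for $i=i_0$ it remains to exclude $n^2\mid m_{i_0}$. Here I would use the crude bounds $0<m_i<2n^2$ (immediate from $i\in[1,n]$), which reduce the question to $m_{i_0}=n^2$; but this equation rearranges to $3i-2=\tfrac{n(3-n)}{2}\le0$ (resp. $3(n+1-i)-2=\tfrac{n(3-n)}{2}\le0$) for $n\ge3$, which is impossible since the left side is at least $1$. Hence $\lambda_{r,i_0}=3n$ and the corresponding faces have length $3n^2$. When $n=3$ there is no such $i_0$ (because $3\mid3i$ forces the congruences above to fail), so all faces then have length $3n^3$; in every case each face has length $3n^2$ or $3n^3$, as claimed.

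The step I expect to be the main obstacle is the row case: one has to carry the two parity subcases of $n\bmod4$ in parallel, be careful with the sign in $\rho_i$ and with the inverse of $2$ modulo $3$ and modulo $n$, and --- most delicately --- treat the boundary value $n=3$, where the prime $3$ coincides with $n$ and the ``exceptional'' row with face length $3n^2$ simply disappears; one must verify that the conclusion still holds (it does, all faces then having length $3n^3$). The estimate ruling out $n^2\mid m_{i_0}$ is elementary but needs the explicit rearrangement to $\tfrac{n(3-n)}{2}\le0$.
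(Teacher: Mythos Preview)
Your argument is correct and follows essentially the same route as the paper: apply Proposition~\ref{prop:GSNZH_biembed} to the array of Proposition~\ref{prop:n2} and compute the orders of the row and column sums from Remark~\ref{rem:sum_n2} in $\Z_{3n^2}$. In fact your treatment is slightly more careful than the paper's, since you explicitly rule out $n^2\mid m_{i_0}$ (so that the exceptional row has order exactly $3n$, not merely a divisor of $3n$) and you handle the boundary case $n=3$ separately, both of which the paper glosses over.
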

\begin{proof}
Let $A$ be the $\N\H_{n^2}(n;n)$ described in Proposition \ref{prop:n2}; we follow Remark \ref{rem:sum_n2} where the total sums of rows and columns are described. In particular, for every $j \in [1,n]$:
\[
\sum_{a_{i,j} \in C_j} a_{i,j} =(-1)^{j-1} \left( 3nj-\frac{3n+1}{2} \right).
\]
Now, since $\frac{3n+1}{2}$ is neither a multiple of $n$, nor a multiple of $3$, we obtain that $\sum_{a_{i,j} \in C_j} a_{i,j}$ and $3n^2$ are coprime, thus every  face constructed from the columns of $A$ has length $3n^3$.

If $n \equiv 1 \pmod{4}$, for every $i \in [1,n]$:
\[
\sum_{a_{i,j} \in R_i} a_{i,j} = (-1)^{i-1} \left(3i - 2 +3n \frac{n-1}{2}\right).
\]
It can then be seen that this term is  coprime with $3n^2$ except when $3i$ is equal to $n+2$ (that implies $n \equiv 1 \pmod{3}$)
or $2n+2$ (that may hold if $n \equiv 2 \pmod{3}$). Let $\bar{i}$ denote such an index, note that $3n \sum_{a_{\bar{i},j} \in R_{\bar{i}}} a_{\bar{i},j} = 0 \pmod{3n^2}$.

For every  $n \equiv 3 \pmod{4}$ and $i \in [1,n]$:
\[
\sum_{a_{i,j} \in R_i} a_{i,j} = (-1)^{i-1} \left(3(n+1-i) - 2 +3n \frac{n-1}{2}\right).
\]
Similarly, this term is coprime with $3n^2$  except when $3i$ is  equal to $n+1$ (that implies $n \equiv 2 \pmod{3}$)
or $2n+1$ (that may hold if $n \equiv 1 \pmod{3}$). Let $\bar{i}$ be such an index, then
$3n \sum_{a_{\bar{i},j} \in R_{\bar{i}}} a_{\bar{i},j} = 0 \pmod{3n^2}$.

In any case, we have proven that the length of the faces obtained by the $\bar{i}$-th row is $3n^2$, while every other face has length $3n^3$.
\end{proof}

\begin{prop}\label{prop:2n2_length_faces}
Let $n$ be an odd prime, then there exists a cellular biembedding of $K_{2\times 2n^2}$ such that every face has length $4n$, $4n^2$ or $4n^3$.
\end{prop}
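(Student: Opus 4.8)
The plan is to feed the array $A=\N\H_{2n^2}(n;n)$ built in Proposition~\ref{prop:2n2} into the biembedding machinery of Section~5. Since $n$ is odd, Theorem~3.3 of \cite{CDP} guarantees that the globally simple array $A$ admits compatible natural orderings, so Proposition~\ref{prop:GSNZH_biembed}, applied with $t=2n^2$ so that $v=2n^2+t=4n^2$ and $\frac{2nk+t}{t}=2$, produces a cellular biembedding of $K_{2\times 2n^2}$ into an orientable surface. Recalling the description of the faces in the proof of Proposition~\ref{prop:GSNZH_biembed}, the face determined by the $j$-th column has length $n\lambda_{C_j}$, where $\lambda_{C_j}$ is the order of $\sum_{a_{i,j}\in C_j}a_{i,j}$ in $\Z_{4n^2}$, and the face determined by the $i$-th row has length $n\lambda_{R_i}$, where $\lambda_{R_i}$ is the order of $\sum_{a_{i,j}\in R_i}a_{i,j}$ in $\Z_{4n^2}$. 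Thus it is enough to prove that every one of these orders belongs to $\{4,4n,4n^2\}$.

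For the columns I would use Remark~\ref{rem:sum_2n2} and factor the sum as $\sum_{a_{i,j}\in C_j}a_{i,j}=n\bigl(n^2+(-1)^{j-1}(2j-n-1)\bigr)$. For $j=\frac{n+1}{2}$ this is $n^3$, which has order $4$ modulo $4n^2$ because $n$ is odd, giving a face of length $4n$. For $j\neq\frac{n+1}{2}$ the integer $2j-n-1$ is even, nonzero, and bounded in absolute value by $n-1$; since $n$ is prime it is coprime with $n$, so $M_j:=n^2+(-1)^{j-1}(2j-n-1)$ is odd and coprime with $n$, whence $\gcd(nM_j,4n^2)=n$ and the order of $nM_j$ is $4n$. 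These columns therefore contribute faces of length $4n^2$.

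For the rows the argument is the same in spirit, but one has to split according to $n\bmod 4$, using the two formulas of Remark~\ref{rem:sum_2n2}; in either case $\sum_{a_{i,j}\in R_i}a_{i,j}=n^3+(-1)^{i-1}\varepsilon_i$ with $\varepsilon_i$ equal to $2i-n-1$ or to $n+1-2i$, an even integer with $|\varepsilon_i|\le n-1$. When $i=\frac{n+1}{2}$ the sum is $n^3$, of order $4$, so the face has length $4n$. When $i\neq\frac{n+1}{2}$ the sum is odd and, since $n$ is prime and $|\varepsilon_i|<n$, it is not divisible by $n$, exactly the kind of estimate used in the proof of Proposition~\ref{prop:n2_length_faces}; hence it is coprime with $4n^2$ and has order $4n^2$, producing faces of length $4n^3$. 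Combining the two analyses, every face of the biembedding has length $4n$, $4n^2$ or $4n^3$, as claimed.

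The only point that requires care is the computation of the orders, that is, of $\gcd$ of the various total sums with $4n^2$: this is where the primality of $n$ enters, to exclude $n\mid\varepsilon_i$ and $n\mid(2j-n-1)$, together with the bounds $|2i-n-1|,|2j-n-1|\le n-1$ and a parity observation. Everything else is a mechanical application of Proposition~\ref{prop:GSNZH_biembed} and Remark~\ref{rem:sum_2n2}.
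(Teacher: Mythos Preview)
Your proposal is correct and follows essentially the same approach as the paper: both start from the array of Proposition~\ref{prop:2n2}, invoke compatibility of the natural orderings for $n$ odd, apply Proposition~\ref{prop:GSNZH_biembed}, and then compute the orders of the row and column sums in $\Z_{4n^2}$ using the formulas of Remark~\ref{rem:sum_2n2}. Your explicit factorization of the column sum as $n\bigl(n^2+(-1)^{j-1}(2j-n-1)\bigr)$ makes the $\gcd$ computation slightly more transparent than in the paper, but the argument is the same.
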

\begin{proof}
Let $A$ be the $\N\H_{2n^2}(n;n)$ described in Proposition \ref{prop:2n2}; we recall that the total sum of the rows and the columns of $A$ is given in  Remark \ref{rem:sum_2n2}. For every $j \in [1,n]$ we have:
\[
\sum_{a_{i,j} \in C_j} a_{i,j} =n^3 + (-1)^{j-1} (2nj-n^2-n).
\]
It can be seen that  for  $j \neq \frac{n+1}{2}$, the order of the sum in $\Z_{4n^2}$ is $4n$. For $j =  \frac{n+1}{2}$ the previous expression yields $n^3$, which has order $4$ in $\Z_{4n^2}$. Thus the corresponding faces have length $4n^2$ or $4n$.

For every $n \equiv 1 \pmod{4}$ and $i \in [1,n]$ we have:
\[
\sum_{a_{i,j} \in R_i} a_{i,j} =n^3 + (-1)^{i-1} (2i-n-1).
\]
It can be seen that for $i= \frac{n+1}{2}$ the previous expression yields $n^3$, thus the corresponding faces have length $4n$. For $i \in [1,n]$, $i \neq \frac{n+1}{2}$, the previous expression gives a number coprime with $4n^2$, obtaining that the faces constructed from these rows have length $4n^3$.

For every $n \equiv 3 \pmod{4}$ and  for every $i \in [1,n]$:
\[
\sum_{a_{i,j} \in R_i} a_{i,j} =n^3 + (-1)^{i-1} \left(n+1-2i\right).
\]
It is then an easy check to verify that this term is coprime with $4n^2$ for every $i \in [1,n]$, $i \neq  \frac{n+1}{2}$, obtaining faces with length $4n^3$, while for $i = \frac{n+1}{2}$ the faces have length $4n$.
\end{proof}

\begin{prop}\label{prop:n_length_faces}
Let $n \geq 1$ be an odd integer such that $2n+1$ is prime, then there exists a cellular biembedding of  $K_{(2n+1)\times n}$ such that the length of every face is a multiple of $n(2n+1)$.
\end{prop}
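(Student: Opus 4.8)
The plan is to feed the globally simple $\N\H_n(n;n)$ constructed in Theorem~\ref{thm:t} (in the case $t=n$, where $A=H$) into the biembedding machinery of Proposition~\ref{prop:GSNZH_biembed}, and then to read off the face lengths from the total row and column sums recorded in Remark~\ref{rem:sum_t}. First I would note that since $n$ is odd and $n\mid n$, Theorem~\ref{thm:t} yields a globally simple $\N\H_n(n;n)$, say $A$, with entries in $\Z_v$ where $v=2n^2+n=n(2n+1)$; being globally simple, $A$ is simple with respect to the natural orderings of its rows and columns, and since $n$ is odd these orderings can be taken compatible by Theorem~3.3 of \cite{CDP}. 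Applying Proposition~\ref{prop:GSNZH_biembed} with $m=h=k=t=n$ then gives a cellular biembedding of $K_{\frac{2n^2+n}{n}\times n}=K_{(2n+1)\times n}$ into an orientable surface, whose faces are the circuits of $\C(\D_{\omega_r^{-1}})$ and $\C(\D_{\omega_c})$.

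Next I would recall from the proof of Proposition~\ref{prop:GSNZH_biembed} that a face coming from the $j$-th column has length $n\lambda_c$, where $\lambda_c$ is the additive order in $\Z_v$ of the $j$-th column sum, and a face coming from the $i$-th row has length $n\lambda_r$, where $\lambda_r$ is the order of the $i$-th row sum. Thus the whole statement reduces to two order computations modulo $v=n(2n+1)$. For the rows this is the clean part: specializing Remark~\ref{rem:sum_t} to $\frac{n}{t}=1$ and $\frac{v}{t}=2n+1$, the $i$-th row sum equals $i+\frac{n-1}{2}(2n+1)$ for $i$ odd and $(2n+1)\frac{n+1}{2}-i$ for $i$ even; since $n$ is odd the quantities $\frac{n-1}{2}$ and $\frac{n+1}{2}$ are integers, so in both cases the row sum is $\equiv\pm i\pmod{2n+1}$, and $1\le i\le n<2n+1$ forces this residue to be nonzero. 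Here the hypothesis that $2n+1$ is prime enters: it gives $\gcd(\text{row sum},2n+1)=1$, hence $\gcd(\text{row sum},v)$ divides $n$ and $\lambda_r=v/\gcd(\text{row sum},v)$ is a multiple of $2n+1$, so the row faces have length $n\lambda_r$, a multiple of $n(2n+1)$. For the columns, Remark~\ref{rem:sum_t} with $\alpha=0$ and $\frac{v}{t}=2n+1$ gives, after a short simplification, $\sum_{a_{i,j}\in C_j}a_{i,j}=(-1)^{j+1}n\bigl(j(2n+1)-(n+1)\bigr)$; since $j(2n+1)-(n+1)\equiv-(n+1)\pmod{2n+1}$ and $\gcd(n+1,2n+1)=1$, its gcd with $v$ is exactly $n$, so $\lambda_c=2n+1$ and these faces have length precisely $n(2n+1)$.

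The step needing a little care — the one I would flag as the main obstacle — is the row computation: reducing the two row-sum formulas modulo $2n+1$ and then using primality to pass from ``nonzero residue'' to ``order divisible by $2n+1$''. The column side is essentially mechanical once the closed form is obtained; it is worth noting that the column bound $n(2n+1)$ holds for every odd $n$, and primality of $2n+1$ is needed only to prevent the row faces from collapsing (for instance $n=7$, $2n+1=15$, would give shorter faces for $i\in\{3,5\}$). With both order computations in hand, combining the two cases shows at once that every face of the biembedding has length a multiple of $n(2n+1)$, which completes the proof.
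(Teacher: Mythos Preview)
Your proof is correct and follows essentially the same route as the paper: apply Theorem~\ref{thm:t} with $t=n$ to obtain a globally simple $\N\H_n(n;n)$, invoke the compatibility of natural orderings for odd $n$ and Proposition~\ref{prop:GSNZH_biembed}, then compute the orders of the row and column sums from Remark~\ref{rem:sum_t} exactly as the paper does. Your extra observation that primality of $2n+1$ is needed only for the row faces (with the illustrative example $n=7$) is a nice touch not present in the paper.
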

\begin{proof}

Let $A$ be the $\N\H_n(n;n)$ obtained from Theorem \ref{thm:t}, we follow Remark \ref{rem:sum_t} where we reported the total sums of rows and columns of $A$, keeping in mind that $t = n$ which implies $\alpha  =0$. In particular, for every $j \in [1,n]$:
\[
\sum_{a_{i,j} \in C_{j}} a_{i,j} = (-1)^{ j+1}\left[  (j-1)(2n+1)+ 1 + \frac{n-1}{2} \left[(2j-1)(2n+1)+1\right] \right].
\]
Since we are in $\Z_{2n^2+n}$, the previous expression is equivalent to:
\[
\sum_{a_{i,j} \in C_{j}} a_{i,j} = (-1)^{ j+1} (-n^2-n) = (-1)^j n(n+1).
\]
We can then conclude by noticing that $n+1$ and $2n+1$ are coprime, thus obtaining that the total sum of the elements in every column has order $2n+1$ in $\Z_{2n^2+n}$ and that the corresponding faces of the Archdeacon embedding have length $n(2n+1)$.

Now, we have for every odd $i \in [1,n]$:
\[
\sum_{a_{i,j} \in R_i} a_{i,j}= i + \frac{n-1}{2} (2n+1).
\]
Since $2n+1$ is prime, it is immediate to see that this expression and $2n+1$ are coprime, thus the length of the corresponding faces is a multiple of $n(2n+1)$.
Similarly, for every even $i \in [1,n]$:
\[
\sum_{a_{i,j} \in R_i} a_{i,j}= -i  + \frac{n+1}{2} (2n+1).
\]
As before, it can be seen that this expression is coprime with $2n+1$, obtaining as length face multiples of $n(2n+1)$.
\end{proof}

\begin{rem}
Since all the non-zero sum Heffter arrays constructed in this paper have the additional property of being globally simple,
the faces of biembeddings constructed in Propositions \ref{prop:2_length_faces}, \ref{prop:2n_length_faces}, \ref{prop:n2_length_faces}, 
\ref{prop:2n2_length_faces}  and \ref{prop:n_length_faces}
are union of paths of length $n$.
\end{rem}

\section*{Acknowledgements}
The authors would like to thank Simone Costa for helpful discussions on this topic.
The authors were partially supported by INdAM-GNSAGA.

\end{document}